\newtheorem{lem}{Lemma}
\newtheorem{thm}{Theorem}
\newtheorem{defn}{Definition}
\newtheorem{oss}{Remark}
\newcommand{\uvec}{\boldsymbol{u}}
\newcommand{\hvec}{\boldsymbol{h}}
\newcommand{\nvec}{\boldsymbol{n}}
\newcommand{\wvec}{\boldsymbol{w}}
\newcommand{\zvec}{\boldsymbol{z}}
\newcommand{\rhot}{\widetilde{\rho}}
\newcommand{\Jvect}{\widetilde{\boldsymbol{J}}}
\newcommand{\e}{\epsilon}
\newcommand{\Fe}{F_{\epsilon}}
\newcommand{\Fie}{F_{1\epsilon}}
\newcommand{\phie}{\varphi_{\epsilon}}
\newcommand{\phid}{\varphi_{\delta}}
\newcommand{\ue}{\boldsymbol{u}_{\epsilon}}
\newcommand{\ud}{\boldsymbol{u}_{\delta}}
\newcommand{\mue}{\mu_{\epsilon}}
\newcommand{\mud}{\mu_{\delta}}
\numberwithin{equation}{section}
\begin{document}

\title{\bf Global existence of weak solutions for a nonlocal
model for two-phase flows of incompressible fluids with unmatched densities}

\author{
Sergio Frigeri\footnote{Weierstrass Institute for Applied
Analysis and Stochastics, Mohrenstrasse 39, D-10117 Berlin,
Germany, E-mail {\tt  frigeri@wias-berlin.de}}
\newline
}

\maketitle

\vspace{-.4cm}

\noindent {\bf Abstract.} We consider a diffuse interface model for an incompressible isothermal
mixture of two viscous Newtonian fluids with different densities in a bounded domain
in two or three space dimensions. The model is
the nonlocal version of the one recently derived by Abels, Garcke and Gr\"{u}n and
consists in a Navier-Stokes type system 
coupled
with a convective nonlocal Cahn-Hilliard equation.
The density of the mixture depends on an order parameter.
For this nonlocal system we prove existence of global dissipative weak solutions
for the case of singular double-well potentials and non degenerate mobilities.
To this goal
we devise an approach which
is completely independent of the one employed by Abels, Depner and Garcke
to establish existence of weak solutions for the local Abels et al. model.


\vspace{.4cm}

\noindent
{\bf Key words:} Diffuse interface model, Incompressible viscous binary fluids, Navier--Stokes system, nonlocal Cahn--Hilliard equation.

\noindent
{\bf AMS (MOS) subject clas\-si\-fi\-ca\-tion:}
76T99, 35Q30, 35Q35, 76D03, 76D03, 76D05, 76D27.

\section{Introduction}
In this paper we  study the following nonlocal Cahn-Hilliard/Navier-Stokes type
system
\begin{align}
&(\rho\uvec)_t+\mbox{div}(\rho\uvec\otimes\uvec)-2\mbox{div}\big(\nu(\varphi)D\uvec\big)+\nabla\pi+\mbox{div}(\uvec\otimes\Jvect)
=\mu\nabla\varphi+\hvec,\label{Pbor1}\\
&\mbox{div}(\uvec)=0,\label{Pbor2}\\
&\varphi_t+\uvec\cdot\nabla\varphi=\mbox{div}(m(\varphi)\nabla\mu),\label{Pbor3}\\
&\mu=a\varphi-J\ast\varphi+F'(\varphi),\label{Pbor4}\\
&\Jvect=-\beta m(\varphi)\nabla\mu,\qquad\beta=(\widetilde{\rho}_2-\widetilde{\rho}_1)/2,\label{Pbor5}\\
&\rho(\varphi)=\frac{1}{2}(\widetilde{\rho}_2+\widetilde{\rho}_1)+\frac{1}{2}(\widetilde{\rho}_2-\widetilde{\rho}_1)\varphi,\label{Pbor6}
\end{align}
in $Q:=\Omega\times(0,T)$, where $\Omega\subset\mathbb{R}^d$, $d=2,3$, is a bounded smooth domain
and $T>0$ is an arbitrary final time.
The associated boundary and initial conditions are
\begin{align}
&\uvec=0,\qquad\frac{\partial\mu}{\partial\nvec}=0,\qquad\mbox{on }\partial\Omega,\label{Pbor7}\\
&\uvec(0)=\uvec_0,\qquad\varphi(0)=\varphi_0,\qquad\mbox{in }\Omega,\label{Pbor8}
\end{align}
where $\partial\Omega$ is the boundary of $\Omega$ and $\nvec$ is its outward unit normal.

System \eqref{Pbor1}--\eqref{Pbor8} couples a momentum balance equation \eqref{Pbor1}
for the velocity field $\uvec$ with a nonlocal convective Cahn-Hilliard equation \eqref{Pbor3} for
the order parameter $\varphi$ (difference of the volume fractions of the fluids) and
describes the flow and phase separation of an isothermal mixture
of two incompressible Newtonian viscous immiscible fluids with different densities taking into account long-range
interactions between the molecules.
Equation \eqref{Pbor2} accounts for the incompressibility of the mixture,
$\widetilde{\rho}_1, \widetilde{\rho}_2>0$ are the specific constant mass densities of the unmixed fluids,
$\rho=\rho(\varphi)$ given by \eqref{Pbor6} is the density of the mixture,
$\pi$ is the pressure, $\hvec$ is the
external volume force density and $D$ denotes the symmetric gradient, which is defined by $D\uvec:=(\nabla\uvec+\nabla^T\uvec)/2$.
Moreover, if $\boldsymbol{a},\boldsymbol{b}\in\mathbb{R}^d$, we denote by
$\boldsymbol{a}\otimes\boldsymbol{b}$ the tensor defined by $(\boldsymbol{a}\otimes\boldsymbol{b})_{i,j}=a_i b_j$,
for $i,j=1,\cdots,d$.

The mobility $m$ in \eqref{Pbor3} and the viscosity $\nu$ in \eqref{Pbor1} are assumed to be $\varphi-$dependent
and non degenerate, namely both are bounded from below (and above) by positive constants.
The chemical potential $\mu$ contains the spatial convolution $J\ast\varphi$ over $\Omega$, defined by
\begin{align*}
& (J\ast\varphi)(x):=\int_\Omega J(x-y)\varphi(y)dy,\qquad x\in\Omega,
\end{align*}
of the order parameter $\varphi$ with a sufficiently smooth interaction kernel $J$ satisfying $J(z)=J(-z)$.
Moreover, $a$ is given by 
\begin{align*}
& a(x):=\int_\Omega J(x-y)dy,
\end{align*}
for $x\in\Omega$.
The double-well potential $F$ is assumed to be singular and, in particular,
 a physically interesting case that will be included in our analysis
 is the following (see \cite{CH})
\begin{align}
&F(s) = \frac{\theta}{2}((1+s)\log(1+s)+(1-s)\log(1-s)) -\frac{\theta_c}{2}s^2,\qquad 0<\theta<\theta_c,
\label{log}
\end{align}
where $\theta$, $\theta_c$ are the (absolute) temperature and the critical temperature, respectively.

System
\eqref{Pbor1}--\eqref{Pbor6} represents the nonlocal version of the
well known thermodynamically consistent diffuse interface model
for two-phase flow with different densities derived by Abels, Garcke and Gr\"{u}n in \cite{AbGG}.
We recall that the local
model deduced in \cite{AbGG} consists in the above system
with the chemical potential $\mu$ replaced with the local one
\begin{align}
&\mu=-\Delta\varphi+F'(\varphi),\label{locchpot}
\end{align}
and completed with an additional homogeneous Neumann boundary condition for $\varphi$.

We recall that the local chemical potential \eqref{locchpot}
is the first variation of the local free energy functional (see \cite{CH})
$$\mathcal{E}_{loc}(\varphi):=\int_\Omega\Big(\frac{1}{2}|\nabla\varphi|^2+F(\varphi)\Big).$$
Actually, the local free energy considered in \cite{AbGG} contains also a positive
coefficient $a(\varphi)$ multiplying the $|\nabla\varphi|^2$ under the integral. However, here
we have set $a(\varphi)=1$, since this coefficient does not introduce
substantial complications into the analysis.

A different form of the free energy can be associated to the fluid mixture, more precisely
the one proposed in \cite{GL1,GL2} and rigorously justified as a macroscopic limit
of microscopic phase segregation models with particles conserving dynamics (see also \cite{CF}).
In this case the gradient term is replaced by a nonlocal spatial interaction integral, namely
\begin{align*}
&E(\varphi)=\frac{1}{4}\int_\Omega\int_\Omega J(x-y)\big(\varphi(x)-\varphi(y)\big)^2 dxdy+\int_\Omega F(\varphi),
\end{align*}
and the nonlocal chemical potential given by \eqref{Pbor4} is obtained by taking the first variation of $E$.
The physical relevance of nonlocal interactions was already pointed out in the pioneering
paper \cite{Ro} (see also \cite[4.2]{Em} and references therein) and studied
(in the case of constant velocity) for different kind of evolution equations,
mainly Cahn-Hilliard and phase field systems, see, e.g., \cite{BH1, CKRS, GZ, GL1, GL2, GLM, LP,LP2,KRS, KRS2, GG4}.

Diffuse interface models for two-phase flow of fluids with identical densities are very well
established and studied in literature.
These models are based on the so-called model H (see \cite{HH,GPV}, cf. also \cite{Do,M} and references therein),
in which the sharp interface separating the two fluids is replaced by a diffuse one by introducing
an order parameter (cf. \cite{AMW}). They consist of the Navier-Stokes equations for the velocity
field $\uvec$ nonlinearly coupled with a convective Cahn-Hilliard equation for an order parameter
$\varphi$ (cf., for instance, \cite{AMW,GPV,HMR,HH,JV,Kim2012,LMM}).

As far as analytical results for the matched density case (i.e., $\widetilde{\rho}_1=\widetilde{\rho}_2$)
are concerned,
the local Cahn-Hilliard/Navier-Stokes system has been tackled by several authors
(see, e.g., \cite{A1,A2,Abels2,B,CG,GG1,GG2,GG3,HHK,LS,S,ZWH,ZF}
and also \cite{ADT,Bos,GP,KCR} for models with shear dependent viscosity), while the investigation
of its nonlocal version (from the analytical viewpoint concerning well-posedness and long-term behavior)
has started only more recently
(cf., e.g., \cite{CFG,FGG,FG1,FG2,FGK,FGR,FRS}).
In particular, the following situations have been addressed:
regular potential $F$ associated with constant mobility in \cite{CFG,FGG,FG1,FGK}; singular potential associated
with constant mobility in \cite{FG2}; singular potential and degenerate mobility in \cite{FGR}; the case of nonconstant viscosity in \cite{FGG},
which is particularly delicate as far as regularity results in two dimensions are concerned.
In the two-dimensional case it was shown in \cite{FGK} that
for regular potentials and constant mobilities the problem \eqref{Pbor1}--\eqref{Pbor8}
 with $\widetilde{\rho}_1=\widetilde{\rho}_2$ admits
a unique strong solution. Recently, uniqueness was proved also for weak solutions
(see \cite{FGG}). Moreover, relying on the uniqueness results of \cite{FGK} and \cite{FGG}
a related optimal control problem was studied in \cite{FRS} for the case of constant mobility
and regular potential.

Despite the considerable amount of contributions dealing with the matched density case,
analytical results related to models for two-phase flow of fluids with unmatched
densities are quite sporadic. In particular, as far as the local
Abels-Garcke-Gr\"{u}n model is concerned, the first results on existence of weak
solutions were obtained by
Abels, Depner and Garcke in \cite{ADG1}, for the system
with singular potential and non degenerate mobility
and in \cite{ADG2}, for the case of a regular potential and degenerate mobility.
Regarding other diffuse interface models for fluids with different densities
we recall the one considered by Boyer in \cite{B2}. He proved existence of local in time strong
solutions and existence of global weak solutions provided the densities of the fluids are sufficiently close.
We also recall the quasi-incompressible model of Lowengrub and Truskinovsky \cite{LT}, where
the velocity field is not divergence free, for which the first analytical results
were obtained in \cite{Abels2,A3}.

As far as {\itshape nonlocal} models for fluids with unmatched densities
are concerned,
to the best of our knowledge no analytical results have been established so far
and this paper aims to be a first contribution in this direction.
More precisely, the goal of this paper is to prove existence of global
dissipative weak solutions for the nonlocal Abels-Garcke-Gr\"{u}n model
given by system \eqref{Pbor1}--\eqref{Pbor8}, assuming, as in \cite{ADG1},
that the potential is singular and the mobility is non degenerate.
By weak solutions here we mean solutions with the minimum regularity requirement
to allow a finite energy and the validity of an energy dissipation inequality.

Before explaining the strategy of the proof, let us briefly recall
the approach used in \cite{ADG1} 
 and discuss on the possibility
to apply this approach to prove existence of weak solutions for
the nonlocal system \eqref{Pbor1}--\eqref{Pbor8}.
In \cite{ADG1} existence of a weak solution is established by employing an implicit time discretization
scheme. In particular, we point out that the Leray-Schauder fixed-point argument devised for the existence of a solution of the
time-discrete problem (cf. \cite[Lemma 4.3]{ADG1}) relies on the possibility of inverting the local relation
between the chemical potential $\mu$ and $\varphi$ given by \eqref{locchpot}.
This possibility is due to the fact that the relation between $\mu$ and $\varphi$ can be expressed by means
of a maximal monotone operator since $\mu$ can be viewed as the subdifferential of the lower-semicontinuous
convex (up to a quadratic perturbation) local functional $\mathcal{E}_{loc}$.
This approach allows in particular to keep $\varphi$ between the singular points $-1$ and $1$ in all
the analysis. Indeed, the Abels-Garcke-Gr\"{u}n model
is meaningful only when we have a bound on $\varphi$
between $-1$ and $1$ which allows to keep the density $\rho$
bounded from below and above by positive constants.
This bound is ensured, in the case of the local Abels-Garcke-Gr\"{u}n model,
by working with a singular potential as in \cite{ADG1} or with a degenerate mobility as in \cite{ADG2}.

Now, the direct application of the approach devised in \cite{ADG1}
seems hard in the present situation.
Indeed,
the nonlocal chemical potential $\mu$
can no longer be expressed as a subdifferential of a lower semicontinuous convex functional
and therefore the theory of maximal monotone operators is not
directly applicable in the analysis. Moreover, the inversion
of the nonlocal relation between $\mu$ and $\varphi$, under some reasonable
conditions on the kernel $J$ and on the potential $F$, seems to be a rather difficult task.

A possibility to still exploit the approach of \cite{ADG1} in order to prove existence of weak
solutions for the nonlocal system could be to introduce
a local perturbation term of the form $-\delta\Delta\varphi$ on the right hand side of \eqref{Pbor4}.
Existence of a weak solution to the corresponding perturbed system could be
proven, for every $\delta>0$, by suitably adapting the argument of \cite{ADG1}.
Then, existence of a weak solution to the original problem would be
obtained by passing to the limit as $\delta\to 0$ (arguing as in Step III of the proof of the main result
of the present paper).

Although this approach would be possible, however, we propose here an alternative strategy which does not rely at all
on the result of \cite{ADG1}.
Our approach does not employ a time-discretization scheme and does not make use of Leray-Schauder fixed point arguments,
but it is essentially based on the Faedo-Galerkin method
and hence
it is particularly suitable for a possible numerical implementation.



Let us now describe the main lines of our approach.
The starting idea consists in
approximating the singular
potential $F$ by a suitable family of regular potentials $F_\e$ defined on the whole of
$\mathbb{R}$.
This idea, that we already used in \cite{FG2} for the same nonlocal system with matched desities,
is quite classical (see, e.g., \cite{EG,B}). Nevertheless, it leads to some troubles
when applied to our problem. Indeed, if $F$ is replaced by $F_\e$, we shall have
to solve a problem in which the values of $\varphi_\e$ (the $\varphi-$component of the solution
to the approximate problem with potential $F_\e$) are no longer restricted to $(-1,1)$
but belong to the whole of $\mathbb{R}$. This implies that $\rho(\varphi_\e)$
in this $\e-$approximate problem is no longer a-priori bounded from below
by a positive constant and consequently we are in trouble
to get an $L^\infty(L^2)$
estimate for the velocity field $\uvec_\e$.

To overcome this difficulty a possibility is to replace the linear density function $\rho(\varphi)$ by
a fixed smooth extension $\rhot(\varphi)$ from $[-1,1]$ onto $\mathbb{R}$  satisfying
\begin{align}
&0<\rho_\ast\leq\rhot(s)\leq\rho^\ast,\qquad|\rhot^{(k)}(s)|\leq R_k,\qquad\forall s\in\mathbb{R},\quad k=1,2,
\label{bdrho1}\\
&\rhot(s)=\rho(s),\qquad\forall s\in [-1,1],\nonumber
\end{align}
where $\rho_\ast,\rho^\ast,R_1,R_2$ are some given positive constants.
However, we are now led
to a further difficulty. Indeed,
 if we deduce (formally) an energy equation
 from system
 \eqref{Pbor1}--\eqref{Pbor5} in which the linear function $\rho$ is replaced by the nonlinear
 function $\rhot$, by multiplying \eqref{Pbor1} by $\uvec$, \eqref{Pbor3} by $\mu$,
 integrating over $\Omega$ by parts and taking \eqref{Pbor4}--\eqref{Pbor6},
 the incompressibility condition \eqref{Pbor2} and the boundary conditions into account, after some computations we obtain
\begin{align}
&\frac{d}{dt}\Big(\int_\Omega\frac{1}{2}\rhot(\varphi)\uvec^2+E_\e(\varphi)\Big)
+2\int_\Omega\nu(\varphi)|D\uvec|^2+\int_\Omega m(\varphi)|\nabla\mu|^2\nonumber\\
&=\frac{1}{2}\int_\Omega\rhot''(\varphi) m(\varphi)(\nabla\varphi\cdot\nabla\mu) \uvec^2
+\int_\Omega \hvec\cdot\uvec,
\label{enbaldes}
\end{align}
where
\begin{align*}
&E_\e(\varphi)
=\frac{1}{4}\int_\Omega\int_\Omega J(x-y)\big(\varphi(x)-\varphi(y)\big)^2 dxdy+\int_\Omega F_\e(\varphi),
\end{align*}
and
where we have denoted $\uvec_\e,\varphi_\e,\mu_\e$ simply by $\uvec,\varphi,\mu$, for the sake of simplicity.
Therefore, a nonlinear $\rhot$ in system \eqref{Pbor1}--\eqref{Pbor5} destroys the energy balance.
A possibility to handle the nasty nonlinear term on the right hand side of \eqref{enbaldes}
is to recover the energy balance
by inserting, in the approximate problem with the
potential $F_\e$, the term $(1/2)\rhot''(\varphi) m(\varphi)(\nabla\varphi\cdot\nabla\mu) \uvec$
on the left hand side of the momentum-balance equation \eqref{Pbor1}.
This easy trick leads however to still another problem, namely, the problem to pass to the limit
in this new ``artificial" nonlinear term.
The idea at this point is to introduce some suitable regularizing terms in the system,
depending on another positive parameter $\delta$ which will be made go to zero in a second time.
These regularizing terms, which allow to gain enough compactness to be able to pass to the limit,
must be cleverly devised, since: (i) the energy balance should not
be destroyed and (ii) when passing to the limit, firstly as $\e\to 0$ and secondly as $\delta\to 0$,
it should still be possible to prove that the limit $\varphi$ satisfies the bound $|\varphi|<1$.
This bound on $\varphi$  will in particular 
allow the nasty artificial term to vanish in the limit,
permitting then to recover the original momentum-balance equation.
More precisely, the regularizing terms that have been proven to be effective to our purpose
are the term $\delta A^3\uvec$ in \eqref{Pbor1} (this means, more exactly, that the term $\delta(A^{3/2}\uvec,A^{3/2}\wvec)$
is introduced in the variational formulation of \eqref{Pbor1} with test function $\wvec\in D(A^{3/2})$, cf. Definition \ref{wsdefn2};
here $A$ is the Stokes operator
with no-slip boundary condition), and still the term $-\delta\Delta\varphi$ in the expression
of the chemical potential $\mu$.

Summing up, our approach consists in proving existence of a weak solution
to problem \eqref{Pbor1}--\eqref{Pbor6} by approximating this problem
with a two-parameter family of problems of the following form
\begin{align}
&(\rhot\uvec)_t+\mbox{div}(\rhot\uvec\otimes\uvec)-2\mbox{div}\big(\nu(\varphi)D\uvec\big)+\delta A^3\uvec+\nabla\pi+\mbox{div}(\uvec\otimes\Jvect)
\nonumber\\
&+\frac{1}{2}\rhot''(\varphi)m(\varphi)(\nabla\varphi\cdot\nabla\mu)\uvec=\mu\nabla\varphi+\hvec,\label{Pb01}\\
&\mbox{div}(\uvec)=0,\label{Pb02}\\
&\varphi_t+\uvec\cdot\nabla\varphi=\mbox{div}(m(\varphi)\nabla\mu),\label{Pb03}\\
&\mu=a\varphi-J\ast\varphi+F_\e'(\varphi)-\delta\Delta\varphi,\label{Pb04}\\
&\Jvect:=-\rhot'(\varphi)m(\varphi)\nabla\mu,\label{Pb05}\\
&\uvec=0,\qquad\frac{\partial\mu}{\partial\nvec}=\frac{\partial\varphi}{\partial\nvec}=0,\qquad\mbox{on }\Gamma,\label{Pb06}\\
&\uvec(0)=\uvec_0,\qquad\varphi(0)=\varphi_{0\delta},\label{Pb07}
\end{align}
where $\e$ and $\delta$ are two fixed parameter. Notice that together with the regularizing term $-\delta\Delta\varphi$
introduced into the chemical potential, a homogeneous Neumann boundary condition for $\varphi$
has to be introduced into the approximate problem and, moreover, the initial datum for $\varphi$ has
to be suitably approximated.
The existence of a weak solution to the original problem will then be recovered
by passing to the limit in two steps in \eqref{Pb01}--\eqref{Pb07}, i.e.,
by first passing to the limit as $\e\to 0$ (with $\delta$ fixed) and then as $\delta\to 0$.
But, of course, before doing this we must prove that problem \eqref{Pb01}--\eqref{Pb07}
(for $\e$ and $\delta$ fixed) admits a weak solution. This will be achieved as first step by
means of a Faedo-Galerkin procedure.

The plan of the paper is as follows: in Section \ref{Prelim} we introduce some notation,
recall some classical results and preliminary lemmas. In Section \ref{main res} we formulate
the assumptions, the definition of weak solution and we state the main result on existence of weak solutions.
Section \ref{proof of main res} is entirely devoted to the proof of the main result. Since, as explained above,
the proof is accomplished by a three level approximation of the original system, Section \ref{proof of main res}
has been split into three subsections for each step of the approximation argument: in Subsection \ref{FG}
we develop the Faedo-Galerkin approximation scheme to prove existence of a solution to problem \eqref{Pb01}--\eqref{Pb07};
in Subsection \ref{lim-eps} we derive uniform in $\e$ estimates that allow to pass to the limit as $\e\to 0$,
in Subsection \ref{lim-delta} we obtain uniform in $\delta$ bounds, we shall pass to the limit as $\delta\to 0$
and conclude the proof.

\section{Preliminaries}\label{Prelim}

Throughout the paper, we set $H:=L^2(\Omega)$, $V:=H^1(\Omega)$, and we denote by $\Vert\,\cdot\,\Vert$
and $(\cdot\,,\,\cdot)$ the standard norm and the scalar product, respectively, in $H$
as well as in $L^2(\Omega)^d$ and $L^2(\Omega)^{d\times d}$.
The notations $\langle\cdot\,,\,\cdot\rangle_{X}$ and $\Vert\,\cdot\,\Vert_X$ will stand for the duality pairing between a Banach space
$X$ and its dual $X'$, and for the norm of $X$, respectively.

We introduce the standard Hilbert spaces for the Navier-Stokes
equations (see, e.g., \cite{T})
$$
G_{div}:=\overline{\mathcal{V}}^{L^2(\Omega)^d},\qquad
V_{div}:=\overline{\mathcal{V}}^{H^1_0(\Omega)^d},\qquad\mathcal{V}:=\{\uvec\in
C^\infty_0(\Omega)^d:\mbox{
div}(\uvec)=0\},
$$
and recall that these spaces, for Lipschitz bounded domains, can be characterized in the following way
$$
G_{div}:=\{\uvec\in
L^2(\Omega)^d:\mbox{div}(\uvec)=0,\:\:\uvec\cdot\nvec|_{\partial\Omega}=0\},\quad
V_{div}:=\{\uvec\in H_0^1(\Omega)^d:\mbox{ div}(\uvec)=0\}.
$$
The norm and scalar product in $G_{div}$ will be denoted again by
$\Vert\,\cdot\,\Vert$ and $(\cdot\,,\,\cdot)$, respectively, and
the space $V_{div}$ is endowed with the scalar product
\begin{align}
&(\uvec_1,\uvec_2)_{V_{div}}:=(\nabla\uvec_1,\nabla\uvec_2)=2\big(D\uvec_1,D\uvec_2\big),
\qquad\forall\,\uvec_1,\uvec_2\in V_{div}.\nonumber
\end{align}
We also introduce the Stokes operator $\,A\,$ with no-slip boundary condition
(see, e.g., \cite{T}). Recall that
$\,A:D(A)\subset G_{div}\to G_{div}\,$ is defined as $\,A:=-P\Delta$, with domain
$\,D(A)=H^2(\Omega)^d\cap V_{div}$,
where $\,P:L^2(\Omega)^d\to G_{div}\,$ is the Leray projector. Moreover, $\,A^{-1}:G_{div}\to G_{div}$
is a selfadjoint compact operator in $G_{div}$. Therefore, according to classical results,
$A$ possesses a sequence of eigenvalues $\{\lambda_j\}_{j\in\mathbb{N}}$ with $0<\lambda_1\leq\lambda_2\leq\cdots$ and $\lambda_j\to\infty$,
and a family $\{\wvec_j\}_{j\in \mathbb{N}}\subset D(A)$ of associated eigenfunctions which is an orthonormal
basis in $G_{div}$.
Moreover, by means of spectral theory the fractional operators $A^s$ are defined for every $s\in\mathbb{R}$
with domains $D(A^{s/2})$, which are Hilbert spaces endowed
with their natural norm and scalar product. Recall that, since $\Omega$ is assumed to be smooth, then we have
$D(A^{s/2})\hookrightarrow H^s(\Omega)^d$, for all $s\geq 0$.

We also recall Poincar\'{e}'s inequality
\begin{align}
&\lambda_1\,\Vert\uvec\Vert^2\leq\Vert\nabla\uvec\Vert^2\qquad\forall\,\uvec\in V_{div}\,.\nonumber
\end{align}


We will also need to use the operator $\,B:=-\Delta+I\,$ with homogeneous Neumann boundary condition. It is well known that $\,B:D(B)\subset H\to H\,$ is an unbounded linear operator in $\,H\,$ with the domain
$$D(B)=\big\{\varphi\in H^2(\Omega):\:\:\partial\varphi/\partial\nvec=0\,\,
\mbox{ on }\partial\Omega\big\},$$
and that $B^{-1}:H\to H$ is a selfadjoint compact operator on $H$. By a classical spectral theorem there exist a sequence of eigenvalues $\mu_j$ with $0<\mu_1\leq\mu_2\leq\cdots$ and $\mu_j\to\infty$,
and a family of associated eigenfunctions $w_j\in D(B)$ such that $Bw_j=\mu_j\, w_j\,$ for all
$j\in \mathbb{N}$. The family  $\,\{w_j\}_{j \in\mathbb{N}}\,$ forms an orthonormal basis in
$H$ and is also orthogonal in $V$ and $D(B)$.

Furthermore, for every $f\in V'$ we denote by $\overline{f}$
the average of $f$ over $\Omega$, i.e.,
$\overline{f}:=|\Omega|^{-1}\langle f,1\rangle_V$
(here $|\Omega|$ stands for the Lebesgue measure of
$\Omega$), and we introduce the spaces
$$V_0:=\{v\in V:\overline{v}=0\},\qquad V_0':=\{f\in V':\overline{f}=0\}.$$
If $m\in C(\mathbb{R})$ satisfies $m_\ast\leq m(s)\leq m^\ast$ for all $s\in\mathbb{R}$, with $m_\ast,m^\ast>0$,
then, for every measurable $\varphi:\Omega\to\mathbb{R}$ we can define the operator
$\mathcal{B}_{\varphi}:V\to V'$ by
\begin{align*}
&\langle\mathcal{B}_{\varphi} u,v\rangle_V:=\int_\Omega m(\varphi)\nabla u\cdot\nabla v,\qquad\forall u,v\in V.
\end{align*}
For every measurable $\varphi$ this operator maps $V$ onto $V_0'$ and its restriction to $V_0$
(still denoted by $\mathcal{B}_{\varphi}$)
maps $V_0$ onto $V_0'$ isomorphically. Let us denote by $\mathcal{N}_\varphi:V_0'\to V_0$ the inverse map
defined by
$$\mathcal{B}_\varphi\mathcal{N}_\varphi f=f,\quad\forall f\in V_0'\qquad
\mbox{and}\qquad\mathcal{N}_\varphi\mathcal{B}_\varphi u=u,\quad\forall u\in V_0.$$
As is well known, for every $f\in V_0'$ and every measurable $\varphi$, $\mathcal{N}_\varphi f$ is the unique
solution with zero mean value of the Neumann problem
\begin{equation*}
\left\{\begin{array}{ll}
-\mbox{div}(m(\varphi)\nabla u)=f,\qquad\mbox{in }\Omega,\\
\frac{\partial u}{\partial n}=0,\qquad\mbox{on }\partial\Omega.
\end{array}\right.
\end{equation*}
Furthermore, the following relations hold
\begin{align}
&\langle \mathcal{B}_\varphi u,\mathcal{N}_\varphi f\rangle_V=\langle f,u\rangle_V,
\qquad\forall u\in V,\quad\forall f\in V_0',\label{mathN1}\\
&\langle f,\mathcal{N}_\varphi g\rangle_V=\langle g,\mathcal{N}_\varphi
 f\rangle_V=\int_{\Omega}m(\varphi)\nabla(\mathcal{N}_\varphi f)
\cdot\nabla(\mathcal{N}_\varphi g),\qquad\forall f,g\in V_0'.\label{mathN2}
\end{align}
It is also easy to see that, for every measurable $\varphi$, we have
\begin{align}
&\frac{1}{m^\ast}\Vert f\Vert_{V'}\leq\Vert\mathcal{N}_\varphi f\Vert_V\leq\frac{1}{m_\ast}\Vert f\Vert_{V'},\qquad\forall f\in V_0'.
\label{mathN}
\end{align}

We end this section recalling threee lemmas that shall be helpful in the analysis.

The first one is a simple lemma which will be useful for passing to the limit in the variable viscosity and mobility terms
of the energy inequality. Its proof is left to the reader.
\begin{lem}\label{simplelem}
Let $Q\subset\mathbb{R}^N$, $N\geq 1$, and let $\{f_n\}\subset L^\infty(Q)$ be a sequence such that
$\Vert f_n\Vert_{L^\infty(Q)}\leq C$ and $f_n\to f$ strongly in $L^2(Q)$.
Let $\{g_n\}\subset L^2(Q)$ be another sequence such that $g_n\rightharpoonup g$ weakly in $L^2(Q)$.
Then $f_n g_n\rightharpoonup fg$ weakly in $L^2(Q)$.
\end{lem}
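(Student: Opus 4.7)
My plan is to test $f_n g_n - fg$ against an arbitrary $\phi \in L^2(Q)$ and show the pairing tends to zero. A preliminary observation: extracting an a.e.\ convergent subsequence from $\{f_n\}$ (which converges in $L^2(Q)$ to $f$) gives $|f|\leq C$ a.e., and the same bound holds for the original limit since every subsequence admits a further a.e.\ convergent subsequence. Hence $f\in L^\infty(Q)$ with $\Vert f\Vert_{L^\infty(Q)}\leq C$; in particular $fg\in L^2(Q)$ and $\Vert f_n g_n\Vert_{L^2(Q)}\leq C\Vert g_n\Vert_{L^2(Q)}$ is uniformly bounded, so both sides of the desired weak convergence sit in the correct space.

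I would then split
$$\int_Q (f_n g_n - fg)\phi = \int_Q f(g_n - g)\phi + \int_Q (f_n - f)g_n \phi.$$
The first term vanishes as $n\to\infty$ because $f\phi\in L^2(Q)$ (using $f\in L^\infty$) and $g_n\rightharpoonup g$ weakly in $L^2(Q)$. For the second, Cauchy--Schwarz gives
$$\Big|\int_Q (f_n - f)g_n \phi\Big| \leq \Vert(f_n - f)\phi\Vert_{L^2(Q)}\,\Vert g_n\Vert_{L^2(Q)},$$
and $\Vert g_n\Vert_{L^2(Q)}$ is bounded by the uniform boundedness principle for weakly convergent sequences.

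What remains is to show $\Vert(f_n - f)\phi\Vert_{L^2(Q)}\to 0$. This is immediate when $\phi$ is bounded, since then $\Vert(f_n - f)\phi\Vert_{L^2(Q)}\leq\Vert\phi\Vert_{L^\infty(Q)}\Vert f_n - f\Vert_{L^2(Q)}$. For a general $\phi\in L^2(Q)$ I would use a density argument: given $\eta>0$, pick a bounded $\psi$ with $\Vert\phi - \psi\Vert_{L^2(Q)}<\eta$, and exploit $|f_n - f|\leq 2C$ a.e.\ to get
$$\Vert(f_n - f)\phi\Vert_{L^2(Q)}\leq\Vert\psi\Vert_{L^\infty(Q)}\Vert f_n - f\Vert_{L^2(Q)} + 2C\,\Vert\phi - \psi\Vert_{L^2(Q)}.$$
Sending $n\to\infty$ first and then $\eta\to 0$ closes the argument. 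The only (mild) obstacle is this last density step, needed because the pointwise product of two $L^2$ functions need not be in $L^2$, so one cannot apply dominated convergence to $(f_n - f)\phi$ directly without a preliminary truncation of $\phi$.
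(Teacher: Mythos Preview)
Your argument is correct. The paper does not supply its own proof of this lemma (it explicitly leaves it to the reader), so there is nothing to compare against; your splitting $f_ng_n-fg=f(g_n-g)+(f_n-f)g_n$ together with the density step is a standard and complete route. As a minor remark, the density step can be replaced by dominated convergence via the sub-subsequence trick: $|(f_n-f)\phi|^2\leq 4C^2|\phi|^2\in L^1(Q)$, and every subsequence of $\{f_n\}$ has a further subsequence converging a.e.\ to $f$, which forces $\Vert(f_n-f)\phi\Vert_{L^2(Q)}\to 0$ along the full sequence.
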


The next lemma will be needed to prove the weak continuity of velocities
with values in $G_{div}$. If $X$ is a Banach space, we denote by
$C_w([0,T];X)$ the topological vector space of weakly continuous
functions $f:[0,T]\to X$.
\begin{lem}
\label{Strauss}
Let $X,Y$ be two Banach spaces such that $Y\hookrightarrow X$ and $X'\hookrightarrow Y'$ densely.
Then $L^\infty(0,T;Y)\cap C([0,T];X)\hookrightarrow C_w([0,T];Y)$.
\end{lem}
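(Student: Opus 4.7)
My plan is to verify two things: (i) $f(t)\in Y$ for \emph{every} $t\in[0,T]$, with the uniform bound $\Vert f(t)\Vert_Y\leq M:=\Vert f\Vert_{L^\infty(0,T;Y)}$, and (ii) for every $g\in Y'$ the map $t\mapsto\langle g,f(t)\rangle_{Y',Y}$ is continuous on $[0,T]$. Note that $f\in C([0,T];X)$ already selects an unambiguous pointwise value $f(t)\in X$ at each $t$, so the only question at a fixed $t_0$ is whether this value sits in the smaller space $Y$ and interacts properly with test functionals from $Y'$.

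For (i), fix $t_0\in[0,T]$ and choose a sequence $t_n\to t_0$ such that $f(t_n)\in Y$ and $\Vert f(t_n)\Vert_Y\leq M$ for every $n$, which is possible since this condition holds on a full-measure subset of $[0,T]$. In the applications that the paper actually makes, $Y$ is a Hilbert space, hence reflexive, so a subsequence $f(t_{n_k})\rightharpoonup\eta$ weakly in $Y$ with $\Vert\eta\Vert_Y\leq M$. The continuous embedding $Y\hookrightarrow X$ propagates this to $f(t_{n_k})\rightharpoonup\eta$ weakly in $X$, while $f\in C([0,T];X)$ gives $f(t_{n_k})\to f(t_0)$ strongly in $X$. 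Uniqueness of weak limits in $X$ then forces $\eta=f(t_0)$, whence $f(t_0)\in Y$ with the bound $\Vert f(t_0)\Vert_Y\leq M$. (In the absence of reflexivity one would instead extract a weak-$*$ limit $\eta\in Y''$ via Banach--Alaoglu and use the density of $X'$ in $Y'$ to match the action of $\eta$ with evaluation at $f(t_0)\in X$ on a dense subset; identifying $\eta$ with an element of $Y$ in general would require an additional structural assumption, but reflexivity in the applications spares us this subtlety.)

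For (ii), fix $t_0\in[0,T]$, $g\in Y'$ and $\epsilon>0$, and by density of $X'$ in $Y'$ pick $\phi\in X'$ with $\Vert g-\phi\Vert_{Y'}<\epsilon$. Since the embedding $X'\hookrightarrow Y'$ is compatible with the embedding $Y\hookrightarrow X$ in the sense that the two pairings coincide on the overlap, one has
\begin{equation*}
\langle g,f(t)-f(t_0)\rangle_{Y',Y}=\langle g-\phi,f(t)-f(t_0)\rangle_{Y',Y}+\langle\phi,f(t)-f(t_0)\rangle_{X',X}.
\end{equation*}
The first term on the right is bounded by $\Vert g-\phi\Vert_{Y'}(\Vert f(t)\Vert_Y+\Vert f(t_0)\Vert_Y)\leq 2M\epsilon$ thanks to step~(i); the second tends to $0$ as $t\to t_0$ by strong $X$-continuity of $f$. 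Taking $\limsup$ as $t\to t_0$ and then letting $\epsilon\to 0$ gives the desired continuity of $t\mapsto\langle g,f(t)\rangle_{Y',Y}$ at $t_0$.

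The main obstacle is step~(i): upgrading almost-everywhere information in $Y$ to the pointwise statement that the $X$-continuous representative $f(t_0)$ actually lies in $Y$, together with the uniform norm bound. Once this compactness-type step is secured, step~(ii) is a routine three-$\epsilon$ argument whose sole purpose is to exploit the density hypothesis to transfer strong continuity in $X$ into weak continuity in $Y$.
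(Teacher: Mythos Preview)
The paper does not supply its own proof of this lemma; it simply cites \cite{Abels2} (see the sentence following Lemma~\ref{Abelslemma}). Your argument is the standard one and is correct under the reflexivity assumption on $Y$ that you invoke in step~(i); since every application of the lemma in the paper has $Y=G_{div}$ or $Y=L^2(\Omega)^d$, both Hilbert spaces, this is entirely adequate. The caveat you raise about the non-reflexive case is accurate---identifying the weak-$*$ cluster point in $Y''$ with an element of $Y$ genuinely requires extra structure---but it is not needed here, and the lemma as stated in the paper is in any case intended for the reflexive setting of the applications.
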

The last lemma will be useful to deduce the energy inequality.
\begin{lem}
\label{Abelslemma}
Let $\mathcal{E}:[0,T)\to\mathbb{R}$, $0<T\leq\infty$, be a lower semicontinuous function
and let $\mathcal{D}:(0,T)\to\mathbb{R}$ be an integrable function.
Assume that the inequality
\begin{align*}
\mathcal{E}(0)\omega(0)+\int_0^T\mathcal{E}(\tau)\omega'(\tau)d\tau\geq\int_0^T\mathcal{D}(\tau)\omega(\tau)d\tau
\end{align*}
holds for all $\omega\in W^{1,1}(0,T)$ with $\omega(T)=0$ and $\omega\geq 0$. Then, we have
\begin{align*}
&\mathcal{E}(t)+\int_s^t\mathcal{D}(\tau)d\tau\leq\mathcal{E}(s),
\end{align*}
for almost all $s\in[0,T)$, including $s=0$, and for all $t\in[s,T)$.
\end{lem}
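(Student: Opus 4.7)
My plan is to derive the pointwise conclusion by testing the assumed weak inequality against a one-parameter family of piecewise-linear peak functions approximating the indicator of $[s,t]$. Fix $t\in[0,T)$ and first consider $s\in(0,t)$. For small $h>0$ satisfying $s-h>0$ and $t+h<T$, I would define $\omega_h\in W^{1,1}(0,T)$ to be continuous and piecewise linear, with $\omega_h\equiv 0$ on $[0,s-h]\cup[t+h,T]$, $\omega_h\equiv 1$ on $[s,t]$, and linearly interpolated on $[s-h,s]$ and $[t,t+h]$. Then $\omega_h$ is admissible ($\omega_h(T)=0$, $\omega_h\geq 0$, $\omega_h(0)=0$), its distributional derivative is $\omega_h'=h^{-1}\chi_{[s-h,s]}-h^{-1}\chi_{[t,t+h]}$, and substituting into the hypothesis gives
\begin{equation*}
\frac{1}{h}\int_{s-h}^{s}\mathcal{E}(\tau)\,d\tau \;-\; \frac{1}{h}\int_{t}^{t+h}\mathcal{E}(\tau)\,d\tau \;\geq\; \int_{0}^{T}\mathcal{D}(\tau)\,\omega_h(\tau)\,d\tau.
\end{equation*}

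The next step is to let $h\to 0^+$ in this inequality term by term. The right-hand side converges to $\int_s^t\mathcal{D}(\tau)\,d\tau$ by dominated convergence, since $0\leq\omega_h\leq 1$, $\omega_h\to\chi_{[s,t]}$ pointwise, and $\mathcal{D}\in L^1(0,T)$. The backward Steklov average $h^{-1}\int_{s-h}^{s}\mathcal{E}\,d\tau$ tends to $\mathcal{E}(s)$ at every Lebesgue point of $\mathcal{E}$, i.e.\ for almost every $s\in(0,T)$, by the Lebesgue differentiation theorem (here I use the local integrability of $\mathcal{E}$, which is implicit in the hypothesis in order for $\int_0^T\mathcal{E}\omega'$ to be defined). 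For the forward Steklov average I would exploit lower semicontinuity: given $\varepsilon>0$ there exists $\delta>0$ with $\mathcal{E}(\tau)>\mathcal{E}(t)-\varepsilon$ for every $\tau\in[t,t+\delta)$, so
\begin{equation*}
\liminf_{h\to 0^{+}}\frac{1}{h}\int_{t}^{t+h}\mathcal{E}(\tau)\,d\tau \;\geq\; \mathcal{E}(t), \qquad \text{for every } t\in[0,T).
\end{equation*}

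Denoting the two Steklov averages by $f_h$ and $g_h$, since $f_h\to\mathcal{E}(s)$ at almost every $s$, passing to $\liminf_{h\to 0^+}$ in the displayed inequality yields $\mathcal{E}(s)-\limsup_{h\to 0^+}g_h\geq\int_s^t\mathcal{D}$, that is, $\limsup_{h\to 0^+}g_h\leq\mathcal{E}(s)-\int_s^t\mathcal{D}$. Combining this with $\liminf_{h\to 0^+}g_h\geq\mathcal{E}(t)$ produces the desired pointwise inequality
\begin{equation*}
\mathcal{E}(t)+\int_{s}^{t}\mathcal{D}(\tau)\,d\tau \;\leq\; \mathcal{E}(s)
\end{equation*}
for almost every $s\in(0,T)$ and every $t\in[s,T)$. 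The endpoint case $s=0$ is handled by replacing $\omega_h$ by its one-sided variant equal to $1$ on $[0,t]$, descending linearly on $[t,t+h]$, and vanishing thereafter; then $\omega_h(0)=1$ and the boundary term $\mathcal{E}(0)\omega_h(0)=\mathcal{E}(0)$ appears explicitly in the limit, so no Lebesgue-point condition on $s$ is required.

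The main subtlety, which is exactly what produces the asymmetric ``almost every $s$, every $t$'' in the statement, is that lower semicontinuity controls the forward-in-time Steklov average of $\mathcal{E}$ at every point, whereas the backward average producing $\mathcal{E}(s)$ relies on Lebesgue differentiation and is therefore only guaranteed at Lebesgue points. Since no upper semicontinuity hypothesis is available, I see no way to replace ``almost every $s$'' by ``every $s$'' without strengthening the regularity of $\mathcal{E}$.
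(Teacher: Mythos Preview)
Your argument is correct and is precisely the standard proof of this lemma. Note, however, that the paper does not actually supply its own proof: it merely cites \cite{Abels2} (see the sentence ``For a proof of the last two lemmas see, e.g., \cite{Abels2}'' following the statement), and your tent-function argument is exactly the approach used there. Your remark that local integrability of $\mathcal{E}$ is implicit in the hypothesis is well taken, and your explanation of the asymmetry between ``almost every $s$'' (via Lebesgue differentiation) and ``every $t$'' (via lower semicontinuity) is accurate.
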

For a proof of the last two lemmas see, e.g., \cite{Abels2}.

Throughout the paper we shall denote by $c$, $C$,.. some nonnegative constants the value
of which may possibly change even within the same line. Generally, the value
of these constants depend on the parameters of the problem (e.g., $F$, $J$, $\nu$, $m$, $\widetilde{\rho}_i$, $\Omega$)
and on the data $\uvec_0$, $\varphi_0$, $\hvec$. Further or particular dependencies will be specified on occurrence.


\section{Main result}\label{main res}

In this section we state the main result on existence of weak solutions of system
\eqref{Pbor1}--\eqref{Pbor8}. The assumptions on the kernel $J$, on the mobility $m$ and on the viscosity $\nu$
are the following
\begin{description}
\item[(A1)]$J\in W^{1,1}(\mathbb{R}^d),\quad
    J(x)=J(-x),\quad a(x) := \displaystyle
\int_{\Omega}J(x-y)dy \geq 0,\quad\mbox{a.e. } x\in\Omega$.
\item[(A2)] $m\in C^{1,1}_{loc}(\mathbb{R})$
and there exist $m_\ast,m^\ast>0$ such that
\begin{align}
m_\ast\leq m(s)\leq m^\ast,\qquad\forall s\in\mathbb{R}.\nonumber
\end{align}
\item[(A3)] $\nu\in C^{0,1}_{loc}(\mathbb{R})$ and there exist $\nu_\ast,\nu^{\ast}>0$ such that
\begin{align*}
&\nu_\ast\leq\nu(s)\leq \nu^\ast,\qquad\forall s\in \mathbb{R}.
\end{align*}
\end{description}
As far as the singular potential $F$ is concerned we shall work
under the same assumptions as in \cite{FG2}. More precisely, we assume that
$F$ can be written in the form
$$F=F_1+F_2,$$
where $F_1\in C^{(p)}(-1,1)$, for some fixed integer $p\geq 3$, $F_2\in C^{2,1}([-1,1])$,
and that the following conditions are satisfied
\begin{description}
\item[(A4)]
There exist $c_1>0$ and $\e_0>0$ such that
\begin{align*}
&F_1^{(p)}(s)\geq c_1,\qquad\forall s\in(-1,-1+\e_0]\cup[1-\e_0,1).
\end{align*}

\item[(A5)] There exists $\e_0>0$ such that, for each
    $k=0,1,\cdots, p$ and each $j=0,1,\cdots, (p-2)/2$,
\begin{align*}
&F_1^{(k)}(s)\geq 0,\qquad\forall s\in[1-\e_0,1),\\
&F_1^{(2j+2)}(s)\geq 0,\qquad F_1^{(2j+1)}(s)\leq 0,\qquad\forall s\in(-1,-1+\e_0].
\end{align*}

\item[(A6)] There exists $\e_0>0$ such that $F_1^{(p)}$ is
    non-decreasing in $[1-\e_0,1)$
and non-increasing in $(-1,-1+\e_0]$.

\item[(A7)] There exists
$c_0>0$ such that
\begin{align*}
&F''(s)+a(x)\geq c_0,\qquad\forall s\in(-1,1),\qquad\mbox{a.a. }x\in\Omega.
\end{align*}

\item[(A8)]$\lim_{s\to\pm 1}F_1'(s)=\pm\infty.$


\end{description}

Finally, the assumption on the external force $\hvec$ is
\begin{description}
\item[(A9)]$\hvec\in L^2(0,T;V_{div}')$, for all $T>0$.
\end{description}

\begin{oss}
{\upshape
Assumption $J\in W^{1,1}(\mathbb{R}^d)$ can be weakened. Indeed, the behavior of the kernel at infinity is not essential.
Alternative conditions are $J\in W^{1,1}(B_{\delta})$, where $B_{\delta}:=\{z\in\mathbb{R}%
^d:|z|<\delta\}$ with $\delta:=\mbox{diam}(\Omega)$, or $J\in W^{1,1}(\Omega-\Omega)$, where
$\Omega-\Omega:=\{z\in\mathbb{R}^d: z=x-y,\:\: x,y\in\Omega\}$ or also (see, e.g.,
\cite{BH1})
\begin{equation*}
\sup_{x\in\Omega}\int_{\Omega}\big(|J(x-y)|+|\nabla J(x-y)|\big)dy<\infty.
\end{equation*}
}
\end{oss}

\begin{oss}
{\upshape
Assumptions (A4)-(A8) are satisfied in the case of the
physically relevant logarithmic double-well potential \eqref{log} for
every fixed integer $p\geq 3$. In particular, setting
$$F_1(s)=\frac{\theta}{2}((1+s)\log(1+s)+(1-s)\log(1-s)),\qquad F_2(s)=-\frac{\theta_c}{2}s^2,$$
then it is easy to check that (A7) is satisfied if and only if $\inf_\Omega a>\theta_c-\theta$. However, note that other reasonable potentials satisfy the above assumptions (e.g., the ones which are unbounded at the endpoints).
}
\end{oss}

Let us state now the notion of weak solution to Problem \eqref{Pbor1}--\eqref{Pbor8}.

\begin{defn}
\label{wsdefn}
Let $\uvec_0\in G_{div}$, $\varphi_0\in L^\infty(\Omega)$ with $F(\varphi_0)\in L^1(\Omega)$ and $0<T<+\infty$ be given.
A couple $[\uvec,\varphi]$ is a weak solution to \eqref{Pbor1}-\eqref{Pbor8} on $[0,T]$ corresponding to $[\uvec_0,\varphi_0]$
if
\begin{itemize}
\item  $\uvec$, $\varphi$ and $\mu$ satisfy
\begin{align}
&\uvec\in C_w([0,T];G_{div})\cap L^2(0,T;V_{div}),\label{rewsol1}\\
&\varphi\in L^{\infty}(0,T;H)\cap L^2(0,T;V),\label{rewsol2}\\
&\mu=a\varphi-J\ast\varphi+F'(\varphi)\in L^2(0,T;V),\label{rewsol3}\\
&(\rho\uvec)_t\in L^{4/3}(0,T;D(A)'),\qquad\varphi_t\in L^2(0,T;V'),\label{rewsol4}
\end{align}
and
\begin{align}
&\varphi\in L^{\infty}(Q),\qquad|\varphi(x,t)|<1\quad\mbox{a.e. }(x,t)\in Q:=\Omega\times(0,T);\label{rewsol5}
\end{align}

\item for every $\psi\in V$, every $\wvec\in D(A)$ and for almost
any $t\in(0,T)$ we have
\begin{align}
&\big\langle (\rho\uvec)_t,\wvec\big\rangle_{D(A)}-(\rho\uvec\otimes\uvec,D\wvec)+(2\nu(\varphi)D\uvec,D\wvec)
-(\uvec\otimes\Jvect,D\wvec)\nonumber\\
&=-(\varphi\nabla\mu,\wvec)+\langle \hvec,\wvec\rangle_{V_{div}},\label{weakfor1}\\
&\langle\varphi_t,\psi\rangle_V+(m(\varphi)\nabla\mu,\nabla\psi)=(\uvec,\varphi\nabla\psi),\label{weakfor2}
\end{align}
where $\Jvect=-\beta m(\varphi)\nabla\mu \in L^2(0,T;H)$;

\item the initial conditions $\uvec(0)=\uvec_0$, $\varphi(0)=\varphi_0$ hold.

\end{itemize}

\end{defn}
\begin{oss}
{\upshape
Notice that \eqref{rewsol2} and the second of \eqref{rewsol4} imply that $\varphi\in C([0,T];H)$.
Hence, thanks also to \eqref{rewsol1}, the initial conditions $\uvec(0)=\uvec_0$, $\varphi(0)=\varphi_0$
make sense.
}
\end{oss}

We are now ready to state the main result.

\begin{thm}
\label{mainres}
Assume that (A1)--(A9) are satisfied for some fixed integer $p\geq 3$, and $d=2,3$. Let $\uvec_0\in G_{div}$, $\varphi_0\in L^\infty(\Omega)$ such that
$F(\varphi_0)\in L^1(\Omega)$ and $|\overline{\varphi}_0|<1$. Then, for every $T>0$ Problem \eqref{Pbor1}--\eqref{Pbor8}
admits a weak solution $[\uvec,\varphi]$ on $[0,T]$ corresponding to $\uvec_0$, $\varphi_0$
such that
\begin{align}
&\varphi\in L^\infty(0,T;L^p(\Omega)),\label{rewsol6}
\end{align}
and satisfying the following energy inequality
\begin{align}
&\mathcal{E}(\uvec(t),\varphi(t))
+\int_s^t \big(2\Vert\sqrt{\nu(\varphi)}D\uvec\Vert^2 d\tau +\Vert\sqrt{m(\varphi)}\nabla\mu\Vert^2\big) d\tau
\leq\mathcal{E}(\uvec(s),\varphi(s))+\int_s^t\langle \hvec,\uvec\rangle_{V_{div}} d\tau,
\label{eninPbor}
\end{align}
for almost all $s\in[0,T)$, including $s=0$, and for all $t\in[s,T]$, where
\begin{align*}
&\mathcal{E}(\uvec,\varphi):=\int_\Omega\frac{1}{2}\rho(\varphi)\uvec^2+E(\varphi),\\
&E(\varphi):=\frac{1}{2}\Vert\sqrt{a}\varphi\Vert^2-\frac{1}{2}(\varphi,J\ast\varphi)+\int_\Omega F(\varphi)
=\frac{1}{4}\int_\Omega\int_\Omega J(x-y)\big(\varphi(x)-\varphi(y)\big)^2 dxdy+\int_\Omega F(\varphi).
\end{align*}
\end{thm}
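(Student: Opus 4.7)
The plan is to construct the weak solution via the three-level approximation scheme \eqref{Pb01}--\eqref{Pb07} described in the introduction. At the innermost level, with parameters $\e,\delta>0$ fixed, the density $\rho(\varphi)$ is replaced by the bounded smooth extension $\rhot(\varphi)$, the singular potential $F$ by a $C^p(\mathbb{R})$ regularization $F_\e$ coinciding with $F$ away from the singular points, and the two regularizing terms $\delta A^3\uvec$ and $-\delta\Delta\varphi$ are inserted together with the compensating nonlinear term $\tfrac12\rhot''(\varphi)m(\varphi)(\nabla\varphi\cdot\nabla\mu)\uvec$ in \eqref{Pb01}. The role of this last term is precisely to cancel, in the formal energy computation, the non-conservative contribution produced by a nonlinear $\rhot$ on the left-hand side of \eqref{enbaldes}, thereby yielding the clean dissipation identity
\begin{align*}
&\frac{d}{dt}\Big(\int_\Omega\tfrac12\rhot(\varphi)|\uvec|^2+E_\e(\varphi)+\tfrac{\delta}{2}\Vert\nabla\varphi\Vert^2\Big)
+\delta\Vert A^{3/2}\uvec\Vert^2+2\Vert\sqrt{\nu(\varphi)}D\uvec\Vert^2+\Vert\sqrt{m(\varphi)}\nabla\mu\Vert^2
=\langle\hvec,\uvec\rangle_{V_{div}},
\end{align*}
which then furnishes estimates uniform in both parameters except for the terms explicitly weighted by $\delta$.

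\textbf{Step I (Faedo--Galerkin, $\e,\delta$ fixed).} I would project \eqref{Pb01}--\eqref{Pb04} onto $\mathrm{span}\{\wvec_1,\dots,\wvec_n\}\subset D(A)$ for the velocity and onto $\mathrm{span}\{w_1,\dots,w_n\}\subset D(B)$ for the order parameter, obtaining an ODE system whose local solvability follows from the boundedness of $\rhot$ from below and the regularity of $F_\e$, $\nu$, $m$. The above energy identity holds at the Galerkin level and yields bounds on $\uvec_n$ in $L^\infty(G_{div})\cap L^2(D(A^{3/2}))$, on $\varphi_n$ in $L^\infty(V)$, and on $\mu_n$ in $L^2(V)$. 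Passage to the Galerkin limit uses Aubin--Lions (time-derivative bounds being obtained by comparison in the equations) and Lemma \ref{simplelem} for the variable-coefficient terms; the delicate compensating term is controlled because the $\delta A^{3/2}$-regularization forces $\uvec_n$ into $L^2(L^\infty)$.

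\textbf{Step II (Limit $\e\to 0$, $\delta$ fixed).} I would rewrite the energy inequality for the solution $(\uvec_\e,\varphi_\e,\mu_\e)$ and read off the same bounds as in Step I, with the crucial addition of a uniform bound $\int_\Omega F_\e(\varphi_\e)\in L^\infty(0,T)$. Testing \eqref{Pb04} with $F_{1\e}'(\varphi_\e)$ and using (A4)--(A8), exactly as in \cite{FG2}, yields a uniform $L^2(Q)$ bound on $F_{1\e}'(\varphi_\e)$; combined with standard properties of the regularization this forces the limit $\varphi_\delta$ to satisfy $|\varphi_\delta|<1$ a.e.~and $F'(\varphi_\delta)\in L^2$, so that $F_\e'(\varphi_\e)\to F'(\varphi_\delta)$ in $L^2$. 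All other nonlinear terms, including the compensating one, pass to the limit exactly as in Step I since $\delta$ is still keeping the high regularity of $\uvec_\delta$ and $\varphi_\delta$ intact.

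\textbf{Step III (Limit $\delta\to 0$).} Here one loses the $\delta$-weighted estimates and keeps only the bounds coming from the $\delta$-independent part of the energy functional: $\uvec_\delta$ bounded in $L^\infty(G_{div})\cap L^2(V_{div})$, $\varphi_\delta$ bounded in $L^\infty(H)\cap L^2(V)$, $\mu_\delta$ in $L^2(V)$, together with $(\rhot(\varphi_\delta)\uvec_\delta)_t$ in $L^{4/3}(D(A)')$ and $\varphi_{\delta,t}$ in $L^2(V')$ by comparison. Aubin--Lions gives strong convergence of $\varphi_\delta$ in $L^2(0,T;H)$ and of $\rhot(\varphi_\delta)\uvec_\delta$ in $L^2(0,T;G_{div})$. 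The main obstacle --- and the whole reason for the design of the scheme --- is to show that the limit $\varphi$ still satisfies $|\varphi|<1$ a.e.\ in $Q$, so that $\rhot(\varphi)=\rho(\varphi)$ and, most importantly, $\rhot''(\varphi)\equiv 0$ on the support of the limit, forcing the compensating term to vanish and recovering \eqref{weakfor1}. This bound is again obtained by the $F_1'$-testing argument of \cite{FG2}, uniform in $\delta$, which also guarantees that $F'(\varphi_\delta)\to F'(\varphi)$ in $L^2(Q)$ despite the singularity. The regularizer $\delta(A^{3/2}\uvec_\delta,A^{3/2}\wvec)$ vanishes by Cauchy--Schwarz since $\sqrt{\delta}A^{3/2}\uvec_\delta$ is bounded in $L^2(H)$; and the term $-\delta\Delta\varphi_\delta$ in $\mu_\delta$ vanishes in $V'$ using the uniform $L^2(V)$ bound. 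Finally, the energy inequality \eqref{eninPbor} is recovered by passing to the limit in the $\omega$-integrated form of the energy identity (lower semicontinuity of the convex terms plus Lemma \ref{simplelem} for the variable viscosity/mobility) and invoking Lemma \ref{Abelslemma}; the weak continuity $\uvec\in C_w([0,T];G_{div})$ follows from Lemma \ref{Strauss}.
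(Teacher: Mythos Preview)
Your three-step architecture coincides with the paper's, and the key mechanisms (energy identity at the Galerkin level, $F_\e'$-bounds forcing $|\varphi|<1$, vanishing of the $\delta$-regularizers, Lemma~\ref{Abelslemma} for the final energy inequality) are all correctly identified.

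One point of imprecision worth flagging: the artificial term $\tfrac12\rhot''(\varphi)m(\varphi)(\nabla\varphi\cdot\nabla\mu)\uvec$ disappears already at the end of Step~II, not in Step~III. Once you prove $|\varphi_\delta|<1$ a.e.\ (which you do state in Step~II), you have $\rhot''(\varphi_\delta)\equiv 0$ and the $\delta$-problem P$_\delta$ of \eqref{Pb11}--\eqref{Pb17} contains no compensating term at all; this is important because in Step~III you no longer have enough regularity on $\uvec_\delta$ and $\nabla\varphi_\delta$ to pass to the limit in that product directly. The reason you still need $|\varphi|<1$ in Step~III is different: it is required to identify the weak limit of $F'(\varphi_\delta)$ with $F'(\varphi)$ and to replace $\rhot$ by $\rho$, not to kill the artificial term. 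Also, the paper obtains the $F_\e'$-control by testing the Cahn--Hilliard equation \eqref{Pb03} with $\mathcal{N}_{\varphi_\e}\big(F_\e'(\varphi_\e)-\overline{F_\e'(\varphi_\e)}\big)$ and then invoking the Kenmochi--Niezg\'odka--Paw\l ow argument, rather than testing \eqref{Pb04} with $F_{1\e}'(\varphi_\e)$; both routes lead to the same $L^2(0,T;H)$ bound, but the former avoids a circularity with the $\mu_\e$-estimate. Finally, note that $\varphi_0$ must be approximated by smoother data $\varphi_{0\delta}\in V$ (cf.\ Lemma~\ref{lemaux}) so that $\tfrac{\delta}{2}\Vert\nabla\varphi_{0\delta}\Vert^2$ makes sense and vanishes as $\delta\to 0$; this is a small but necessary ingredient you did not mention.
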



\section{Proof of the main result}\label{proof of main res}

The proof will be carried out in three steps. In the first step we shall consider the two parameters
approximate problem P$_{\epsilon,\delta}$ given by \eqref{Pb01}--\eqref{Pb07} (with both $\epsilon,\delta>0$ fixed)
and shall implement a Faedo-Galerkin
approximation scheme to prove existence of a global weak solution $[\uvec_{\e,\delta},\varphi_{\e,\delta}]$ to P$_{\epsilon,\delta}$ satisfying an energy inequality.
In the second step we shall consider only $\delta>0$ fixed and deduce some uniform in $\e$ bounds for the weak solution
(that we can now denote by $[\uvec_\e,\varphi_\e]$)
to problem P$_{\epsilon,\delta}$ which will allow to pass to the limit as $\e\to 0$
in the weak formulation of P$_{\e,\delta}$ and to prove that the family of solutions $[\uvec_\e,\varphi_\e]$
converges to a solution (that now we can denote by $[\uvec_\delta,\varphi_\delta]$) to Problem P$_\delta$
given by
\begin{align}
&(\rho\uvec)_t+\mbox{div}(\rho\uvec\otimes\uvec)-2\mbox{div}\big(\nu(\varphi)D\uvec\big)+\delta A^3\uvec+\nabla\pi+\mbox{div}(\uvec\otimes\Jvect)
=\mu\nabla\varphi+\hvec,\label{Pb11}\\
&\mbox{div}(\uvec)=0,\label{Pb12}\\
&\varphi_t+\uvec\cdot\nabla\varphi=\mbox{div}(m(\varphi)\nabla\mu),\label{Pb13}\\
&\mu=a\varphi-J\ast\varphi+F'(\varphi)-\delta\Delta\varphi,\label{Pb14}\\
&\Jvect:=-\beta m(\varphi)\nabla\mu,\label{Pb15}\\
&\uvec=0,\qquad\frac{\partial\mu}{\partial\nvec}=\frac{\partial\varphi}{\partial\nvec}=0,\qquad\mbox{on }\partial\Omega,\label{Pb16}\\
&\uvec(0)=\uvec_0,\qquad\varphi(0)=\varphi_{0\delta},\label{Pb17}
\end{align}
in which the potential $F$ is singular.
Finally, in the third step we shall deduce uniform in $\delta$ estimates for
the family of solutions $[\uvec_\delta,\varphi_\delta]$
to Problem P$_\delta$ and pass to the limit as $\delta\to 0$ to prove that
$[\uvec_\delta,\varphi_\delta]$ converges to a solution to the original problem \eqref{Pbor1}--\eqref{Pbor8}.

In all the analysis we shall consider only the case $d=3$. If $d=2$ all the steps
of the proof of Theorem \ref{mainres} can be repeated (with strong convergences in stronger norms
in comparison with the 3D case). However, the result of Theorem \ref{mainres}
does not improve substantially in 2D (see Remark \ref{2Dcase}).

\subsection{Step I. Faedo-Galerkin approximation scheme.}\label{FG}

For problem \eqref{Pb01}--\eqref{Pb07}
we shall consider the general situation of a regular potential $F_\e$, that in this subsection we denote simply
by $F$, of arbitrary polynomial growth. Therefore, the assumptions
we make for $F$ are the following (cf. \cite{CFG})
\begin{description}
 \item[(RP1)] $F\in C^{2,1}_{loc}(\mathbb{R})$ and there exists $c_0>0$
     such that
             $$F^{\prime\prime}(s)+a(x)\geq c_0,\qquad\forall s\in\mathbb{R},\quad\mbox{a.e. }x\in\Omega.$$
 \item[(RP2)] $F\in C^2(\mathbb{R})$ and there exist $\hat{c}_1>0$,
    $\hat{c}_2>0$ and $p\geq 3$ such that
            $$F^{\prime\prime}(s)+a(x)\geq \hat{c}_1\vert s\vert^{p-2} - \hat{c}_2,
            \qquad\forall s\in\mathbb{R},\quad\mbox{a.e. }x\in\Omega.$$
 \item[(RP3)] There exist $\hat{c}_3>0$, $\hat{c}_4\geq0$ and $r\in(1,2]$
     such that
             $$|F^\prime(s)|^r\leq \hat{c}_3|F(s)|+\hat{c}_4,\qquad
             \forall s\in\mathbb{R}.$$
\end{description}

\begin{oss}
{\upshape
Since $F$ is bounded from below, it is easy to see that (RP3) implies that $F$
has polynomial growth of order $r^{\prime}$, where $%
r^{\prime}\in[2,\infty)$ is the conjugate index to $r$.
Namely, there exist $\hat{c}_5>0$ and $\hat{c}_6\geq 0$ such that
\begin{equation}  \label{growth}
|F(s)|\leq \hat{c}_5|s|^{r^{\prime}}+\hat{c}_6,\qquad\forall s\in\mathbb{R}.
\end{equation}
Observe that assumption (RP3) is fulfilled by a potential of arbitrary
polynomial growth.
}

\end{oss}

The assumptions on the kernel $J$ and on the mobility $m$ and on the viscosity $\nu$ are
the same as (A1), (A2) and (A3), respectively.

A weak solution to Problem P$_{\e,\delta}$ is a pair $[\uvec,\varphi]$ satisfying \eqref{Pb01}--\eqref{Pb07}
in the following sense
\begin{defn}
\label{wsdefn2}
Let $\uvec_0\in G_{div}$, $\varphi_0\in L^\infty(\Omega)$ with $F(\varphi_0)\in L^1(\Omega)$ and $0<T<+\infty$ be given.
A pair $[\uvec,\varphi]$ is a weak solution to \eqref{Pb01}-\eqref{Pb07} on $[0,T]$ corresponding to $[\uvec_0,\varphi_0]$
if
\begin{itemize}
\item  $\uvec$, $\varphi$ and $\mu$ satisfy
\begin{align}
&\uvec\in C_w([0,T];G_{div})\cap L^2(0,T;D(A^{3/2})),\label{regp1Pb0}\\
&\varphi\in L^\infty(0,T;V)\cap L^2(0,T;H^2(\Omega)),\label{regp2Pb0}\\
&\mu=a\varphi-J\ast\varphi+F'(\varphi)-\delta\Delta\varphi\in L^2(0,T;V),\label{regp3Pb0}\\
&(\rhot\uvec)_t\in L^\kappa(0,T;D(A^{3/2})'),\qquad\varphi_t\in L^2(0,T;V'),\label{regp4Pb0}
\end{align}
for some $\kappa>1$;
\item for every $\psi\in V$, every $\wvec\in D(A^{3/2})$ and for almost
any $t\in(0,T)$ we have
\begin{align}
&\big\langle (\rhot\uvec)_t,\wvec\big\rangle_{D(A^{3/2})}-(\rhot\uvec\otimes\uvec,D\wvec)+(2\nu(\varphi)D\uvec,D\wvec)
+\delta(A^{3/2}\uvec,A^{3/2}\wvec)\nonumber\\
&-(\uvec\otimes\Jvect,D\wvec)+\frac{1}{2}\big(\rhot''(\varphi)m(\varphi)(\nabla\varphi\cdot\nabla\mu)\uvec,\wvec\big)
=-(\varphi\nabla\mu,\wvec)+\langle \hvec,\wvec\rangle_{V_{div}},\label{wf1step1}\\
&\langle\varphi_t,\psi\rangle_V+(m(\varphi)\nabla\mu,\nabla\psi)=(\uvec,\varphi\nabla\psi),\label{wf2step1}
\end{align}
where $\Jvect=-\rhot'(\varphi) m(\varphi)\nabla\mu \in L^2(0,T;H)$;

\item the initial conditions $\uvec(0)=\uvec_0$, $\varphi(0)=\varphi_0$ hold.

\end{itemize}

\end{defn}
We are now ready to state the following
\begin{lem}
\label{existence1}
Let assumptions (A1)--(A3), (A9) and (RP1)--(RP3) be satisfied.
Let $\uvec_0\in G_{div}$ and $\varphi_0\in V$ such that $F(\varphi_0)\in L^1(\Omega)$.
Then, for every $T>0$ Problem \eqref{Pb01}--\eqref{Pb07}
admits a weak solution $[\uvec,\varphi]$ on $[0,T]$
corresponding to $[\uvec_0,\varphi_0]$
such that
\begin{align}
&\varphi\in L^\infty(0,T;L^p(\Omega)),\label{regp2Pbobis}
\end{align}
satisfying the following energy inequality
\begin{align}
&\int_\Omega\frac{1}{2}\rhot(\varphi(t))\uvec^2(t)+E(\varphi(t))+\frac{\delta}{2}\Vert\nabla\varphi(t)\Vert^2+
2\int_0^t\Vert\sqrt{\nu(\varphi)}D\uvec\Vert^2d\tau
+\delta\int_0^t\Vert A^{3/2}\uvec\Vert^2d\tau
\nonumber\\
&+\int_0^t\big\Vert\sqrt{m(\varphi)}\nabla\mu\big\Vert^2d\tau
\leq\int_\Omega\frac{1}{2}\rhot(\varphi_0)\uvec_0^2+E(\varphi_0)
+\frac{\delta}{2}\Vert\nabla\varphi_0\Vert^2+\int_0^t\langle \hvec,\uvec\rangle_{V_{div}} d\tau,
\label{enin}
\end{align}
for almost all $t\in (0,T)$,
where we have set
\begin{align*}
&E(\varphi):=\frac{1}{2}\Vert\sqrt{a}\varphi\Vert^2-\frac{1}{2}(\varphi,J\ast\varphi)+\int_\Omega F(\varphi)
=\frac{1}{4}\int_\Omega\int_\Omega J(x-y)\big(\varphi(x)-\varphi(y)\big)^2 dxdy+\int_\Omega F(\varphi).
\end{align*}

\end{lem}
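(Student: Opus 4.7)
The plan is to construct a weak solution to \eqref{Pb01}--\eqref{Pb07} via a Faedo-Galerkin procedure based on the orthonormal bases $\{\wvec_j\}\subset D(A)$ (Stokes eigenfunctions) for the velocity and $\{w_j\}\subset D(B)$ (Neumann Laplace eigenfunctions) for the phase field, with the initial data approximated by their spectral projections $P_n\uvec_0$ and $\Pi_n\varphi_0$. The $\delta$-regularizations $\delta A^3\uvec$ and $-\delta\Delta\varphi$ are crucial twice over: they make the finite-dimensional system well-posed and, more importantly, they provide the additional compactness needed to pass to the limit in the genuinely delicate nonlinear terms. I would first establish local existence of the Galerkin solutions, then derive a discrete energy identity which coincides with the finite-dimensional analog of \eqref{enin}, use it to get uniform bounds and hence global-in-time existence of the Galerkin solutions, and finally pass to the limit $n\to\infty$.

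\textbf{Galerkin ODE and local existence.} Writing $\uvec^n=\sum_{i=1}^n a_i(t)\wvec_i$ and $\varphi^n=\sum_{i=1}^n b_i(t) w_i$, I would project \eqref{Pb01} onto $\mathrm{span}\{\wvec_j\}_{j=1}^n$ and \eqref{Pb03}--\eqref{Pb04} onto $\mathrm{span}\{w_j\}_{j=1}^n$, with $\mu^n:=a\varphi^n-J\ast\varphi^n+F'(\varphi^n)-\delta\Delta\varphi^n$. To turn this into an ODE for $(a_i,b_i)$, I would expand
\[
(\rhot(\varphi^n)\uvec^n)_t=\rhot(\varphi^n)\uvec^n_t+\rhot'(\varphi^n)\varphi^n_t\,\uvec^n
\]
and replace $\varphi^n_t$ using the Cahn-Hilliard equation. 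The resulting system has the form $M(t)\dot a=F(a,b)$, $\dot b=G(a,b)$, where the mass matrix $M_{ij}(t)=\int_\Omega \rhot(\varphi^n)\wvec_i\cdot\wvec_j$ is symmetric positive definite since $\rhot\geq\rho_\ast>0$. Since the right-hand sides are continuous in $(a,b)$, Cauchy-Peano (or a standard Carathéodory argument) gives a solution on a maximal interval $[0,T_n)$.

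\textbf{Discrete energy estimate.} Testing the first equation with $\uvec^n$ and the second with $\mu^n$ (both legitimate since $\uvec^n\in\mathrm{span}\{\wvec_j\}$ and $\mu^n$ pairs through the weak formulation with functions in $\mathrm{span}\{w_j\}$), using $\mathrm{div}(\uvec^n)=0$ and integrating by parts, one arrives at the identity \eqref{enbaldes} with $F$ replaced by $F_\e$ and with the \emph{sign} of the offending cubic term flipped: precisely the artificial term $\tfrac{1}{2}\rhot''(\varphi^n)m(\varphi^n)(\nabla\varphi^n\cdot\nabla\mu^n)\uvec^n$ built into \eqref{Pb01} is what cancels the remainder coming from $\langle(\rhot\uvec^n)_t,\uvec^n\rangle-\tfrac{d}{dt}\int\tfrac{1}{2}\rhot|\uvec^n|^2$. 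The $-\delta\Delta\varphi^n$ contribution produces the extra $\tfrac{\delta}{2}\|\nabla\varphi^n\|^2$ term, and the $\delta A^3\uvec^n$ term gives $\delta\|A^{3/2}\uvec^n\|^2$. Grönwall applied to the resulting inequality (absorbing $\langle\hvec,\uvec^n\rangle$ via Poincaré) yields uniform bounds for $\uvec^n$ in $L^\infty(0,T;G_{div})\cap L^2(0,T;D(A^{3/2}))$, $\varphi^n$ in $L^\infty(0,T;V)$, $\sqrt{m(\varphi^n)}\nabla\mu^n$ in $L^2(0,T;H)$, and via (RP2) of $\varphi^n$ in $L^\infty(0,T;L^p(\Omega))$. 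These preclude blow-up, so $T_n=T$.

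\textbf{Passage to the limit and the main obstacle.} From testing \eqref{Pb04} with $-\Delta\varphi^n$ and standard elliptic regularity I would additionally obtain $\varphi^n\in L^2(0,T;H^2(\Omega))$ uniformly, and from the equations themselves the estimates $(\rhot(\varphi^n)\uvec^n)_t\in L^\kappa(0,T;D(A^{3/2})')$ and $\varphi^n_t\in L^2(0,T;V')$. Aubin-Lions then delivers strong convergence of $\uvec^n\to\uvec$ in $L^2(0,T;V_{div})$ (and, thanks to $\delta A^3\uvec$, also in $L^2(0,T;L^\infty(\Omega))$ in three dimensions via $D(A^{3/2})\hookrightarrow H^3\hookrightarrow W^{1,\infty}$) and strong convergence of $\varphi^n\to\varphi$ in $L^2(0,T;V)$. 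Passage to the limit in the transport, capillary and flux terms is then standard using Lemma \ref{simplelem}. The main obstacle is the artificial coupling $\rhot''(\varphi^n)m(\varphi^n)(\nabla\varphi^n\cdot\nabla\mu^n)\uvec^n$, in which both $\nabla\varphi^n$ and $\nabla\mu^n$ converge only weakly in $L^2(Q)$: here I would exploit the strong $L^2(0,T;L^\infty)$ convergence of $\uvec^n$ (paying the price of the $\delta A^3$ regularization) to rewrite the pairing as $(\rhot''(\varphi^n)m(\varphi^n)\nabla\varphi^n\cdot\uvec^n,\nabla\mu^n)$, whose first factor converges strongly in $L^2(Q)$ by Aubin-Lions on $\varphi^n$ together with Lemma \ref{simplelem}, while the second converges weakly. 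The energy inequality \eqref{enin} follows by passing to the limit in the discrete identity using weak lower semicontinuity of the convex quadratic parts and Lemma \ref{simplelem} for the variable-coefficient dissipation terms, weak continuity of $\uvec$ with values in $G_{div}$ comes from Lemma \ref{Strauss}, and the initial conditions are recovered in the usual way.
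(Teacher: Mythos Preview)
Your overall strategy matches the paper's, but there is a genuine gap in the discrete energy identity that the paper handles explicitly and you do not. At the Galerkin level, the Cahn--Hilliard equation reads $\varphi^n_t=\Pi_n\big[\mathrm{div}(m(\varphi^n)\nabla\mu^n)-\uvec^n\cdot\nabla\varphi^n\big]$, with the projector $\Pi_n$ onto $\mathrm{span}\{w_1,\dots,w_n\}$. Consequently,
\[
\big((\rhot(\varphi^n)\uvec^n)_t,\uvec^n\big)-\frac{d}{dt}\int_\Omega\frac{1}{2}\rhot(\varphi^n)|\uvec^n|^2
=\frac{1}{2}\int_\Omega\rhot'(\varphi^n)\,\Pi_n\big[\mathrm{div}(m\nabla\mu^n)-\uvec^n\cdot\nabla\varphi^n\big]\,|\uvec^n|^2,
\]
whereas the contributions from $\mathrm{div}(\rhot\uvec^n\otimes\uvec^n)$ and $\mathrm{div}(\uvec^n\otimes\Jvect^n)$ tested against $\uvec^n$ produce the \emph{unprojected} quantity $\tfrac{1}{2}\int\rhot'(\varphi^n)\big[\mathrm{div}(m\nabla\mu^n)-\uvec^n\cdot\nabla\varphi^n\big]|\uvec^n|^2$ (after the $\rhot''$ piece is killed by the artificial term). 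The two do \emph{not} cancel, and the leftover $\tfrac12\int\rhot'(\varphi^n)(\Pi_n-I)[\cdots]|\uvec^n|^2$ has no sign. The paper fixes this by inserting two further correction terms into the Galerkin momentum equation (the seventh and eighth terms of \eqref{FGapp1}), which restore the exact identity \eqref{FGenid} and are then shown to vanish as $n\to\infty$. Without them your ``discrete energy identity'' simply does not hold, and you lose the a~priori bounds. Relatedly, you should define $\mu^n$ as the \emph{projection} $\Pi_n(a\varphi^n-J\ast\varphi^n+F'(\varphi^n)-\delta\Delta\varphi^n)$; with your unprojected $\mu^n$, testing the projected Cahn--Hilliard equation yields $(m(\varphi^n)\nabla\mu^n,\nabla\Pi_n\mu^n)$ rather than $\|\sqrt{m(\varphi^n)}\nabla\mu^n\|^2$.

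A smaller but real issue: you invoke Aubin--Lions directly on $\uvec^n$ to get strong convergence in $L^2(0,T;V_{div})$ and even $L^2(0,T;L^\infty)$, but you only control $(\rhot(\varphi^n)\uvec^n)_t$, not $(\uvec^n)_t$. The paper therefore first obtains strong $L^2(Q)$ convergence of $\widetilde P_n(\rhot(\varphi^n)\uvec^n)$ via Aubin--Lions, combines it with the already-available strong convergence of $\varphi^n$, and then uses the Lions trick $\int\rhot(\varphi^n)|\uvec^n|^2=\int\widetilde P_n(\rhot(\varphi^n)\uvec^n)\cdot\uvec^n\to\int\rhot(\varphi)|\uvec|^2$ to upgrade to strong convergence of $\uvec^n$ (first a.e., then in $L^{6-\gamma}(Q)$). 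Your route to passing the limit in the $\rhot''$ term can be salvaged from there (the paper uses $\nabla\varphi^n\to\nabla\varphi$ strongly in $L^{10/3-\gamma}(Q)$ together with $\uvec^n\to\uvec$ strongly in $L^{6-\gamma}(Q)$ and $\nabla\mu^n\rightharpoonup\nabla\mu$ weakly in $L^2(Q)$), but the compactness for $\uvec^n$ must be obtained indirectly.
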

\begin{proof}
Let us assume in addition that $\varphi_0\in D(B)$.
Existence of a weak solution as well as the energy inequality
in the more general case of $\varphi_0\in V$ with $F(\varphi_0)\in L^1(\Omega)$
can be recovered by means of a density argument, in the same fashion as in \cite[Proof of Theorem 1]{CFG},
by exploiting in particular the fact that, due to (RP1), $F$ is a quadratic perturbation of a convex function.
We introduce the family $\{\wvec_j\}_{j\geq 1}$ of the eigenfunctions of the Stokes operator $A$ as a Galerkin
base in $V_{div}$ and the family $\{\psi_j\}_{j\geq 1}$ of the eigenfunctions of $B$
as a Galerkin base in $V$. We define the $n-$dimensional subspaces
$\mathcal{W}_n:=\langle \wvec_1,\cdots,\wvec_n\rangle$ and
$\Psi_n:=\langle\psi_1,\cdots,\psi_n\rangle$ and consider the
orthogonal projectors on these subspaces in $G_{div}$ and $H$,
respectively, i.e., $\widetilde{P}_n:=P_{\mathcal{W}_n}$ and
$P_n:=P_{\Psi_n}$.

We then look for three functions of the form
\begin{align}
&\uvec_n(t)=\sum_{j=i}^n a_j^{(n)}(t)\wvec_j,\qquad \varphi_n(t)=\sum_{j=1}^n b_j^{(n)}(t)\psi_j,
\qquad \mu_n(t)=\sum_{j=1}^n c_j^{(n)}(t)\psi_j,
\notag
\end{align}
that solve the following approximating problem
\begin{align}
&\Big(\big(\rhot(\varphi_n)\uvec_n\big)',\wvec_k\Big)-\big(\rhot(\varphi_n)\uvec_n\otimes \uvec_n,D\wvec_k\big)
+2\big(\nu(\varphi_n)D\uvec_n,D\wvec_k\big)+\delta(A^{3/2}\uvec_n,A^{3/2}\wvec_k)\nonumber\\
&-\int_\Omega \uvec_n\cdot (\Jvect_n\cdot\nabla)\wvec_k
+\frac{1}{2}\big(\rhot''(\varphi_n) m(\varphi_n)(\nabla\varphi_n\cdot\nabla\mu_n)\uvec_n,\wvec_k\big)\nonumber\\
&+\frac{1}{2}\Big(\rhot'(\varphi_n)\big(P_n(\uvec_n\cdot\nabla\varphi_n)-\uvec_n\cdot\nabla\varphi_n\big)\uvec_n,\wvec_k\Big)\nonumber\\
&+\frac{1}{2}\Big(\rhot'(\varphi_n)\big(\mbox{div}(m(\varphi_n)\nabla\mu_n)-P_n(\mbox{div}(m(\varphi_n)\nabla\mu_n))\big)\uvec_n,\wvec_k\Big)\nonumber\\
&=-(\varphi_n\nabla\mu_n,\wvec_k)+\langle\hvec_n,\wvec_k\rangle_{V_{div}},\quad k=1,\cdots,n
\label{FGapp1}\\
&(\varphi_n',\psi_k)+(m(\varphi_n)\nabla\mu_n,\nabla\psi_k)=(\uvec_n\varphi_n,\nabla\psi_k),\qquad k=1,\cdots,n
\label{FGapp2}\\
&\mu_n=P_n(a\varphi_n-J\ast\varphi_n+F'(\varphi_n)-\delta\Delta\varphi_n)
\label{FGapp3}\\
&\Jvect_n:=-\rhot'(\varphi_n)m(\varphi_n)\nabla\mu_n
\label{FGapp4}\\
&\varphi_n(0)=\varphi_{0n},\qquad \uvec_n(0)=\uvec_{0n},
\label{FGapp5}
\end{align}
where primes denote derivatives with respect to time, $\varphi_{0n}= P_n\varphi_0$, $\uvec_n(0)=\widetilde{P}_n\uvec_{0}$,
and $\hvec_n\in C([0,T];G_{div})$ such that $\hvec_n\to\hvec$ in $L^2(0,T;V_{div}')$.
Moreover, we assume that the function $\rhot$ satisfying \eqref{bdrho1} is fixed
 such that $\rhot\in C^{2,1}_{loc}(\mathbb{R})$.
By writing
\begin{align*}
&\big(\rhot(\varphi_n)\uvec_n\big)'=\rhot(\varphi_n)\uvec_n'+\rhot'(\varphi_n)\uvec_n\varphi_n'
\end{align*}
and observing that \eqref{FGapp2} can be written as
\begin{align*}
&\varphi_n'=P_n\big(\mbox{div}(m(\varphi_n)\nabla\mu_n)-\uvec_n\cdot\nabla\varphi_n\big),
\end{align*}
it is not difficult to see that solving this approximating problem is equivalent
to solving a system of ordinary differential equations in the $2n$ unknowns $a_j^{(n)}$, $b_j^{(n)}$
which can be reduced in normal form thanks to the fact that we have $\rhot(s)\geq\rho_\ast$, for all $s\in\mathbb{R}$,
with $\rho_\ast>0$. Indeed, this condition ensures that, for every fixed $n\in\mathbb{N}$,
the vectors $\sqrt{\rhot(\varphi_n)}\wvec_1,\cdots,\sqrt{\rhot(\varphi_n)}\wvec_n$ are linearly
independent and hence the Gram matrix $\{\big(\rhot(\varphi_n)\wvec_j,\wvec_k\big)\}_{j,k=1,..n}$,
which appears in the first term on the left hand side of \eqref{FGapp1} when it is written
explicitly in terms of the unknowns $a_j^{(n)}$,
is not singular.

By the Cauchy-Lipschitz theorem we know that there exists $T_n^\ast\in(0,+\infty]$ such that this system admits a unique maximal solution
$\boldsymbol{a}^{(n)}:=(a_1^{(n)},\cdots,a_n^{(n)})$, $\boldsymbol{b}^{(n)}:=(b_1^{(n)},\cdots,b_n^{(n)})$
on $[0,T_n^\ast)$ with $\boldsymbol{a}^{(n)},\boldsymbol{b}^{(n)}\in C^1([0,T_n^\ast);\mathbb{R}^n)$.

We now multiply \eqref{FGapp1} by $a_k^{(n)}$, \eqref{FGapp2} by $c_k^{(n)}$ and sum over $k=1,\cdots,n$ taking
\eqref{FGapp3} and \eqref{FGapp4} into account. In doing this we also observe that the following
identity holds
\begin{align}
&\Big(\big(\rhot(\varphi_n)\uvec_n\big)',\uvec_n\Big)
=\frac{d}{dt}\int_\Omega\frac{1}{2}\rhot(\varphi_n)\uvec_n^2+\frac{1}{2}\int_\Omega\rhot(\varphi_n)_t\uvec_n^2\nonumber\\
&=\frac{d}{dt}\int_\Omega\frac{1}{2}\rhot(\varphi_n)\uvec_n^2
+\frac{1}{2}\int_\Omega\rhot^\prime(\varphi_n)P_n\big(\mbox{div}(m(\varphi_n)\nabla\mu_n)-\uvec_n\cdot\nabla\varphi_n\big)\uvec_n^2.
\label{auxid1}
\end{align}
Moreover, on account of the incompressibility condition $\mbox{div}(\uvec_n)=0$ and
of the no-slip boundary condition $\uvec_n=0$ on $\partial\Omega$, we have
\begin{align}
&-\big(\rhot(\varphi_n)\uvec_n\otimes \uvec_n,D\uvec_n\big)=\int_\Omega \uvec_n\cdot(\uvec_n\cdot\nabla)(\rhot(\varphi_n)\uvec_n)
=\int_\Omega \uvec_n^2(\uvec_n\cdot\nabla\rhot(\varphi_n))\nonumber\\
&+\int_\Omega\rhot(\varphi_n)\uvec_n\cdot(\uvec_n\cdot\nabla)\uvec_n\nonumber
\end{align}
and the last term on the right hand side of this identity can be written as
\begin{align}
&\int_\Omega\rhot(\varphi_n)\uvec_n\cdot(\uvec_n\cdot\nabla)\uvec_n=\int_\Omega\frac{\uvec_n^2}{2}\rhot(\varphi_n)(\uvec_n\cdot\nabla)
=-\int_\Omega\frac{\uvec_n^2}{2}(\uvec_n\cdot\nabla\rhot(\varphi_n)).\nonumber
\end{align}
Therefore, we have
\begin{align}
&-\big(\rhot(\varphi_n)\uvec_n\otimes \uvec_n,D\uvec_n\big)=\int_\Omega(\uvec_n\cdot\nabla\rhot(\varphi_n))\frac{\uvec_n^2}{2}
=\frac{1}{2}\int_\Omega\rhot'(\varphi_n)(\uvec_n\cdot\nabla\varphi_n)\uvec_n^2.
\label{auxid2}
\end{align}
Furthermore we have
\begin{align}
&-\int_\Omega \uvec_n\cdot (\Jvect_n\cdot\nabla)\uvec_n=
\int_\Omega \uvec_n\cdot\mbox{div}(\uvec_n\otimes\Jvect)
=\int_\Omega\uvec_n\cdot\big[(\mbox{div}\Jvect_n)\uvec_n+(\Jvect_n\cdot\nabla)\uvec_n\big]\nonumber\\
&
=\int_\Omega\uvec_n^2\,\mbox{div}\Jvect_n+\int_\Omega\Jvect_n\cdot\nabla\Big(\frac{\uvec_n^2}{2}\Big)=
\int_\Omega\frac{1}{2}\uvec_n^2\,\mbox{div}\Jvect_n\nonumber\\
&=-\frac{1}{2}\int_\Omega\mbox{div}(\rhot'(\varphi_n)m(\varphi_n)\nabla\mu_n)\uvec_n^2\nonumber\\
&=-\frac{1}{2}\int_\Omega\rhot''(\varphi_n)m(\varphi_n)(\nabla\varphi_n\cdot\nabla\mu_n)\,\uvec_n^2
-\frac{1}{2}\int_\Omega\rhot'(\varphi_n)\mbox{div}(m(\varphi_n)\nabla\mu_n)\uvec_n^2
\label{auxid3}
\end{align}
By means of \eqref{auxid1}--\eqref{auxid3} after some easy computations we then arrive at the following identity
\begin{align}
&\frac{d}{dt}\Big(\int_\Omega\frac{1}{2}\rhot(\varphi_n)\uvec_n^2+E(\varphi_n)+\frac{\delta}{2}\Vert\nabla\varphi_n\Vert^2\Big)+
2\Vert\sqrt{\nu(\varphi_n)}D\uvec_n\Vert^2+\delta\Vert A^{3/2}\uvec_n\Vert^2\nonumber\\
&+\Vert\sqrt{m(\varphi_n)}\nabla\mu_n\Vert^2=\langle\hvec_n,\uvec_n\rangle_{V_{div}}.
\label{FGenid}
\end{align}
We can now integrate \eqref{FGenid} between 0 and $t$ (after splitting the term on the right hand side on
account of (A9))
and use (A2), (A3), \eqref{bdrho1}, (RP2) and the fact that, since $\varphi_0\in D(B)$, then we have $\varphi_{0n}\to\varphi_0$
 in $L^\infty(\Omega)$, to deduce the following bounds
\begin{align}
&\Vert\uvec_n\Vert_{L^\infty(0,T;G_{div})\cap L^2(0,T;V_{div})}\leq C,\label{bd1}\\
\delta^{1/2}&\Vert\uvec_n\Vert_{L^2(0,T;D(A^{3/2}))}\leq C,\label{bd2}\\
&\Vert\varphi_n\Vert_{L^\infty(0,T;L^p(\Omega))}\leq C,\label{bd3}\\
\delta^{1/2}&\Vert\varphi_n\Vert_{L^\infty(0,T;V)}\leq C,\label{bd4}\\
&\Vert\nabla\mu_n\Vert_{L^2(0,T;H)}\leq C ,\label{bd5}\\
&\Vert F(\varphi_n)\Vert_{L^\infty(0,T;L^1(\Omega))}\leq C,\label{bd6}
\end{align}
where henceforth in this proof we shall denote by $C$ a positive constant such that
\begin{align*}
& C=C\big(\Vert\uvec_0\Vert,\Vert\varphi_0\Vert_V,\Vert F(\varphi_0)\Vert_{L^1(\Omega)},\Vert\hvec\Vert_{L^2(0,T;V_{div}')}\big).
\end{align*}
The constant $C$ also depends on $F$, $J$, $\nu_\ast$, $m_\ast$, $\rho_\ast$ and $\Omega$.
Notice that these bounds hold at first instance with $T=T_n^\ast$. However, since we have
$\Vert\uvec_n(t)\Vert=|\boldsymbol{a}^{(n)}(t)|$ and $\Vert\varphi_n(t)\Vert=|\boldsymbol{b}^{(n)}(t)|$,
then the same bounds yield $T_n^\ast=+\infty$, i.e. $T>0$ can be fixed arbitrary and
the above bounds hold for every $T>0$.

Moreover, multiplying \eqref{FGapp3} by $-\Delta\varphi_n$ in $H$ and observing that $-\Delta\varphi_n$
belongs to the subspace $\Psi_n$, we have
\begin{align*}
&(\mu_n,-\Delta\varphi_n)=(\nabla\mu_n,\nabla\varphi_n)=(a\varphi_n-J\ast\varphi_n+F'(\varphi_n)-\delta\Delta\varphi_n, -\Delta\varphi_n)\nonumber\\
&=\big(\nabla\varphi_n,(a+F''(\varphi_n))\nabla\varphi_n+\varphi_n\nabla a-\nabla J\ast\varphi_n\big)+\delta\Vert\Delta\varphi_n\Vert^2\nonumber\\
&\geq c_0\Vert\nabla\varphi_n\Vert^2-2\Vert\nabla J\Vert_{L^1}\Vert\nabla\varphi_n\Vert\Vert\varphi_n\Vert+\delta\Vert\Delta\varphi_n\Vert^2\nonumber\\
&\geq\frac{c_0}{2}\Vert\nabla\varphi_n\Vert^2-c\Vert\varphi_n\Vert^2+\delta\Vert\Delta\varphi_n\Vert^2,
\end{align*}
and therefore, combining this estimate with
\begin{align*}
&(\nabla\mu_n,\nabla\varphi_n)\leq\frac{c_0}{4}\Vert\nabla\varphi_n\Vert^2+\frac{1}{c_0}\Vert\nabla\mu_n\Vert^2,
\end{align*}
we get
\begin{align}
&\Vert\nabla\mu_n\Vert^2\geq\frac{c_0^2}{4}\Vert\nabla\varphi_n\Vert^2-c\Vert\varphi_n\Vert^2+\delta c_0\Vert\Delta\varphi_n\Vert^2.
\label{est1}
\end{align}
By employing \eqref{est1}, and using a classical elliptic regularity result, from \eqref{bd5} and \eqref{bd3}
 we hence deduce the following estimates
\begin{align}
&\Vert\varphi_n\Vert_{L^2(0,T;V)}\leq C,\label{bd7}\\
\delta^{1/2}&\Vert\varphi_n\Vert_{L^2(0,T;H^2(\Omega))}\leq C.\label{bd8}
\end{align}
As far as the control of the sequence of the averages $\{\overline{\mu}_n\}$ is concerned, we have
\begin{align*}
&\Big|\int_\Omega\mu_n\Big|=|(a\varphi_n-J\ast\varphi_n+F'(\varphi_n)-\delta\Delta\varphi_n,1)|=|(F'(\varphi_n),1)|\nonumber\\
&\leq c\Vert F(\varphi_n)\Vert_{L^1(\Omega)}+c\leq C,
\end{align*}
due to the bound \eqref{bd6}. By means of Poincar\'{e}-Wirtinger inequality, this control and \eqref{bd5} imply the bound
\begin{align}
\Vert\mu_n\Vert_{L^2(0,T;V)}\leq C.\label{bd9}
\end{align}
Therefore, since $\delta>0$ is fixed, from the estimates obtained above
we deduce that there exist $\uvec$, $\varphi$ and $\mu$ such that, up to a subsequence we have
\begin{align}
&\uvec_n\rightharpoonup \uvec,\qquad\mbox{weakly$^\ast$ in } L^\infty(0,T;G_{div}),\label{weak1}\\
&\uvec_n\rightharpoonup \uvec,\qquad\mbox{weakly in } L^2(0,T;D(A^{3/2})),\label{weak2}\\
&\varphi_n\rightharpoonup \varphi,\qquad\mbox{weakly$^\ast$ in } L^\infty(0,T;V\cap L^p(\Omega)),\label{weak3}\\
&\varphi_n\rightharpoonup \varphi,\qquad\mbox{weakly in } L^2(0,T;H^2(\Omega)),\label{weak4}\\
&\mu_n\rightharpoonup \mu,\qquad\mbox{weakly in } L^2(0,T;V).\label{weak5}
\end{align}

As next step we need to derive some estimates for the two sequences of time derivatives
$\big\{\partial_t \widetilde{P}_n\big(\rhot(\varphi_n)\uvec_n\big)\big\}$ and $\{\varphi_n'\}$.
Let us begin with the first sequence.
Take $\wvec\in D(A^{3/2})$ and write $\wvec=\wvec_I+\wvec_{II}$, where $\wvec_I\in \mathcal{W}_n$
and $\wvec_{II}\in\mathcal{W}_n^{\bot}$
(recall that $\wvec_I$ and $\wvec_{II}$ are orthogonal in all Hilbert spaces $D(A^{s/2})$ for all $0\leq s\leq 3$).
From \eqref{FGapp1} we can write
\begin{align}
&\Big\langle\partial_t \widetilde{P}_n\big(\rhot(\varphi_n)\uvec_n\big),\wvec\Big\rangle_{D(A^{3/2})}
=\Big(\big(\rhot(\varphi_n)\uvec_n\big)',\wvec_I\Big)\nonumber\\
&=
\big(\rhot(\varphi_n)\uvec_n\otimes \uvec_n,D\wvec_I\big)
-2\big(\nu(\varphi_n)D\uvec_n,D\wvec_I\big)-\delta(A^{3/2}\uvec_n,A^{3/2}\wvec_I)\nonumber\\
&+\int_\Omega \uvec_n\cdot (\Jvect_n\cdot\nabla)\wvec_I
-\frac{1}{2}\big(\rhot''(\varphi_n) m(\varphi_n)(\nabla\varphi_n\cdot\nabla\mu_n)\uvec_n,\wvec_I\big)\nonumber\\
&-\frac{1}{2}\Big(\rhot'(\varphi_n)\big(P_n(\uvec_n\cdot\nabla\varphi_n)-\uvec_n\cdot\nabla\varphi_n\big)\uvec_n,\wvec_I\Big)\nonumber\\
&-\frac{1}{2}\Big(\rhot'(\varphi_n)\big(\mbox{div}(m(\varphi_n)\nabla\mu_n)-P_n(\mbox{div}(m(\varphi_n)\nabla\mu_n))\big)\uvec_n,\wvec_I\Big)\nonumber\\
&-(\varphi_n\nabla\mu_n,\wvec_I)+\langle\hvec_n,\wvec_I\rangle_{V_{div}}.\label{esttimeder}
\end{align}
We now estimate individually the terms on the right hand side of \eqref{esttimeder}. We have
\begin{align}
&|\big(\rhot(\varphi_n)\uvec_n\otimes \uvec_n,D\wvec_I\big)|\leq c\Vert\uvec_n\Vert^2\Vert\wvec_I\Vert_{H^3(\Omega)^3}
\leq c\Vert\uvec_n\Vert^2\Vert\wvec\Vert_{D(A^{3/2})},\label{rhoutd1}\\
&2\big|\big(\nu(\varphi_n)D\uvec_n,D\wvec_I\big)\big|\leq c\Vert\nabla\uvec_n\Vert\Vert\wvec_I\Vert_{H^1(\Omega)^3}\leq
c\Vert\nabla\uvec_n\Vert\Vert\wvec\Vert_{D(A^{1/2})},\\
&\delta|(A^{3/2}\uvec_n,A^{3/2}\wvec_I)|\leq\delta\Vert A^{3/2}\uvec_n\Vert\Vert\wvec_I\Vert_{D(A^{3/2})}
\leq\delta\Vert A^{3/2}\uvec_n\Vert\Vert\wvec\Vert_{D(A^{3/2})},\\
&\Big|\int_\Omega \uvec_n\cdot (\Jvect_n\cdot\nabla)\wvec_I\Big|
=\Big|\int_\Omega \uvec_n\cdot\big(\rhot'(\varphi_n)m(\varphi_n)\nabla\mu_n\cdot\nabla)\wvec_I\Big|\nonumber\\
&\leq c\Vert\uvec_n\Vert\Vert\nabla\mu_n\Vert\Vert\wvec_I\Vert_{H^3(\Omega)^3}\leq
c\Vert\uvec_n\Vert\Vert\nabla\mu_n\Vert\Vert\wvec\Vert_{D(A^{3/2})},\\
&\Big|\frac{1}{2}\big(\rhot''(\varphi_n) m(\varphi_n)(\nabla\varphi_n\cdot\nabla\mu_n)\uvec_n,\wvec_I\big)\Big|
\leq c\Vert\nabla\varphi_n\Vert_{L^{10/3}(\Omega)^3}\Vert\nabla\mu_n\Vert\Vert\uvec_n\Vert_{L^6(\Omega)^3}\Vert\wvec\Vert_{D(A)},\\
&\Big|\frac{1}{2}\Big(\rhot'(\varphi_n)\big(P_n(\uvec_n\cdot\nabla\varphi_n)-\uvec_n\cdot\nabla\varphi_n\big)\uvec_n,\wvec_I\Big)\Big|
\leq c\Vert\uvec_n\cdot\nabla\varphi_n\Vert\Vert\uvec_n\Vert\Vert\wvec_I\Vert_{H^2(\Omega)^3}\nonumber\\
&\leq c\Vert\uvec_n\Vert_{L^6(\Omega)^3}\Vert\nabla\varphi_n\Vert_{L^3(\Omega)^3}\Vert\uvec_n\Vert\Vert\wvec\Vert_{D(A)},\\
&\Big|\frac{1}{2}\Big(\rhot'(\varphi_n)\big(\mbox{div}(m(\varphi_n)\nabla\mu_n)-P_n(\mbox{div}(m(\varphi_n)\nabla\mu_n))\big)\uvec_n,\wvec_I\Big)\Big|
\nonumber\\
&\leq \big\Vert\mbox{div}(m(\varphi_n)\nabla\mu_n)\big\Vert_{V'}\big\Vert\rhot'(\varphi_n)\uvec_n\cdot\wvec_I\big\Vert_V\nonumber\\
&\leq c\Vert\nabla\mu_n\Vert\big(\Vert \uvec_n\Vert_{L^6(\Omega)^3}\Vert\wvec_I\Vert_{H^3(\Omega)^3}
+\Vert\nabla\uvec_n\Vert\Vert\wvec_I\Vert_{H^2(\Omega)^3}
+\Vert\nabla\varphi_n\Vert_{L^{10/3}(\Omega)^3}\Vert\uvec_n\Vert_{L^6(\Omega)^3}\Vert\wvec_I\Vert_{H^2(\Omega)^3}\big)\nonumber\\
&\leq c\Vert\nabla\mu_n\Vert\big(\Vert\nabla\uvec_n\Vert
+\Vert\nabla\varphi_n\Vert_{L^{10/3}(\Omega)^3}\Vert\uvec_n\Vert_{L^6(\Omega)^3}\big)\Vert\wvec\Vert_{D(A^{3/2})},\\
&|(\varphi_n\nabla\mu_n,\wvec_I)|\leq\Vert\varphi_n\Vert\Vert\nabla\mu_n\Vert\Vert\wvec_I\Vert_{H^2(\Omega)^3}\leq
\Vert\varphi_n\Vert\Vert\nabla\mu_n\Vert\Vert\wvec\Vert_{D(A)},
\label{rhoutd2}
\end{align}
where \eqref{bdrho1}, (A2) and (A3) have been used.
We now need the following
interpolation
embeddings
\begin{align*}
&L^\infty(0,T;L^2(\Omega))\cap L^2(0,T;H^3(\Omega))\hookrightarrow L^s(0,T;H^{6/s}(\Omega))
\hookrightarrow L^s(0,T; L^{2s/(s-4)}(\Omega)),
\end{align*}
for $4<s\leq\infty$, and
\begin{align*}
&L^\infty(0,T;H^1(\Omega))\cap L^2(0,T;H^2(\Omega))\hookrightarrow L^s(0,T;H^{1+2/s}(\Omega))
\hookrightarrow L^s(0,T; L^{6s/(s-4)}(\Omega)),
\end{align*}
for $4< s\leq\infty$.
 In particular we have
\begin{align*}
&L^\infty(0,T;L^2(\Omega))\cap L^2(0,T;H^3(\Omega))\hookrightarrow L^6(Q),\quad
L^\infty(0,T;H^1(\Omega))\cap L^2(0,T;H^2(\Omega))\hookrightarrow L^{10}(Q).
\end{align*}
These interpolaton embeddings and \eqref{bd1}--\eqref{bd5}, \eqref{bd8} entail the following bounds
\begin{align}
&\Vert\uvec_n\Vert_{L^6(Q)^3}\leq C_\delta,\qquad\Vert\nabla\uvec_n\Vert_{L^{18/5}(Q)^{3\times 3}}\leq C_\delta,
\label{bound1}\\
&\Vert\varphi_n\Vert_{L^{10}(Q)}\leq C_\delta,\qquad\Vert\nabla\varphi_n\Vert_{L^{10/3}(Q)^{3}}\leq C_\delta,\label{bound2}
\end{align}
where the second bound in \eqref{bound1} follows from the bound of
$\nabla\uvec_n$ in $L^s(0,T;H^{(6-s)/s}(\Omega)^{3\times 3})\hookrightarrow L^s(0,T;L^{6s/(5s-12)}(\Omega)^{3\times 3})$
 (for $s>12/5$; take $s=18/5$), and the second bound in \eqref{bound2} follows
 from the bound of $\nabla\varphi_n$ in $L^s(0,T;H^{2/s}(\Omega)^3)\hookrightarrow L^s(0,T;L^{6s/(3s-4)}(\Omega)^3)$
 (for $s>4/3$; take $s=10/3$).
Therefore, by means of estimates \eqref{rhoutd1}--\eqref{rhoutd2} and on account of
the first bound \eqref{bound1} and of the second bound \eqref{bound2},
from \eqref{esttimeder} we then deduce the following estimate (not uniform in $\delta$)
\begin{align}
&\big\Vert\partial_t\widetilde{P}_n\big(\rhot(\varphi_n)\uvec_n\big)\big\Vert_{L^{30/29}(0,T;D(A^{3/2})')}\leq C_\delta.
\label{bd10}
\end{align}

Now, we have
\begin{align*}
\nabla\big(\rhot(\varphi_n)\uvec_n\big)=\rhot(\varphi_n)\nabla\uvec_n+\rhot'(\varphi_n)\nabla\varphi_n\cdot\uvec_n,
\end{align*}
and hence from \eqref{bound1}, \eqref{bound2} we see that $\nabla\big(\rhot(\varphi_n)\uvec_n\big)$ is bounded in $L^{15/7}(Q)^3$ which implies that
\begin{align}
&\Vert\rhot(\varphi_n)\uvec_n\Vert_{L^{15/7}(0,T;W^{1,15/7}(\Omega)^3)}\leq C_\delta.\label{bd12}
\end{align}
To get strong convergence for the sequence of $\varphi_n$, let us first observe that, taking $\psi\in V$
and writing $\psi=\psi_I+\psi_{II}$, where $\psi_I\in\Psi_n$ and $\psi_{II}\in\Psi_n^{\bot}$
(recall that $\psi_I$ and $\psi_{II}$ are orthogonal in all Hilbert spaces $D(B^{r/2})$, for $0\le r\leq 1$), from
\eqref{FGapp2} we have
\begin{align}
&\langle\varphi_n',\psi\rangle_V=(\varphi_n',\psi_I)=-(\nabla\mu_n,\nabla\psi_I)+(\varphi_n\uvec_n,\nabla\psi_I),\nonumber
\end{align}
and, since $p\geq 3$
\begin{align}
&|(\varphi_n\uvec_n,\nabla\psi_I)|\leq\Vert\varphi_n\Vert_{L^{p}(\Omega)}\Vert\uvec_n\Vert_{L^6(\Omega)^3}\Vert\nabla\psi_I\Vert
\leq c\Vert\varphi_n\Vert_{L^{p}(\Omega)}\Vert\nabla\uvec_n\Vert\Vert\psi\Vert_V,\nonumber
\end{align}
whence we deduce that
\begin{align}
&\Vert\varphi_n'\Vert_{L^2(0,T;V')}\leq C.\label{bd11}
\end{align}
Since $\varphi_n$ is bounded in, e.g., $L^2(0,T;H^2(\Omega))$, by Aubin-Lions lemma we then deduce that
\begin{align}
&\varphi_n\to\varphi,\qquad\mbox{strongly in }L^2(0,T;H^{2-\gamma}(\Omega)),\qquad\gamma>0
\label{strong2}
\end{align}
and in particular we have
\begin{align}
&\varphi_n\to\varphi,\qquad\mbox{pointwise a.e. in }Q.\nonumber
\end{align}
Observe also that by Lebesgue's theorem we have
\begin{align}
&\rhot(\varphi_n)\to\rhot(\varphi),\qquad\mbox{strongly in }L^q(Q),\quad\forall q\in [2,\infty),
\label{strong5}
\end{align}
and the same strong convergence holds also for $1/\rhot(\varphi_n)$ to $1/\rhot(\varphi)$.

We now derive strong convergence for the sequence of $\uvec_n$. To this aim notice first
that from \eqref{bd12} we have
\begin{align}
&\big\Vert \widetilde{P}_n\big(\rhot(\varphi_n)\uvec_n\big)\big\Vert_{L^2(0,T;H^1(\Omega)^3)}\leq C_\delta.\label{bd13}
\end{align}
From \eqref{bd10} and \eqref{bd13}, again by means of Aubin-Lions lemma we therefore deduce that
\begin{align*}
&\widetilde{P}_n\big(\rhot(\varphi_n)\uvec_n\big)\to\overline{\rhot(\varphi)\uvec},\qquad\mbox{strongly in }L^2(Q)^3,
\end{align*}
for some $\overline{\rhot(\varphi)\uvec}\in L^2(Q)^3$.
But, from \eqref{strong5} and the weak convergence available for $\uvec_n$ we have
 $\rhot(\varphi_n)\uvec_n\rightharpoonup\rhot(\varphi)\uvec$, weakly in $L^2(Q)^3$ (also in $L^{6-\gamma}(Q)^3$, for $\gamma>0$),
 which implies that $\widetilde{P}_n\big(\rhot(\varphi_n)\uvec_n\big)\rightharpoonup\rhot(\varphi)\uvec$, weakly in $L^2(Q)^3$.
Therefore, we deduce that $\overline{\rhot(\varphi)\uvec}=\rhot(\varphi)\uvec$ and so
\begin{align}
&\widetilde{P}_n\big(\rhot(\varphi_n)\uvec_n\big)\to\rhot(\varphi)\uvec,\qquad\mbox{strongly in }L^2(Q)^3.\label{bd14}
\end{align}
In particular, from \eqref{bd10}, \eqref{bd14} and from \eqref{bd11}, \eqref{weak3} there follows that
up to a subsequence we have
\begin{align}
&\partial_t\widetilde{P}_n\big(\rhot(\varphi_n)\uvec_n\big)\rightharpoonup\big(\rhot(\varphi)\uvec\big)_t,\qquad
\mbox{weakly in }L^{30/29}(0,T;D(A^{3/2})'),\label{weak6}\\
&\varphi_n'\rightharpoonup\varphi_t,\qquad\mbox{weakly in }L^2(0,T;V').\label{weak7}
\end{align}
With \eqref{bd14} and strong convergence for the sequence of $\varphi_n$ at disposal we can
now establish strong convergence for the sequence of $\uvec_n$ by using a classical
argument (cf. \cite{A1}, \cite[Section 2.1]{Lions}), which we report for the reader's convenience.
Indeed, we have
\begin{align}
&\int_0^T\int_\Omega\rhot(\varphi_n)\uvec_n^2=\int_0^T\int_\Omega\widetilde{P}_n\big(\rhot(\varphi_n)\uvec_n\big)\cdot\uvec_n
\to\int_0^T\int_\Omega\rhot(\varphi)\uvec^2,\label{trick}
\end{align}
which means that the $L^2(Q)^3$-norm of $\sqrt{\rhot(\varphi_n)}\uvec_n$ converges to the
$L^2(Q)^3$-norm of $\sqrt{\rhot(\varphi)}\uvec$. Since, thanks to strong convergence for the sequence
of $\varphi_n$, we have also
$\sqrt{\rhot(\varphi_n)}\uvec_n\rightharpoonup\sqrt{\rhot(\varphi_n)}\uvec$, weakly in $L^2(Q)^3$,
then $\sqrt{\rhot(\varphi_n)}\uvec_n\to\sqrt{\rhot(\varphi_n)}\uvec$, strongly in $L^2(Q)^3$.
Finally, notice that due to \eqref{strong5} we have
$\rhot(\varphi_n)^{-1/2}\to\rhot(\varphi)^{-1/2}$, strongly in $L^q(Q)$ (for all $q\in[2,\infty)$) and therefore
we get $\uvec_n\to\uvec$ strongly in $L^{2-\gamma}(Q)^3$ ($\gamma>0$), which implies that, up to a subsequence we have
 $\uvec_n\to\uvec$
pointwise a.e. in $Q$. Since the sequence of $\uvec_n$ is bounded in $L^6(Q)^3$, then
\begin{align}
&\uvec_n\to\uvec,\qquad
\mbox{strongly in } L^{6-\gamma}(Q)^3.\label{strong3}
\end{align}
Furthermore, from \eqref{strong2} and the second \eqref{bound2} we obtain
\begin{align}
&\nabla\varphi_n\to\nabla\varphi,\qquad\mbox{strongly in } L^{10/3-\gamma}(Q)^3.\label{strong4}
\end{align}
Now, the strong convergences \eqref{strong3}, \eqref{strong4}, \eqref{strong5}, together
with the weak convergences \eqref{weak1}--\eqref{weak5}, \eqref{weak6}, \eqref{weak7}
and with the strong convergences $\rhot''(\varphi_n)\to\rhot''(\varphi)$
and $\rhot'(\varphi_n)\to\rhot'(\varphi)$ in $L^q(Q)$, for all $q<\infty$
allow to pass to the limit in the approximate problem \eqref{FGapp1}--\eqref{FGapp5}
and to recover the weak formulation \eqref{wf1step1}, \eqref{wf2step1}.
In particular, observe that we have
\begin{align*}
&\uvec_n\cdot\nabla\varphi_n\to\uvec\cdot\nabla\varphi,\qquad\mbox{strongly in } L^2(Q),
\end{align*}
(also strongly in $L^{15/7-\gamma}(Q)$, for all $\gamma>0$, due to \eqref{strong3}, \eqref{strong4}) and therefore
\begin{align*}
&P_n(\uvec_n\cdot\nabla\varphi_n)\to\uvec\cdot\nabla\varphi,\qquad\mbox{strongly in } L^2(Q).
\end{align*}
Hence, when we pass to the limit the contribution of the seventh term on the left hand side of \eqref{FGapp1}
converges to zero. Moreover, we have
\begin{align*}
&\mbox{div}(m(\varphi_n)\nabla\mu_n)\rightharpoonup\mbox{div}(m(\varphi)\nabla\mu),\qquad\mbox{weakly in }L^2(0,T;V'),
\end{align*}
and therefore also
\begin{align*}
&P_n\big(\mbox{div}(m(\varphi_n)\nabla\mu_n)\big)\rightharpoonup\mbox{div}(m(\varphi)\nabla\mu),\qquad\mbox{weakly in }L^2(0,T;V').
\end{align*}
In addition, for all $\wvec\in D(A^{3/2})$ it is easy to see that we have
\begin{align*}
&\rhot'(\varphi_n)\uvec_n\wvec\to\rhot'(\varphi)\uvec\wvec,\qquad\mbox{strongly in }L^2(0,T;V)
\end{align*}
(also strongly in $L^{15/7-\gamma}(0,T;W^{1,15/7-\gamma}(\Omega))$, due to \eqref{strong3}, \eqref{strong4}), and therefore also
the contribution of the last term on the left hand side of \eqref{FGapp1}
converges to zero when passing to the limit.

We now claim that $\uvec\in C_w([0,T];G_{div})$. To prove this claim, first
 observe that from \eqref{bd1} and \eqref{bd10}
we deduce the boundedness of
\begin{align*}
&\widetilde{P}_n\big(\rhot(\varphi_n)\uvec_n\big)\quad\mbox{in }L^\infty(0,T;G_{div})\mbox { and }\\
&\widetilde{P}_n\big(\rhot(\varphi_n)\uvec_n\big)\quad\mbox{in }W^{1,30/29}(0,T;D(A)')\hookrightarrow C([0,T];D(A)').
\end{align*}
Therefore, thanks to Lemma \ref{Strauss} and on account of \eqref{bd14} we infer
that
\begin{align}
&\zvec:=\rhot(\varphi)\uvec\in C_w([0,T];L^2(\Omega)^d).\nonumber
\end{align}
Moreover, since $\varphi\in C([0,T];H)$, then, by \eqref{bdrho1} we have
$\rhot(\varphi)\in C([0,T];H)$.
Now, let $\uvec$ be the representative, in the equivalence class of $\uvec$, given by $\uvec=\rhot(\varphi)^{-1}\zvec$.
Due to the boundedness of $\rhot(\varphi)$ and noting in particular that we have also $\rhot(\varphi)^{-1}\in C([0,T];H)$,
it is now easy to get the desired claim.

Let us now prove that the initial conditions \eqref{Pb07} are satisfied.
The argument to see that $\varphi(0)=\varphi_0$ holds, by integrating \eqref{FGapp2} in time
between 0 and $t$ and using the strong convergence $\varphi_n\to\varphi$ in $C([0,T];H)$
(which follows from \eqref{bd4} and \eqref{bd11}) is quite standard and we omit the details.
We just give some details on the argument to prove that $\uvec(0)=\uvec_0$.
Let us then take $\wvec\in D(A^{3/2})$
and set $\wvec^N:=\sum_{k=1}^N \alpha_k\wvec_k$, for $N\leq n$, where
$\alpha_k=(\wvec,\wvec_k)$. We multiply \eqref{FGapp1} by $\alpha_k$,
sum on $k$ from $1$ to $N\leq n$, and then integrate the resulting identity
between $0$ and $t$ to get
\begin{align}
&\Big(\rhot(\varphi_n(t))\uvec_n(t),\wvec^N\Big)-
\Big(\rhot(\varphi_{0n})\uvec_{0n},\wvec^N\Big)
-\int_0^t\big(\rhot(\varphi_n)\uvec_n\otimes \uvec_n,D\wvec^N\big)d\tau\nonumber\\
&+2\int_0^t\big(\nu(\varphi_n)D\uvec_n,D\wvec^N\big)d\tau+\delta\int_0^t(A^{3/2}\uvec_n,A^{3/2}\wvec^N)d\tau\nonumber\\
&-\int_0^t\int_\Omega \uvec_n\cdot (\Jvect_n\cdot\nabla)\wvec^N
+\frac{1}{2}\int_0^t\big(\rhot''(\varphi_n) m(\varphi_n)(\nabla\varphi_n\cdot\nabla\mu_n)\uvec_n,\wvec^N\big)d\tau\nonumber\\
&+\frac{1}{2}\int_0^t\Big(\rhot'(\varphi_n)\big(P_n(\uvec_n\cdot\nabla\varphi_n)-\uvec_n\cdot\nabla\varphi_n\big)\uvec_n,\wvec^N\Big)d\tau\nonumber\\
&+\frac{1}{2}\int_0^t\Big(\rhot'(\varphi_n)\big(\mbox{div}(m(\varphi_n)\nabla\mu_n)-P_n(\mbox{div}(m(\varphi_n)\nabla\mu_n))\big)
\uvec_n,\wvec^N\Big)d\tau\nonumber\\
&=-\int_0^t(\varphi_n\nabla\mu_n,\wvec^N)d\tau+\int_0^t\langle\hvec_n,\wvec^N\rangle d\tau\label{idic}
\end{align}
Let us now multiply \eqref{idic} by $\chi\in C^\infty_0(0,T)$ and integrate again in time between 0 and $T$.
We then pass to the limit in the resulting identity first as $n\to\infty$, by using \eqref{bd14} and the weak/strong convergences
obtained above, and then as $N\to\infty$. We then perform a similar computation on the weak formulation \eqref{wf1step1}
(with the same test function $\wvec$) by first integrating it in time between 0 and $t$, by multiplying then
the resulting identity by $\chi$
and integrating again in time between 0 and $T$ .
Comparing the two identities obtained in this way we are led to
\begin{align*}
&\Big(\rhot(\varphi_{0})\uvec_{0},\wvec\Big)\int_0^T\chi(t)dt=\Big(\rhot(\varphi(0))\uvec(0),\wvec\Big)\int_0^T\chi(t)dt,
\end{align*}
which yields $\rhot(\varphi(0))\uvec(0)=\rhot(\varphi_{0})\uvec_{0}$ and therefore $\uvec(0)=\uvec_{0}$, since
$\varphi(0)=\varphi_{0}$.

We now want to show the energy inequality \eqref{enin}.
To this aim we integrate \eqref{FGenid} in time  between 0 and $t$ and get
\begin{align}
&\int_\Omega\frac{1}{2}\rhot(\varphi_n(t))\uvec_n^2(t)+E(\varphi_n(t))+\frac{\delta}{2}\Vert\nabla\varphi_n(t)\Vert^2+
\int_0^t\Big(2\Vert\sqrt{\nu(\varphi_n)}D\uvec_n\Vert^2+\delta\Vert A^{3/2}\uvec_n\Vert^2\nonumber\\
&+\Vert\sqrt{m(\varphi_n)}\nabla\mu_n\Vert^2\Big)d\tau=
\int_\Omega\frac{1}{2}\rhot(\varphi_{0n})\uvec_{0n}^2+E(\varphi_{0n})+\frac{\delta}{2}\Vert\nabla\varphi_{0n}\Vert^2+
\int_0^t\langle\hvec_n,\uvec_n\rangle_{V_{div}} d\tau,
\label{FGenidint}
\end{align}
for all $t\in[0,T]$.
Then, we pass to the limit in \eqref{FGenidint} taking the weak/strong convergences above into account
and using the weak lower semicontinuity of norms.
In particular, as far as the term containing the variable viscosity is concerned, we observe that
$D\uvec_n\rightharpoonup D\uvec$ weakly in $L^{18/5}(Q)^{3\times 3}$,
while
$\sqrt{\nu(\varphi_n)} \rightharpoonup \sqrt{\nu(\varphi)}$ strongly in $L^q(Q)$ for all $2\leq q<\infty$.
Hence $\sqrt{\nu(\varphi_n)}D\uvec_n \rightharpoonup \sqrt{\nu(\varphi)}D\uvec$ weakly in $L^{18/5-\gamma}(Q)^{3\times 3}$
($\gamma>0$) and therefore also weakly in $L^2(Q)^{3\times 3}$. By also employing Lemma \ref{simplelem} to pass to the liminf
in the term containing the variable mobility we deduce
\begin{align}
&\int_0^t 2\Vert\sqrt{\nu(\varphi)}D\uvec\Vert^2 d\tau\leq\liminf_{n\to\infty}\int_0^t 2\Vert\sqrt{\nu(\varphi_n)}D\uvec_n\Vert^2 d\tau,
\nonumber\\
&\int_0^t \Vert\sqrt{m(\varphi)}\nabla\mu\Vert^2 d\tau\leq\liminf_{n\to\infty}\int_0^t \Vert\sqrt{m(\varphi_n)}\nabla\mu_n\Vert^2 d\tau.
\nonumber
\end{align}
Concerning the first term on the left hand side of \eqref{FGenidint}, in view
of \eqref{strong3} and \eqref{strong5} which imply that
\begin{align}
&\rhot(\varphi_n)\uvec_n^2\to\rhot(\varphi)\uvec^2,\qquad\mbox{ strongly in }L^{3-\gamma}(Q),\label{strong6}
\end{align}
we have that this term converges for almost all $t\in(0,T)$ to the first term on the left hand side of \eqref{enin}.
The fact that the term in the $L^2-$norm of $\nabla\varphi_n$ converges, for almost all $t\in(0,T)$, to the third
term on the left hand side of \eqref{enin} is a consequence of \eqref{strong4}. The passage to the limit in the other terms
in \eqref{FGenidint} is straightforward and this concludes the proof of \eqref{enin}, which holds
for almost all $t\in(0,T)$.

Finally, we deduce an auxiliary energy inequality which shall turn out to be useful
to deduce the energy inequality \eqref{eninPbor}.
Let us multiply \eqref{FGenid} by $\omega\in W^{1,1}(0,T)$, with $\omega(T)=0$ and $\omega\geq 0$. We obtain
\begin{align}
&\Big(\int_\Omega\frac{1}{2}\rhot(\varphi_{0n})\uvec_{0n}^2+E(\varphi_{0n})+\frac{\delta}{2}\Vert\nabla\varphi_{0n}\Vert^2\Big)\omega(0)+
\int_0^T\Big(\int_\Omega\frac{1}{2}\rhot(\varphi_n)\uvec_n^2+E(\varphi_n)+\frac{\delta}{2}\Vert\nabla\varphi_n\Vert^2\Big)\omega'(\tau)d\tau\nonumber\\
&=\int_0^T\Big(2\Vert\sqrt{\nu(\varphi_n)}D\uvec_n\Vert^2+\delta\Vert A^{3/2}\uvec_n\Vert^2
+\Vert\sqrt{m(\varphi_n)}\nabla\mu_n\Vert^2\Big)\omega(\tau)d\tau-\int_0^T\langle\hvec_n,\uvec_n\rangle_{V_{div}}
\omega(\tau)d\tau.
\label{FGenidomega}
\end{align}
In order to pass to the limit in the term containing the functional $E(\varphi_n)$
on the left hand side of \eqref{FGenidomega} we need to prove that
\begin{align}
&\int_0^T\omega'(\tau)d\tau\int_\Omega F(\varphi_n)\to\int_0^T\omega'(\tau)d\tau\int_\Omega F(\varphi).
\label{claim}
\end{align}
To this aim observe that, thanks to the following compact and continuous embeddings
\begin{align*}
&L^2(0,T;H^2(\Omega))\cap H^1(0,T;V')\hookrightarrow\hookrightarrow L^2(0,T;H^s(\Omega))\hookrightarrow L^2(0,T;C(\overline{\Omega})),
\qquad 3/2<s<2,
\end{align*}
and to the bounds \eqref{bd8} and \eqref{bd11} we have
\begin{align*}
&\varphi_n(\tau)\to\varphi(\tau),\qquad \mbox{ in }\:\: C(\overline{\Omega}),
\end{align*}
for almost all $\tau\in(0,T)$. On the other hand, due to the energy identity \eqref{FGenidint}
the sequence of integrals $\int_\Omega F(\varphi_n(\tau))$ is uniformly bounded with respect
to $n\in\mathbb{N}$ and for a.a. $\tau\in(0,T)$ (cf. \eqref{bd6}).
Hence, \eqref{claim} follows immediately by applying Lebesgue's theorem.


Passing to the limit in \eqref{FGenidomega} and employing weak/strong convergences
for $\uvec_n$, $\varphi_n$, $\mu_n$ (recall, in particular, \eqref{strong4} and \eqref{strong6}),
and weak lower semicontinuity of norms
in the same fashion as done for the proof of \eqref{enin} we get
\begin{align}
&\Big(\int_\Omega\frac{1}{2}\rhot(\varphi_{0})\uvec_{0}^2+E(\varphi_{0})+\frac{\delta}{2}\Vert\nabla\varphi_{0}\Vert^2\Big)\omega(0)+
\int_0^T\Big(\int_\Omega\frac{1}{2}\rhot(\varphi)\uvec^2+E(\varphi)+\frac{\delta}{2}\Vert\nabla\varphi\Vert^2\Big)\omega'(\tau)d\tau\nonumber\\
&\geq\int_0^T\Big(2\Vert\sqrt{\nu(\varphi)}D\uvec\Vert^2+\delta\Vert A^{3/2}\uvec\Vert^2
+\Vert\sqrt{m(\varphi)}\nabla\mu\Vert^2\Big)\omega(\tau)d\tau-\int_0^T\langle\hvec,\uvec\rangle_{V_{div}}
\omega(\tau)d\tau,
\label{eninomega}
\end{align}
for all $\omega\in W^{1,1}(0,T)$, with $\omega(T)=0$ and $\omega\geq 0$.

\end{proof}

\subsection{Step II. Limit as $\e\to 0$.}\label{lim-eps}

Next, we consider problem \eqref{Pb11}--\eqref{Pb17}
where now the potential $F$ is singular and
$\delta>0$ is still fixed (in this subsection we shall denote the initial
datum for $\varphi$ simply by $\varphi_0$, instead of $\varphi_{0\delta}$).
We aim to prove that this problem admits a weak solution by approximating it with a sequence
of problems $P_{\epsilon,\delta}$ of the form \eqref{Pb01}--\eqref{Pb07} with regular potentials $F_\e$.
More precisely, we prove the following
\begin{lem}
\label{existence2}
Let assumptions (A1)--(A9) be satisfied for some fixed integer $p\geq 3$. Let $\uvec_0\in G_{div}$, $\varphi_0\in V\cap L^\infty(\Omega)$
 such that $F(\varphi_0)\in L^1(\Omega)$
and $|\overline{\varphi}_0|<1$. Then, for every $T>0$ Problem \eqref{Pb11}--\eqref{Pb17}
admits a weak solution $[\uvec,\varphi]$ on $[0,T]$ corresponding to $[\uvec_0,\varphi_0]$ such that
\begin{align}
&\uvec\in C_w([0,T];G_{div})\cap L^2(0,T;D(A^{3/2})),\label{regp1Pb1}\\
&\varphi\in L^\infty(0,T;V\cap L^p(\Omega))\cap L^2(0,T;H^2(\Omega)),\label{regp2Pb1}\\
&\mu\in L^2(0,T;V),\label{regp3Pb1}
\end{align}
satisfying the bound
\begin{align}
&|\varphi(x,t)|<1,\qquad\mbox{for a.e. }(x,t)\in Q,\label{pointbdphi}
\end{align}
 and satisfying the energy inequality \eqref{enin} (with $\rho$ in place of $\rhot$)
for almost all $t\in(0,T)$.
\end{lem}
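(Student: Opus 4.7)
The plan is to construct the solution to \eqref{Pb11}--\eqref{Pb17} as the $\e\to 0$ limit (with $\delta>0$ fixed) of the family of weak solutions $[\uvec_\e,\varphi_\e]$ to $P_{\e,\delta}$ produced by Lemma \ref{existence1}. Following the classical recipe of \cite{EG,B,FG2}, I would choose regular potentials $F_\e\in C^{2,1}_{loc}(\mathbb{R})$ obtained by extending $F$ outside $(-1+\e,1-\e)$ via a Taylor-type prolongation based on (A4)--(A6), arranged so that (RP1)--(RP3) hold uniformly in $\e$ (with the constant $c_0$ of (RP1) coinciding with the one in (A7)), $F_\e(s)\to F(s)$ pointwise on $(-1,1)$, $F_\e(s)\to+\infty$ uniformly on compacts of $\mathbb{R}\setminus[-1,1]$, and $F_\e(\varphi_0)\to F(\varphi_0)$ in $L^1(\Omega)$. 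Lemma \ref{existence1} applied with $F_\e$ and initial datum $\varphi_{0,\e}\equiv\varphi_0$ then delivers, for each $\e>0$, a weak solution $[\uvec_\e,\varphi_\e]$ to $P_{\e,\delta}$ with associated $\mu_\e$, satisfying the energy inequality \eqref{enin} and its integrated form \eqref{eninomega}, with $F_\e$ in place of $F$.

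Next I would read off, from \eqref{enin} with $\delta>0$ fixed, $\e$-uniform bounds of exactly the type \eqref{bd1}--\eqref{bd9}: $\uvec_\e$ in $L^\infty(0,T;G_{div})\cap L^2(0,T;D(A^{3/2}))$; $\varphi_\e$ in $L^\infty(0,T;V)\cap L^2(0,T;H^2(\Omega))$; $\mu_\e$ in $L^2(0,T;V)$; and $F_\e(\varphi_\e)$ in $L^\infty(0,T;L^1(\Omega))$, together with the time-derivative bounds of $\rhot(\varphi_\e)\uvec_\e$ and $\varphi_\e$ obtained as in Step I. Aubin--Lions then yields $\varphi_\e\to\varphi$ strongly in $L^2(0,T;H^{2-\gamma}(\Omega))$ and a.e.\ in $Q$, and the identity \eqref{trick} reproduced verbatim gives $\uvec_\e\to\uvec$ strongly in $L^{6-\gamma}(Q)^3$; standard weak-$\ast$ compactness produces limits $\uvec$, $\varphi$, $\mu$ with the regularity \eqref{regp1Pb1}--\eqref{regp3Pb1}.

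The central and most delicate step is the pointwise bound \eqref{pointbdphi}. By the uniform $L^\infty(0,T;L^1(\Omega))$ bound on $F_\e(\varphi_\e)$, the a.e.\ convergence $\varphi_\e\to\varphi$, and the fact that $F_\e\to F$ pointwise with $F(s)=+\infty$ for $|s|\geq 1$, Fatou's lemma forces $F(\varphi)\in L^\infty(0,T;L^1(\Omega))$ and hence $|\varphi|\leq 1$ a.e.\ in $Q$. To upgrade to strict inequality I would rewrite the identity \eqref{Pb04} as
\begin{equation*}
F_\e'(\varphi_\e)=\mu_\e-a\varphi_\e+J\ast\varphi_\e+\delta\Delta\varphi_\e,
\end{equation*}
whose right-hand side is uniformly bounded in $L^2(Q)$ thanks to the $\e$-uniform bounds on $\mu_\e$ and on $\delta^{1/2}\Delta\varphi_\e$. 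If $E:=\{(x,t):|\varphi(x,t)|=1\}$ had positive measure, then on $E$ the a.e.\ convergence $\varphi_\e\to\pm 1$ combined with (A8) would force $|F_\e'(\varphi_\e)|\to+\infty$ a.e.\ on $E$, contradicting the uniform $L^2$-control via Fatou; hence $|\varphi|<1$ a.e. The same $L^2$-bound, coupled with the uniform convergence $F_\e'\to F'$ on compacts of $(-1,1)$ and a Vitali-type equi-integrability argument, also identifies the weak $L^2(Q)$-limit of $F_\e'(\varphi_\e)$ with $F'(\varphi)$, so that \eqref{Pb14} is recovered in the limit.

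Finally, since $\rhot$ coincides with $\rho$ on $[-1,1]$ and $\rho$ is affine, the bound $|\varphi|<1$ forces $\rhot'(\varphi)=\beta$ and $\rhot''(\varphi)=0$ in the limit; consequently the artificial term $\tfrac{1}{2}\rhot''(\varphi_\e)m(\varphi_\e)(\nabla\varphi_\e\cdot\nabla\mu_\e)\uvec_\e$ disappears, $\Jvect_\e=-\rhot'(\varphi_\e)m(\varphi_\e)\nabla\mu_\e$ converges to $-\beta m(\varphi)\nabla\mu$, and $\rhot(\varphi_\e)\uvec_\e\to\rho(\varphi)\uvec$ in the relevant topologies, so that the weak formulation \eqref{wf1step1}--\eqref{wf2step1} passes to the system \eqref{Pb11}--\eqref{Pb15}. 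For the energy estimate I would not pass to the limit directly in \eqref{enin} (the exceptional time set depends on $\e$), but rather in the integrated inequality \eqref{eninomega}, using weak lower semicontinuity of norms, Lemma \ref{simplelem} for the variable viscosity and mobility terms, and Fatou on the $F_\e(\varphi_\e)$ term; Lemma \ref{Abelslemma} then converts the resulting inequality into the pointwise energy inequality \eqref{enin} with $\rho$ in place of $\rhot$, valid for a.a.\ $t\in(0,T)$. The single genuine obstacle is the strict bound $|\varphi|<1$: it is simultaneously what makes $\rhot$ collapse back to $\rho$, what kills the artificial term, and what identifies the weak limit of $F_\e'(\varphi_\e)$ as $F'(\varphi)$; everything else is a careful but essentially routine compactness argument.
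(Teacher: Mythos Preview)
Your overall strategy matches the paper's, and most of the compactness steps are correct. However, there is a genuine gap: your claim that (RP3) holds \emph{uniformly in $\e$} for the approximating potentials is false for the most important examples covered by (A4)--(A8). For the logarithmic potential, $F$ is bounded on $(-1,1)$ while $F'$ blows up at $\pm 1$; since $F_\e=F$ on $(-1+\e,1-\e)$, the ratio $|F_\e'(s)|^r/(|F_\e(s)|+1)$ near $s=1-\e$ behaves like $(\log(1/\e))^r$, so the constant $\hat{c}_3$ in (RP3) necessarily blows up as $\e\to 0$. Consequently the Step~I route to \eqref{bd9} (bounding $\overline{\mu}_\e$ via $|(F_\e'(\varphi_\e),1)|\leq c\Vert F_\e(\varphi_\e)\Vert_{L^1}+c$) does not give an $\e$-uniform bound on $\mu_\e$ in $L^2(0,T;V)$. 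Your argument for $|\varphi|<1$ and for identifying the limit of $F_\e'(\varphi_\e)$ then collapses, since both rest on the uniform $L^2(Q)$ bound on $\mu_\e$ through the comparison $F_\e'(\varphi_\e)=\mu_\e-a\varphi_\e+J\ast\varphi_\e+\delta\Delta\varphi_\e$.

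The paper closes this gap by a Kenmochi-type argument: one tests the Cahn--Hilliard equation by $\mathcal{N}_{\varphi_\e}\big(F_\e'(\varphi_\e)-\overline{F_\e'(\varphi_\e)}\big)$ and uses (A7) to obtain $\Vert F_\e'(\varphi_\e)-\overline{F_\e'(\varphi_\e)}\Vert_{L^2(0,T;H)}\leq C$ uniformly in $\e$; then, exploiting the hypothesis $|\overline{\varphi}_0|<1$ (which your argument never invokes---a clear symptom of the gap) together with the monotonicity structure (A5), (A6), (A8), one deduces $\Vert F_\e'(\varphi_\e)\Vert_{L^2(0,T;L^1(\Omega))}\leq C(\overline{\varphi}_0)$ and hence the uniform bound on $\overline{\mu}_\e$. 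Once this is in place, the rest of your outline (strict bound $|\varphi|<1$, vanishing of the artificial term since $\rhot''=0$ on $[-1,1]$, passage to the limit in the weak formulation and in \eqref{eninomega}) essentially agrees with the paper. A minor additional point: in \eqref{eninomega} the weight $\omega'$ changes sign, so a bare application of Fatou to $\int_\Omega F_\e(\varphi_\e)$ is not enough; the paper handles this by splitting over $\{\omega'\gtrless 0\}$ and using the convexity of $G_\e(s):=F_\e(s)+\tfrac{a_\infty}{2}s^2$ together with the already established $L^2$ bound on $F_\e'(\varphi_\e)$.
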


\begin{proof}

We consider Problem P$_{\epsilon,\delta}$ consisting in \eqref{Pb01}--\eqref{Pb07} where the regular potential $F$
is taken equal to $F_\epsilon$ given by (see \cite{FG2} and \cite{FGR})
$$\Fe=\Fie+F_{2\e},$$
where $\Fie$ and $F_{2\e}$ are defined by
\begin{equation}
\Fie^{(p)}(s)=\left\{\begin{array}{lll}
F_1^{(p)}(1-\e),\qquad s\geq 1-\e\\
F_1^{(p)}(s),\qquad|s|\leq 1-\e\\
F_1^{(p)}(-1+\e),\qquad s\leq -1+\e,
\end{array}\right.
\label{approxpot1}
\end{equation}
\begin{align}
& F_{2\e}^{''}(s)=\left\{\begin{array}{lll}
F_2^{''}(1-\e),\qquad s\geq 1-\e\\
F_2^{''}(s),\qquad|s|\leq 1-\e\\
F_2^{''}(-1+\e),\qquad s\leq -1+\e,
\end{array}\right.
 \label{approxpot2}
\end{align}
and $\Fie(0)=F_1(0)$,
$\Fie'(0)=F_1'(0)$,$\dots$, $\Fie^{(p-1)}(0)=F_1^{(p-1)}(0)$,
and $F_{2\e}(0)=F_2(0)$, $F_{2\e}'(0)=F_2'(0)$.
Recalling \cite[Lemma1, Lemma 2]{FG2} and \cite[Proof of Theorem 2]{FGR}, there exist
two constants $C_p$ and $D_p$, depending on $p$ but independent of $\e$, and there exists $\e_0>0$
such that
\begin{align}
&\Fe(s)\geq C_p|s|^p-D_p,\qquad\forall s\in\mathbb{R},\quad\forall\e\in(0,\e_0],\label{auxlem1}
\end{align}
and
\begin{align}
&\Fe''(s)+a(x)\geq c_0,\qquad\forall s\in\mathbb{R},\quad\mbox{a.e. }x\in\Omega,\quad\forall\e\in(0,\e_0].\label{auxlem2}
\end{align}
Now, thanks to Lemma \ref{existence1} we know that Problem P$_{\e,\delta}$ admits a weak solution $[\uvec_\e,\varphi_\e]$
with the regularity properties \eqref{regp1Pb0}--\eqref{regp3Pb0} satisfying the energy inequality \eqref{enin}
\begin{align}
&\int_\Omega\frac{1}{2}\rhot(\varphi_\e)\uvec_\e^2+E_\e(\varphi_\e)+\frac{\delta}{2}\Vert\nabla\varphi_\e\Vert^2+
2\int_0^t\Vert\sqrt{\nu(\varphi_\e)}D\uvec_\e\Vert^2 d\tau+\delta\int_0^t\Vert A^{3/2}\uvec_\e\Vert^2 d\tau\nonumber\\
&+\int_0^t\big\Vert\sqrt{m(\varphi_\e)}\nabla\mu_\e\big\Vert^2 d\tau
\leq\int_\Omega\frac{1}{2}\rhot(\varphi_0)\uvec_0^2+E_\e(\varphi_0)+\frac{\delta}{2}\Vert\nabla\varphi_0\Vert^2
+\int_0^t\langle \hvec,\uvec_\e\rangle_{V_{div}} d\tau,
\label{eninepsilon}
\end{align}
for almost all $t\in(0,T)$, where
\begin{align*}
&E_\e(\varphi):=\frac{1}{2}\Vert\sqrt{a}\varphi\Vert^2-\frac{1}{2}(\varphi,J\ast\varphi)+\int_\Omega F_\e(\varphi)\nonumber\\
&=\frac{1}{4}\int_\Omega\int_\Omega J(x-y)\big(\varphi(x)-\varphi(y)\big)^2 dxdy+\int_\Omega F_\e(\varphi).
\end{align*}
Notice that in the weak formulation of \eqref{Pb01} we don't have additional auxiliary terms, like the last
two terms on the left hand side of \eqref{FGapp1}.

Since $\delta>0$ is fixed, starting from \eqref{eninepsilon}, splitting the last term on the right hand side
by taking (A9) into account, and arguing as in the Faedo-Galerkin scheme of the proof of Lemma \ref{existence1}
by employing assumptions
(A1)--(A3), \eqref{bdrho1},
 \eqref{auxlem1}, \eqref{auxlem2} and (A6) which implies that there exists $\e_0>0$ such that
\begin{align}
&F_{1\e}(s)\leq F_1(s),\qquad\forall s\in (-1,1),\quad\forall\e\in(0,\e_0],\label{boundF}
\end{align}
we still deduce estimates \eqref{bd1}--\eqref{bd5} and
\eqref{bd7}, \eqref{bd8}, \eqref{bd11} for $\uvec_\e$, $\varphi_\e$, and $\mu_\e$, namely
\begin{align}
&\Vert\uvec_\e\Vert_{L^\infty(0,T;G_{div})\cap L^2(0,T;V_{div})}\leq C,\label{bd1eps}\\
\delta^{1/2}&\Vert\uvec_\e\Vert_{L^2(0,T;D(A^{3/2}))}\leq C,\label{bd2eps}\\
&\Vert\varphi_\e\Vert_{L^\infty(0,T;L^p(\Omega))\cap L^2(0,T;V) }\leq C,\label{bd3eps}\\
\delta^{1/2}&\Vert\varphi_\e\Vert_{L^\infty(0,T;V)\cap L^2(0,T;H^2(\Omega))}\leq C,\label{bd4eps}\\
&\Vert\nabla\mu_\e\Vert_{L^2(0,T;H)}\leq C ,\label{bd5eps}\\
&\Vert\varphi_\e'\Vert_{L^2(0,T;V')}\leq C,\label{bd9eps}
\end{align}
which are now uniform in $\e$. Here, all constants $C$ have the same kind of dependencies
on the data $\uvec_0$, $\varphi_0$, $\hvec$ and on the parameters of the problem as in Step I.
Moreover, the estimate for the time derivatives $(\rhot(\varphi_\e)\uvec_\e\big)_t$
which corresponds to \eqref{bd10}
and obtained by comparison in the weak formulation of \eqref{Pb01} (cf. \eqref{rhoutd1}--\eqref{rhoutd2})
 now becomes
\begin{align}
&\big\Vert\big(\rhot(\varphi_\e)\uvec_\e\big)_t\big\Vert_{L^{30/29}(0,T;D(A^{3/2})')}\leq C_\delta.\label{bd8eps}
\end{align}
We also need an estimate for the sequence of $\mu_\e$ and in particular we need to control the sequence of averages
$\overline{\mu}_\e$. To this aim we notice that equation \eqref{Pb03} can be written in the form
\begin{align}
&\varphi_\e'+\uvec_\e\cdot\nabla\varphi_\e=-\mathcal{B}_{\varphi_\e}\mu_\e.\label{Pb03eps}
\end{align}
Test \eqref{Pb03eps} by $\mathcal{N}_{\varphi_\e}\big(\Fe'(\varphi_\e)-\overline{\Fe'(\varphi_\e)}\big)$. On account of
\eqref{mathN1} and \eqref{mathN2} we obtain
\begin{align}
&\big\langle\Fe'(\phie)-\overline{\Fe'(\phie)},\mathcal{N}_{\phie}\phie'\big\rangle_V
+\big\langle\Fe'(\phie)-\overline{\Fe'(\phie)},\mathcal{N}_{\phie}(\ue\cdot\nabla\phie)\big\rangle_V\nonumber\\
&=-\big\langle\Fe'(\phie)-\overline{\Fe'(\phie)},\mue\big\rangle_V,
\label{Pe1tested}
\end{align}
and since the elements in all dualities of this identity belong to $H$ (the fact that
$\Fe'(\phie)\in H$ follows from comparison in the expression
for $\mue=a\phie-J\ast\phie+\Fe'(\phie)-\delta\Delta\phie$), then
in \eqref{Pe1tested} we can replace all dualities in $V$
with scalar products in $H$.
Now we have
\begin{align}
&\big(\Fe'(\phie)-\overline{\Fe'(\phie)},\mue\big)=
\big(\Fe'(\phie)-\overline{\Fe'(\phie)},a\phie-J\ast\phie+\Fe'(\phie)-\overline{\Fe'(\phie)}-\delta\Delta\phie\big)
\nonumber\\
&\geq\frac{1}{2}\Vert\Fe'(\phie)-\overline{\Fe'(\phie)}\Vert^2-\frac{1}{2}\Vert a\phie-J\ast\phie\Vert^2
+\delta\int_\Omega\Fe''(\phie)|\nabla\phie|^2 \nonumber\\
&\geq\frac{1}{2}\Vert\Fe'(\phie)-\overline{\Fe'(\phie)}\Vert^2-C_J\Vert\phie\Vert^2
+c_0 \delta\Vert\nabla\phie\Vert^2-a_\infty\delta\Vert\nabla\phie\Vert^2.
\label{FeFe'}
\end{align}
Therefore, by combining
\eqref{Pe1tested} with \eqref{FeFe'} we deduce
\begin{align}
&\Vert\Fe'(\phie)-\overline{\Fe'(\phie)}\Vert\leq c(\Vert\mathcal{N}_{\phie}\phie'\Vert
+\Vert\mathcal{N}_{\phie}(\ue\cdot\nabla\phie)\Vert+\sqrt{\delta}\Vert\phie\Vert_V)\nonumber\\
&\leq c(\Vert\phie'\Vert_{V'}+\Vert\ue\cdot\nabla\phie\Vert_{V'}+\sqrt{\delta}\Vert\phie\Vert_V),
\label{FeFe'2}
\end{align}
where \eqref{mathN} has been taken into account as well.
Notice also that we have
\begin{align*}
&|(\ue\cdot\nabla\phie,\psi)|=|(\ue\phie,\nabla\psi)|\leq c\Vert\nabla\uvec_\e\Vert\Vert\phie\Vert_{L^p(\Omega)}\Vert\psi\Vert_V,
\end{align*}
for all $\psi\in V$, which yields, on account of \eqref{bd1eps} and \eqref{bd3eps}
\begin{align}
&\Vert\ue\cdot\nabla\phie\Vert_{L^2(0,T;V')}\leq C.\label{bd10eps}
\end{align}
Hence, due to \eqref{bd9eps}, \eqref{bd10eps} and \eqref{bd3eps} from \eqref{FeFe'2} we obtain the uniform in $\delta$ and $\e$ bound
\begin{align}
&\Vert\Fe'(\phie)-\overline{\Fe'(\phie)}\Vert_{L^2(0,T;H)}\leq C.\label{bd11eps}
\end{align}
With \eqref{bd11eps} available, by employing the condition $|\overline{\varphi}_0|<1$
and the bound
\begin{align}
&|F_{1\e}'(s)|\leq |F_1'(s)|,\qquad\forall s\in(-1,1),\quad\forall \e\in(0,\e_0],\label{boundF'}
\end{align}
for some $\e_0>0$, which is ensured by (A5), (A6) and (A8) (see \cite[Proof of Theorem 1]{FG2} for details),
we can now apply an argument devised by Kenmochi et al. \cite{KNP} (see also \cite{CGGS} and \cite[Proof of Theorem 1]{FG2})
and deduce the following control
\begin{align}
&\Vert\Fe'(\phie)\Vert_{L^2(0,T;L^1(\Omega))}\leq C(\overline{\varphi}_0).\label{bd12eps}
\end{align}
Since
\begin{align*}
&\int_\Omega\mu_\e=\int_\Omega(a\phie-J\ast\phie+F_\e'(\phie)-\delta\Delta\phie)=\int_\Omega F_\e'(\phie),
\end{align*}
then we have $\Vert\overline{\mu}_\e\Vert_{L^2(0,T)}\leq C$ and therefore, by Poincar\'{e}-Wirtinger
inequality we get
\begin{align}
\Vert\mu_\e\Vert_{L^2(0,T;V)}\leq C.\label{bd13eps}
\end{align}
Like in the Faedo-Galerkin scheme ($\delta>0$ here is still fixed),
from estimates \eqref{bd1eps}--\eqref{bd5eps} and \eqref{bd13eps}
we deduce that there exist $\uvec$, $\varphi$ and $\mu$
such that, up to a subsequence, we have
\begin{align}
&\uvec_\e\rightharpoonup \uvec,\qquad\mbox{weakly$^\ast$ in } L^\infty(0,T;G_{div}),\label{weak1eps}\\
&\uvec_\e\rightharpoonup \uvec,\qquad\mbox{weakly in } L^2(0,T;D(A^{3/2})),\label{weak2eps}\\
&\varphi_\e\rightharpoonup \varphi,\qquad\mbox{weakly$^\ast$ in } L^\infty(0,T;V\cap L^p(\Omega)),\label{weak3eps}\\
&\varphi_\e\rightharpoonup \varphi,\qquad\mbox{weakly in } L^2(0,T;H^2(\Omega)),\label{weak4eps}\\
&\mu_\e\rightharpoonup \mu,\qquad\mbox{weakly in } L^2(0,T;V).\label{weak5eps}
\end{align}
Furthermore, by using the bound \eqref{bd10} and the bound (cf. \eqref{bd12})
\begin{align}
&\Vert\rhot(\phie)\uvec_\e\Vert_{L^{15/7}(0,T;W^{1,15/7}(\Omega)^3)}\leq C_\delta,\label{bd14eps}
\end{align}
as well as the bound \eqref{bd9eps}
and arguing as in the Faedo-Galerkin scheme of Step I we can again deduce the strong convergences
\begin{align}
&\phie\to\varphi,\qquad\mbox{strongly in }L^2(0,T;H^{2-\gamma}(\Omega)),\qquad\gamma>0\label{strong6eps},\\
&\uvec_\e\to\uvec,\qquad
\mbox{strongly in } L^{6-\gamma}(Q)^3,\label{strong3eps}\\
&\nabla\varphi_\e\to\nabla\varphi,\qquad\mbox{strongly in } L^{10/3-\gamma}(Q)^3,\label{strong4eps}
\end{align}
as well as
\begin{align}
&\rhot''(\phie)\to\rhot''(\varphi),\qquad m(\phie)\to m(\varphi),
\qquad\mbox{strongly in }L^q(Q),\quad\forall q\in[2,\infty).\label{strong5eps}
\end{align}
Notice that now, in order to deduce strong convergence for $\uvec_\e$ (and therefore \eqref{strong3eps} and
\eqref{strong4eps}) we do not need to
employ the trick used in Step I (see \eqref{trick}). In particular, \eqref{bd8eps}, \eqref{bd14eps}
and Aubin-Lions lemma
 entail strong convergence for $\rhot(\phie)\uvec_\e$ in $L^{15/7}(Q)^3$
 (notice that $W^{1,15/7}(\Omega)^3\hookrightarrow\hookrightarrow L^{15/7}(\Omega)^3$ and that
$L^{15/7}(\Omega)^3\hookrightarrow D(A^{3/2})'$, since $D(A^{3/2})\hookrightarrow L^{15/8}(\Omega)^3$).

In order to pass to the limit in the variational formulation of Problem P$_{\e,\delta}$
and hence to prove that $[\uvec,\varphi]$ is a weak solution to Problem \eqref{Pb11}--\eqref{Pb17}
we need to show that the limit $\varphi$ satisfies the condition $|\varphi|<1$ a.e. in $Q=\Omega\times(0,T)$.
This can be done exactly as in \cite[Proof of Theorem 1]{FG2} by adapting an argument devised in \cite{DD}
(see also \cite{EL}). We recall that this argument is based only on the use of: (i) estimate
 $\Vert \Fe'(\phie)\Vert_{L^1(Q)}\leq C(\overline{\varphi}_0)$ (cf. \eqref{bd12eps}), (ii)
 the pointwise convergence $\phie\to\varphi$ a.e. in $Q$ (cf. \eqref{strong6eps}) and
 (iii) the fact that $F'(s)\to\pm\infty$, as $s\to \pm 1$ (cf. (A8)).
Hence \eqref{pointbdphi} follows.

From this bound, from the pointwise convergence of $\phie$ to $\varphi$ in $Q$ and from the fact that $\Fe'\to F'$
uniformly on every compact interval included in $(-1,1)$ we infer that
\begin{align}
&\Fe'(\phie)\to F'(\varphi)\qquad\mbox{a.e. in }Q.\label{pointwF'}
\end{align}



We are now ready to pass to the limit in the variational formulation \eqref{wf1step1}, \eqref{wf2step1}
of Problem P$_{\e,\delta}$ (with $\delta>0$
fixed) as $\e\to0$, in order to recover the variational formulation of Problem \eqref{Pb11}--\eqref{Pb17}
(still with test functions $\wvec\in D(A^{3/2})$ and $\psi\in V$ for \eqref{Pb11} and \eqref{Pb13}, respectively).
This passage to the limit can be carried out
in the same fashion as done in the proof of Lemma \ref{existence1} (except for the last two terms on the left hand side
of \eqref{FGapp1} which are not present in the weak formulation of Problem P$_{\e,\delta}$),
by employing convergences \eqref{weak1eps}--\eqref{weak5eps}, \eqref{strong6eps}--\eqref{strong5eps}, \eqref{pointwF'}.

Moreover, in this case, as far as the artificial term in the momentum balance equation
is concerned, we observe that its contribution vanishes in the limit.
Indeed,
\eqref{strong3eps}, \eqref{strong4eps}, \eqref{strong5eps}, and \eqref{weak5eps} entail
\begin{align}
&\int_0^T\Big(\frac{1}{2}\rhot''(\phie)m(\phie)(\nabla\phie\cdot\nabla\mue)\uvec_\e,\wvec\Big)\chi(t)dt
\to\int_0^T\Big(\frac{1}{2}\rhot''(\varphi)m(\varphi)(\nabla\varphi\cdot\nabla\mu)\uvec,\wvec\Big)\chi(t)dt=0,\nonumber
\end{align}
as $\e\to 0$, for all test functions $\wvec\in D(A^{3/2})$ and $\chi\in C^\infty_0(0,T)$, where the last identity is due to
\eqref{pointbdphi} and to the fact that $\rhot''(s)=0$ for all $s\in[-1,1]$.
Hence, the weak formulation of \eqref{Pb11}--\eqref{Pb17} is obtained in the limit
with the function $\rho$ in place of $\rhot$, since we have proven that $|\varphi|<1$
and on account of $\rhot(s)=\rho(s)$ for all $s\in(-1,1)$.

The arguments to prove that $\uvec\in C_w([0,T];G_{div})$, that
the limit $[\uvec,\varphi]$ attains the initial value $[\uvec_0,\varphi_0]$
and that the energy inequality \eqref{enin} is satisfied for almost all $t\in(0,T)$
are the same as in Step I and we omit the details.

Finally, we can derive the auxiliary energy inequality \eqref{eninomega} satisfied by the limit
$[\uvec,\varphi]$ (with $\rho$ in place of $\rhot$) for all $\omega\in W^{1,1}(0,T)$ with $\omega(T)=0$ and $\omega\geq 0$.
Indeed, from Step I we can write it first for every $\e-$approximate solution $[\uvec_\e,\varphi_\e]$
\begin{align}
&\Big(\int_\Omega\frac{1}{2}\rhot(\varphi_{0})\uvec_{0}^2+E(\varphi_{0})+\frac{\delta}{2}\Vert\nabla\varphi_{0}\Vert^2\Big)\omega(0)+
\int_0^T\Big(\int_\Omega\frac{1}{2}\rhot(\phie)\uvec_\e^2+E_\e(\varphi_\e)+\frac{\delta}{2}\Vert\nabla\phie\Vert^2\Big)\omega'(\tau)d\tau\nonumber\\
&\geq\int_0^T\Big(2\Vert\sqrt{\nu(\phie)}D\uvec_\e\Vert^2+\delta\Vert A^{3/2}\uvec_\e\Vert^2
+\Vert\sqrt{m(\phie)}\nabla\mue\Vert^2\Big)\omega(\tau)d\tau-\int_0^T\langle\hvec,\uvec_\e\rangle_{V_{div}}
\omega(\tau)d\tau.
\label{eninomegaeps}
\end{align}
As is Step I, in order to pass to the limit in the term containing the functional $E_\e(\phie)$ we need
to show that
\begin{align}
&\int_0^T\omega'(\tau)d\tau\int_\Omega F_\e(\phie)\to\int_0^T\omega'(\tau)d\tau\int_\Omega F(\varphi).\label{claim2}
\end{align}
This convergence can now be established in the following way. First, introduce the function $G_\e$ defined by
\begin{align}
&G_\e(s)=F_\e(s)+\frac{a_\infty}{2}s^2,\label{Gdef}
\end{align}
with $a_\infty=\Vert a\Vert_{L^\infty(\Omega)}$ and, observe that owing to \eqref{auxlem2}, $G_\e$ is convex on $\mathbb{R}$.
Hence, we have
\begin{align}
&\int_\Omega G_\e(\phie)\leq\int_\Omega G_\e(\varphi)+\int_\Omega G_\e'(\phie)(\phie-\varphi),\qquad
 \int_\Omega G_\e(\varphi)\leq \int_\Omega G_\e(\phie)+\int_\Omega G_\e'(\varphi)(\varphi-\phie)\label{convexity}
\end{align}
Introduce the sets $I^\omega_+:=\{t\in(0,T):\omega'(t)\geq 0\}$ and $I^\omega_-:=\{t\in(0,T):\omega'(t)< 0\}$. Then,
multiply \eqref{convexity}$_1$ by $\omega'\chi_{I^\omega_+}$ and \eqref{convexity}$_2$ by $\omega'\chi_{I^\omega_-}$,
where $\chi_{I^\omega_{\pm}}$ are the characteristic functions of the sets $I^\omega_{\pm}$. Integrating in time between 0 and $T$ and
summing the resulting
inequalities we obtain
\begin{align}
&\int_0^T\omega'(\tau)d\tau\int_\Omega G_\e(\phie)\leq\int_0^T\omega'(\tau)d\tau\int_\Omega G_\e(\varphi)
+\int_{I^\omega_+}\omega'(\tau)d\tau\int_\Omega G_\e'(\phie)(\phie-\varphi)\nonumber\\
&+\int_{I^\omega_-}\omega'(\tau)d\tau\int_\Omega G_\e'(\varphi)(\phie-\varphi)\leq
\int_0^T\omega'(\tau)d\tau\int_\Omega G_\e(\varphi)\nonumber\\
&+C_\omega\big(\Vert G_\e'(\phie)\Vert_{L^2(0,T;H)}+\Vert G'(\varphi)\Vert_{L^2(0,T;H)}\big)
\Vert\phie-\varphi\Vert_{L^2(0,T;H)},\label{intermineq}
\end{align}
where, in the last inequality we have exploited the bound \eqref{boundF'}, which in particular
implies that $|G_\e'(s)|\leq |G'(s)|$ for all $s\in(-1,1)$ and for every $\e\in(0,\e_0]$.
Therefore, on account of the estimate $\Vert F_\e'(\phie)\Vert_{L^2(0,T;H)}\leq C$,
which follows from \eqref{bd11eps} and from the bound $\Vert\overline{\mu}_\e\Vert_{L^2(0,T)}\leq C$,
and which implies that $\Vert G_\e'(\phie)\Vert_{L^2(0,T;H)}\leq C$ and thanks to the strong
convergence $\phie\to\varphi$ in $L^2(Q)$, we have that the second term on the right hand side of
\eqref{intermineq} converges to zero as $\e\to0$ and therefore we deduce
\begin{align}
&\limsup_{\e\to 0}\int_0^T\omega'(\tau)d\tau\int_\Omega G_\e(\phie)\leq\lim_{\e\to 0}\int_0^T\omega'(\tau)d\tau\int_\Omega G_\e(\varphi)
=\int_0^T\omega'(\tau)d\tau\int_\Omega G(\varphi),\label{limsup}
\end{align}
where the last equality follows from Lebesgue's theorem (use \eqref{boundF}
and the fact that $|\varphi|<1$ a.e. in $Q$ and that $F_\e(s)\to F(s)$ pointwise for all $s\in(-1,1)$).
On the other hand, thanks to Fatou's lemma and to the pointwise convergence
$F_\e(\phie)\to F(\varphi)$, a.e. in $Q$ we also have
\begin{align}
&\int_0^T\omega'(\tau)d\tau\int_\Omega G(\varphi)\leq\liminf_{\e\to 0}\int_0^T\omega'(\tau)d\tau\int_\Omega G_\e(\phie).
\label{liminf}
\end{align}
From \eqref{limsup} and \eqref{liminf}, on account of the definition \eqref{Gdef} of $G_\e$,
 we get \eqref{claim2}.

We can now pass to the limit in \eqref{eninomegaeps} in the same way as done in Step I, taking
into account the strong/weak convergences for $\uvec_\e$, $\phie$ and $\mu_\e$ . In particular, the strong convergence
$\rhot(\phie)\uvec_\e^2\to\rho(\varphi)\uvec^2$ in $L^{3-\gamma}(Q)$ still holds,
and we take into account \eqref{strong4eps} as well.
Therefore, we have proven that \eqref{eninomega} (with $\rho$ in place of $\rhot$) is satisfied 
and the proof of Lemma \ref{existence2} is concluded.

\end{proof}

\subsection{Step III. Limit as $\delta\to 0$ and end of the proof.}\label{lim-delta}

We now want to pass to the limit in problem \eqref{Pb11}--\eqref{Pb17}
in order to prove that the original problem, i.e.
\begin{align}
&(\rho\uvec)_t+\mbox{div}(\rho\uvec\otimes\uvec)-2\mbox{div}\big(\nu(\varphi)D\uvec\big)+\nabla\pi+\mbox{div}(\uvec\otimes\Jvect)
=\mu\nabla\varphi+\hvec,\label{Pb21}\\
&\mbox{div}(\uvec)=0,\label{P22}\\
&\varphi_t+\uvec\cdot\nabla\varphi=\mbox{div}(m(\varphi)\nabla\mu),\label{Pb23}\\
&\mu=a\varphi-J\ast\varphi+F'(\varphi),\label{Pb24}\\
&\Jvect:=-\beta m(\varphi)\nabla\mu,\label{Pb25}\\
&\uvec=0,\qquad\frac{\partial\mu}{\partial\nvec}=0,\qquad\mbox{on }\partial\Omega,\label{Pb26}\\
&\uvec(0)=\uvec_0,\qquad\varphi(0)=\varphi_0,\label{Pb27}
\end{align}
admits a weak solution. To this aim,
we approximate problem \eqref{Pb21}--\eqref{Pb27}
by a sequence of problems P$_\delta$ given by \eqref{Pb11}--\eqref{Pb17} with initial data $\uvec_0$ and
$\varphi_{0\delta}$, where $\varphi_{0\delta}$ are chosen according with the following
\begin{lem}
\label{lemaux}
Given $\varphi_0\in L^\infty(\Omega)$ with $F(\varphi_0)\in L^1(\Omega)$ and $|\overline{\varphi}_0|<1$,
there exists a sequence $\{\varphi_{0\delta}\}\subset D(B)$
with $F(\varphi_{0\delta})\in L^1(\Omega)$ and $|\overline{\varphi}_{0\delta}|<1$ such that
\begin{align}
&\delta\Vert\nabla\varphi_{0\delta}\Vert^2\to 0\qquad\mbox{and}\qquad\varphi_{0\delta}\to\varphi_0\quad\mbox{in }H,
\qquad\mbox{as }\delta\to 0.\label{auxlem}
\end{align}
\end{lem}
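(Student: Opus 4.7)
The plan is to regularize $\varphi_0$ via the Neumann heat semigroup. Let $\{S(t)\}_{t\geq 0}$ denote the $C_0$-semigroup on $H$ generated by the Laplacian with homogeneous Neumann boundary conditions. Four well-known facts will be used: for every $t>0$, $S(t)$ maps $H$ into $D(B)$ (analytic smoothing); $S$ preserves integrals, $\overline{S(t)v}=\overline{v}$ (since constants lie in $\ker\Delta_N$); $S$ is an $L^\infty$-contraction; and $S(t)$ is represented by a strictly positive symmetric kernel $p_t(x,y)$ with $\int_\Omega p_t(x,y)\,dy=1$. I would choose $\lambda(\delta)$ with $\lambda(\delta)\to 0$ and $\delta/\lambda(\delta)\to 0$ as $\delta\to 0$ --- for definiteness $\lambda(\delta):=\sqrt{\delta}$ --- and set
\begin{equation*}
\varphi_{0\delta}\;:=\;S(\lambda(\delta))\,\varphi_0.
\end{equation*}

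The easy properties then follow at once. Smoothing gives $\varphi_{0\delta}\in D(B)$; mean-preservation gives $|\overline{\varphi}_{0\delta}|=|\overline{\varphi}_0|<1$; strong continuity of $S$ at $0$ gives $\varphi_{0\delta}\to\varphi_0$ in $H$; and the standard analytic-semigroup bound $\|\nabla S(t)v\|\leq Ct^{-1/2}\|v\|$ yields
\begin{equation*}
\delta\|\nabla\varphi_{0\delta}\|^2\;\leq\;\frac{C\delta}{\lambda(\delta)}\,\|\varphi_0\|^2\;\longrightarrow\;0.
\end{equation*}
Moreover, $L^\infty$-contractivity together with $\|\varphi_0\|_\infty\leq 1$ (forced by $F(\varphi_0)\in L^1$, since $F$ takes the value $+\infty$ outside $[-1,1]$) gives $\|\varphi_{0\delta}\|_\infty\leq 1$, and strict positivity of $p_{\lambda(\delta)}$ combined with $\varphi_0\not\equiv\pm 1$ (guaranteed by $|\overline{\varphi}_0|<1$) upgrades this to the pointwise bound $|\varphi_{0\delta}(x)|<1$.

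The main obstacle is showing that $F(\varphi_{0\delta})\in L^1(\Omega)$. My approach is to lift the integrability of $F(\varphi_0)$ through the convolution by means of a Jensen-type estimate. Introduce
\begin{equation*}
G(s)\;:=\;F(s)+\frac{a_\infty}{2}\,s^2,\qquad a_\infty\;:=\;\|a\|_{L^\infty(\Omega)},
\end{equation*}
which, by hypothesis (A7), satisfies $G''(s)\geq c_0$ on $(-1,1)$ and is therefore convex there; extending $G$ by $+\infty$ outside $[-1,1]$ makes it a proper lower-semicontinuous convex function on $\mathbb{R}$, with $G(\varphi_0)\in L^1(\Omega)$. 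Jensen's inequality applied to the probability measure $p_{\lambda(\delta)}(x,\cdot)\,dy$ gives
\begin{equation*}
G(\varphi_{0\delta}(x))\;\leq\;\int_\Omega p_{\lambda(\delta)}(x,y)\,G(\varphi_0(y))\,dy,
\end{equation*}
and integration in $x$, using $\int_\Omega p_{\lambda(\delta)}(x,y)\,dx=1$ (which follows from symmetry plus integral-preservation), yields $\int_\Omega G(\varphi_{0\delta})\leq\int_\Omega G(\varphi_0)<\infty$, whence $F(\varphi_{0\delta})\in L^1(\Omega)$. This Jensen step is the pivotal one: it is what singles out the heat-semigroup regularization over alternatives such as eigenfunction projection, which neither preserves the constraint $|\varphi|\leq 1$ nor keeps $F\circ\varphi$ integrable.
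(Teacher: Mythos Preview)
Your proof is correct. The paper takes a closely related but distinct route, using the resolvent $\varphi_{0\delta}:=(I+\sqrt{\delta}\,B)^{-1}\varphi_0$ rather than the heat semigroup $S(\sqrt{\delta})\varphi_0$. Both regularizations share the decisive structural features---they map $H$ into $D(B)$, control the mean, are order-preserving $L^\infty$-contractions, and yield the same key inequality $\int_\Omega G(\varphi_{0\delta})\leq\int_\Omega G(\varphi_0)$ for the convex function $G(s)=F(s)+\tfrac{a_\infty}{2}s^2$---and both rely on the scaling $\lambda(\delta)=\sqrt{\delta}$ to get $\delta\Vert\nabla\varphi_{0\delta}\Vert^2\to 0$. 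The difference lies in how the convexity bound is established: the paper tests the resolvent identity $\varphi_{0\delta}-\varphi_0=-\sqrt{\delta}\,B\varphi_{0\delta}$ against $G'(\varphi_{0\delta})$ and integrates by parts (using $G''\geq 0$ and $sG'(s)\geq 0$), whereas you obtain it by Jensen's inequality applied to the positive Markov kernel $p_t(x,\cdot)$. Your kernel argument makes the strict pointwise bound $|\varphi_{0\delta}|<1$ immediate and transparent; the paper's variational argument is more operator-theoretic and does not rely on a kernel representation. In substance the two proofs are interchangeable, and the inequality $\int_\Omega G(\varphi_{0\delta})\leq\int_\Omega G(\varphi_0)$ you obtain is exactly what the paper later reuses when passing to the limit in the energy inequality.
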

\begin{proof}
Take $\varphi_{0\delta}\in D(B)$ given by
\begin{align*}
&\varphi_{0\delta}:=(I+\sqrt{\delta} B)^{-1}\,\varphi_0.
\end{align*}
Then we have $\varphi_{0\delta}+\sqrt{\delta}B\varphi_{0\delta}=\varphi_0$ and so
$\overline{\varphi}_{0\delta}+\sqrt{\delta}\overline{\varphi}_{0\delta}=\overline{\varphi}_{0}$, whence
\begin{align}
&|\overline{\varphi}_{0\delta}|=\frac{|\overline{\varphi}_{0}|}{1+\sqrt{\delta}}<|\overline{\varphi}_{0}|<1.
\label{avcont}
\end{align}
Moreover, by introducing the convex function $G$ defined as in \eqref{Gdef}
\begin{align*}
&G(s)=F(s)+\frac{a_\infty}{2}s^2,
\end{align*}
and by multiplying the relation
$\varphi_{0\delta}-\varphi_0=-\sqrt{\delta}B\varphi_{0\delta}$ by $G'(\varphi_{0\delta})$ in $L^2(\Omega)$
we obtain
\begin{align*}
&\int_\Omega G'(\varphi_{0\delta})(\varphi_{0\delta}-\varphi_0)=-\sqrt{\delta}\int_{\Omega} G''(\varphi_{0\delta})|\nabla\varphi_{0\delta}|^2
-\sqrt{\delta}\int_\Omega G'(\varphi_{0\delta})\varphi_{0\delta}\leq 0,
\end{align*}
owing to the fact that $G'$ is monotone nondecreasing and that it is not restrictive to assume that $F'(0)=0$ (and hence $G'(0)=0$).
Therefore, the convexity of $G$ yields
\begin{align}
&\int_\Omega G(\varphi_{0\delta})\leq\int_\Omega G(\varphi_{0})+\int_\Omega G'(\varphi_{0\delta})(\varphi_{0\delta}-\varphi_0)
\leq \int_\Omega G(\varphi_{0}),\label{convexG}
\end{align}
and, since $F(\varphi_0)\in L^1(\Omega)$, from this last inequality we deduce that $F(\varphi_{0\delta})\in L^1(\Omega)$
and hence that $\varphi_{0\delta}\in L^\infty(\Omega)$ with $|\varphi_{0\delta}|\leq 1$ a.e. in $\Omega$.
In order to deduce the first of $\eqref{auxlem}$, notice that we have
\begin{align*}
&\Vert\varphi_{0\delta}\Vert^2+\sqrt{\delta}\Vert\varphi_{0\delta}\Vert_V^2=(\varphi_0,\varphi_{0\delta})
\leq\frac{1}{2}\Vert\varphi_0\Vert^2+\frac{1}{2}\Vert\varphi_{0\delta}\Vert^2,
\end{align*}
whence
\begin{align*}
&\delta\Vert\nabla\varphi_{0\delta}\Vert^2\leq\frac{\sqrt{\delta}}{2}\Vert\varphi_{0}\Vert^2\to 0,\qquad\mbox{as }\delta\to 0.
\end{align*}
Finally, the convergence in the second of \eqref{auxlem} follows immediately from the theory of maximal monotone operators.
\end{proof}

Now, choosing the initial data $\varphi_{0\delta}$ as given by Lemma \ref{lemaux}, thanks to Lemma \ref{existence2} we know that for every $\delta>0$ Problem P$_\delta$ admits a
weak solution $[\uvec_\delta,\varphi_\delta]$ having the regularity properties \eqref{regp1Pb1}--\eqref{pointbdphi}
and satisfying the energy inequality
\begin{align}
&\int_\Omega\frac{1}{2}\rho(\varphi_\delta)\uvec_\delta^2+E(\varphi_\delta)+\frac{\delta}{2}\Vert\nabla\varphi_\delta\Vert^2+
2\int_0^t\Vert\sqrt{\nu(\varphi_\delta)}D\uvec_\delta\Vert^2 d\tau+\delta\int_0^t\Vert A^{3/2}\uvec_\delta\Vert^2 d\tau\nonumber\\
&+\int_0^t\big\Vert\sqrt{m(\varphi_\delta)}\nabla\mu_\delta\big\Vert^2 d\tau\leq\int_\Omega\frac{1}{2}\rho(\varphi_{0\delta})\uvec_0^2
+E(\varphi_{0\delta})+\frac{\delta}{2}\Vert\nabla\varphi_{0\delta}\Vert^2
+\int_0^t\langle\hvec,\uvec_\delta\rangle_{V_{div}} d\tau,
\label{enindelta}
\end{align}
for almost all $t\in(0,T)$.
Since $\delta$ is no longer fixed here, the uniform with respect to $\delta$ estimates that we shall be able to deduce
for the sequence of $[\uvec_\delta,\varphi_\delta]$
will be weaker than the estimates obtained in Step I and Step II. 
Nevertheless, these estimates will turn out to be enough to pass to the limit in P$_\delta$.
More precisely, noting that the right hand side of \eqref{enindelta} is bounded due to Lemma \ref{lemaux}
(see the first of \eqref{auxlem} and \eqref{convexG}),
the only uniform in $\delta$ estimates that we can write from \eqref{enindelta} are now
\begin{align}
&\Vert\uvec_\delta\Vert_{L^\infty(0,T;G_{div})\cap L^2(0,T;V_{div})}\leq C,\label{bd1delta}\\
&\Vert\varphi_\delta\Vert_{L^\infty(0,T;L^p(\Omega))\cap L^2(0,T;V)}\leq C,\label{bd2delta}\\
&\Vert\nabla\mu_\delta\Vert_{L^2(0,T;H)}\leq C ,\label{bd3delta}\\
&\Vert F(\varphi_\delta)\Vert_{L^\infty(0,T;L^1(\Omega))}\leq C.\label{bd4delta}
\end{align}
Here, we have used (A1)--(A3), the bound $|\varphi_\delta|<1$ which yields
 $\rho(\varphi_\delta)>\widetilde{\rho}_\ast:=\min(\widetilde{\rho}_1,\widetilde{\rho}_2)>0$,
 and the fact that, as a consequence of \eqref{auxlem1} (taking $\e\to 0$) we have
\begin{align*}
&F(s)\geq C_p|s|^p-D_p,\qquad\forall s\in(-1,1),
\end{align*}
with $p\geq 3$ fixed arbitrary.

We now need to control the sequence of averages $\{\overline{\mu}_\delta\}$. To this aim we
first consider equation \eqref{Pb13} written in the form
\begin{align*}
&\varphi_\delta'+\uvec_\delta\cdot\nabla\varphi_\delta=-\mathcal{B}_{\varphi_\delta}\mu_\delta
\end{align*}
and test it by $\mathcal{N}_{\varphi_\delta}\big(F'(\varphi_\delta)-\overline{F'(\varphi_\delta)}\big)$
(recall that $F'(\varphi_\delta)\in L^2(0,T;V)$). Arguing as above we still get
\begin{align*}
&\Vert F'(\phid)-\overline{F'(\phid)}\Vert\leq C(\Vert\mathcal{N}_{\phid}\phid'\Vert
+\Vert\mathcal{N}_{\phid}(\ud\cdot\nabla\phid)\Vert+\sqrt{\delta}\Vert\phid\Vert_V)\nonumber\\
&\leq C(\Vert\phid'\Vert_{V'}+\Vert\ud\cdot\nabla\phid\Vert_{V'}+1),
\end{align*}
where we have used the bound $\delta^{1/2}\Vert\phid\Vert_{L^\infty(0,T;V)}\leq C$, which comes
from the energy inequality \eqref{enindelta}. Moreover, on account of the estimate
\begin{align}
&\Vert\varphi_\delta'\Vert_{L^2(0,T;V')}\leq C,\label{bd8delta}
\end{align}
which can be obtained by arguing exactly as in Step II by comparison in the
weak formulation of \eqref{Pb13},
we are still led to an estimate of the form
\begin{align}
&\big\Vert F'(\varphi_\delta)-\overline{F'(\varphi_\delta)}\big\Vert_{L^2(0,T;H)}\leq C.
\label{Ken1}
\end{align}
With this last estimate available we can now
 apply
the argument devised by Kenmochi et al. \cite{KNP} to deduce a bound of
$F'(\phid)$ in $L^2(0,T;L^1(\Omega))$. Let us recall some details
of this argument. For convenience, here we referee to, e.g., \cite[Proof of Theorem 1]{FG2}.
Introduce first the function
\begin{align*}
&H(s):=F(s)+\frac{a_\infty}{2}(s-s_0)^2,\qquad\forall s\in(-1,1),
\end{align*}
where $s_0\in(-1,1)$ is such that $F'(s_0)=0$ (cf. (A8)). Hence, owing to (A7) $H'$ is monotone
and $H'(s_0)=0$. Then, using the fact that $|\overline{\varphi}_{0\delta}|<1$
and exploiting the argument of Kenmochi et al.,
the following estimate can be established
\begin{align}
&\eta_\delta\Vert H'(\varphi_\delta)\Vert_{L^1(\Omega)}
\leq\int_\Omega(\varphi_\delta-\overline{\varphi}_{0\delta})\big(H'(\varphi_\delta)-\overline{H'(\varphi_\delta)}\big)
+K(\overline{\varphi}_{0\delta}),\label{Ken2}
\end{align}
where
$$\eta_\delta:=\min\{\overline{\varphi}_{0\delta}-m_1,m_2-\overline{\varphi}_{0\delta}\},\qquad
K(\overline{\varphi}_{0\delta})=(\eta_\delta+\kappa_\delta)|\Omega|\big(\max_{[m_1,m_2]}(|F_1'|+|F_2'|)+a_\infty\sigma\big),$$
with $\kappa_\delta:=\max\{\overline{\varphi}_{0\delta}-m_1,m_2-\overline{\varphi}_{0\delta}\}$,
$\sigma:=\max\{s_0-m_1,m_2-s_0\}$, and $m_1,m_2\in (-1,1)$ are two constants that are independent of $\delta$ and fixed
such that $m_1\leq s_0\leq m_2$ and $m_1<\overline{\varphi}_{0\delta}<m_2$ for all $\delta>0$.
Due to \eqref{avcont} we see that $m_1,m_2$ can be fixed in this way. Indeed, it is enough
to fix $-1<m_1<\min\{-|\overline{\varphi}_0|,s_0\}$ and $\max\{|\overline{\varphi}_0|,s_0\}< m_2<1 $.
Moreover we can see also that the constants $\eta_\delta$, $\kappa_\delta$ and hence $K(\overline{\varphi}_{0\delta})$
are all uniformly bounded (from below and above) with respect to $\delta$.
In particular, we have $\kappa_\delta< 1$ and $\min\{-|\overline{\varphi}_0|-m_1,m_2-|\overline{\varphi}_0|\}<\eta_\delta< 1$, for all $\delta$.
Hence, the constants $K(\overline{\varphi}_{0\delta})$ are bounded by a constant 
which depends only on $\overline{\varphi}_0$ (and on $F$, $J$ and $\Omega$).

By combining \eqref{Ken1} (which holds also with $H$ in place of $F$) with \eqref{Ken2} we deduce the desired bound
\begin{align*}
&\big\Vert F'(\varphi_\delta)\big\Vert_{L^2(0,T;L^1(\Omega))}\leq
L\big(\overline{\varphi}_0,\mathcal{E}(\uvec_0,\varphi_0),\Vert\hvec\Vert_{L^2(0,T;V_{div}')}\big),
\end{align*}
and this provides the control $\Vert\overline{\mu}_\delta\Vert_{L^2(0,T)}\leq L$, as well as the control
\begin{align}
&\Vert\mu_\delta\Vert_{L^2(0,T;V)}\leq C,\label{bd5delta}
\end{align}
with a constant $C$ now depending also on $\overline{\varphi}_0$, which is
derived by using \eqref{bd3delta} and Poincar\'{e}-Wirtinger inequality.

Let us now deduce an estimate for the sequence of time derivatives
$(\rho(\varphi_\delta)\uvec_\delta)_t$. Recalling the weak formulation of \eqref{Pb11}, for every $\wvec\in D(A^{3/2})$ we have
\begin{align}
&\big\langle(\rho(\varphi_\delta)\uvec_\delta)_t,\wvec\big\rangle_{D(A^{3/2})}
=
\big(\rho(\varphi_\delta)\uvec_\delta\otimes \uvec_\delta,D\wvec\big)
-2\big(\nu(\phid)D\uvec_\delta,D\wvec\big)-\delta(A^{3/2}\uvec_\delta,A^{3/2}\wvec)\nonumber\\
&
+\int_\Omega \uvec_\delta\cdot (\Jvect_\delta\cdot\nabla)\wvec
-(\varphi_\delta\nabla\mu_\delta,\wvec)+\langle\hvec,\wvec\rangle_{V_{div}},\label{varfor1}
\end{align}
where
\begin{align*}
&\Jvect_\delta=-\beta m(\varphi_\delta)\nabla\mu_\delta.
\end{align*}
From \eqref{varfor1}, on account of \eqref{bd1delta}, \eqref{bd3delta} and of the bound
$\delta^{1/2}\Vert\ud\Vert_{L^2(0,T;D(A^{3/2}))}\leq C$ which comes from
the energy inequality \eqref{enindelta}, it is easy to deduce
\begin{align}
&\Vert(\rho(\varphi_\delta)\uvec_\delta)_t\Vert_{L^2(0,T;D(A^{3/2})')}\leq C.\label{bd6delta}
\end{align}
The uniform in $\delta$ bounds \eqref{bd1delta}, \eqref{bd2delta} and
\eqref{bd5delta} imply the existence of $\uvec$, $\varphi$ and $\mu$ such that up to a subsequence we have
\begin{align}
&\uvec_\delta\rightharpoonup \uvec,\qquad\mbox{weakly$^\ast$ in } L^\infty(0,T;G_{div}),\label{weak1delta}\\
&\uvec_\delta\rightharpoonup \uvec,\qquad\mbox{weakly in } L^2(0,T;V_{div}),\label{weak2delta}\\
&\varphi_\delta\rightharpoonup \varphi,\qquad\mbox{weakly$^\ast$ in } L^\infty(0,T; L^p(\Omega)),\label{weak3delta}\\
&\varphi_\delta\rightharpoonup \varphi,\qquad\mbox{weakly in } L^2(0,T;V),\label{weak4delta}\\
&\mu_\delta\rightharpoonup \mu,\qquad\mbox{weakly in } L^2(0,T;V).\label{weak5delta}
\end{align}

Now, we have
$$\nabla(\rho(\varphi_\delta)\uvec_\delta)=\rho(\varphi_\delta)\nabla\uvec_\delta
+\rho'(\varphi_\delta)\nabla\varphi_\delta\cdot\uvec_\delta,$$
and since $\uvec_\delta$ is bounded in $L^\infty(0,T;L^2(\Omega)^3)\cap L^2(0,T;L^6(\Omega)^3)$
and hence also in $L^{10/3}(Q)^3$, then we see that
$\rho'(\varphi_\delta)\nabla\varphi_\delta\cdot\uvec_\delta$ is bounded in $L^{5/4}(Q)$ which implies the bound
\begin{align}
&\Vert\rho(\varphi_\delta)\uvec_\delta\Vert_{L^{5/4}(0,T;W^{1,5/4}(\Omega)^3)}\leq C.\label{bd7delta}
\end{align}
By exploiting the compact embedding $W^{1,5/4}(\Omega)^3\hookrightarrow\hookrightarrow L^{15/7-\gamma}(\Omega)^3$
 and the fact that we have $L^{15/7-\gamma}(\Omega)^3\hookrightarrow D(A^{3/2})'$,
since $D(A^{3/2})\hookrightarrow L^{15/8+\gamma '}(\Omega)^3$ (here $\gamma,\gamma'>0$),
by Aubin-Lions lemma from \eqref{bd6delta} and \eqref{bd7delta} we infer that
\begin{align}
&\rho(\varphi_\delta)\uvec_\delta\to\overline{\rho(\varphi)\uvec},\qquad\mbox{strongly in } L^{5/4}(0,T;L^{15/7-\gamma}(\Omega)^3),\qquad
\gamma>0.
\label{strong1delta}
\end{align}
Now, observe that \eqref{bd8delta}
and \eqref{bd2delta} imply that up to a subsequence we have
\begin{align}
&\varphi_\delta\to\varphi,\qquad\mbox{strongly in } L^2(Q),\mbox{ and pointwise a.e. in }Q,\label{strong2delta}\\
&\rho(\phid)\to\rho(\varphi),\qquad\mbox{strongly in } L^q(Q),\quad 2\leq\forall q<\infty.
\end{align}
Hence, since $\uvec_\delta\rightharpoonup\uvec$ weakly in $L^{10/3}(Q)^3$, then we have
$\rho(\varphi_\delta)\uvec_\delta\rightharpoonup\rho(\varphi)\uvec$ weakly in $L^{10/3-\gamma}(Q)^3$
($\gamma>0$)
and by comparison with \eqref{strong1delta} we get
$\overline{\rho(\varphi)\uvec}=\rho(\varphi)\uvec$.
Hence, from \eqref{bd6delta} and \eqref{bd8delta} we have
\begin{align}
&(\rho(\varphi_\delta)\uvec_\delta)_t\rightharpoonup(\rho(\varphi)\uvec)_t,\qquad\mbox{weakly in }L^2(0,T;D(A^{3/2})'),\\
&\varphi_\delta'\rightharpoonup\varphi_t,\qquad\mbox{weakly in }L^2(0,T;V').
\end{align}
We are now ready to pass to the limit in the variational formulation of Problem P$_\delta$
 and therefore to prove that the original problem \eqref{Pb21}--\eqref{Pb27} admits a weak solution.
To this goal, observe that, due to the interpolation embedding
\begin{align*}
&L^\infty(0,T;G_{div})\cap L^2(0,T;V_{div})\hookrightarrow L^r(0,T;L^{6r/(3r-4)}(\Omega)^3),\qquad 2\leq r\leq \infty,
\end{align*}
we have the bound for the sequence of $\uvec_\delta$ in $L^5(0,T;L^{30/11}(\Omega)^3)$
and hence
\begin{align}
&\uvec_\delta\rightharpoonup\uvec,\qquad\mbox{weakly in }L^{5}(0,T;L^{30/11}(\Omega)^3).\label{weak6delta}
\end{align}
The strong and weak convergences  \eqref{strong1delta} and \eqref{weak6delta}, respectively, are enough to pass to the limit
in the term $\mbox{div}(\rho(\varphi_\delta)\uvec_\delta\otimes\uvec_\delta)$ (cf. \eqref{varfor1}; recall that the test function
$\wvec\in D(A^{3/2})\hookrightarrow H^3(\Omega)^3$ and hence $D\wvec\in L^\infty(\Omega)^{3\times 3}$).

 Furthermore, by employing the pointwise convergence $\uvec_\delta\to\uvec$ a.e. in $Q$,
which follows from the strong convergence \eqref{strong1delta} and from
the pointwise convergence \eqref{strong2delta} for $\varphi_\delta$, we deduce the strong convergence
\begin{align}
&\uvec_\delta\to\uvec,\qquad\mbox{strongly in }L^2(0,T;G_{div})\label{strong3delta}
\end{align}
(actually the convergence of $\ud$ to $\uvec$ is strong also in $L^{10/3-\gamma}(Q)^3$).
On the other hand, since $m(\varphi_\delta)\to m(\varphi)$ strongly in $L^q(Q)$ for all $q<\infty$, by using
\eqref{weak5delta} and the bound $\Vert\Jvect_\delta\Vert_{L^2(Q)^2}\leq C$, we obtain
\begin{align}
&\Jvect_\delta\rightharpoonup\Jvect:=-\beta m(\varphi)\nabla\mu,\qquad\mbox{weakly in }L^2(Q)^3.
\label{weak7delta}
\end{align}
The strong and weak convergences \eqref{strong3delta} and \eqref{weak7delta} are now enough to pass to the limit
in the (variational formulation of the) term $\mbox{div}(\uvec_\delta\otimes\Jvect_\delta)$.

As far as the term $\delta A^3\uvec_\delta$ is concerned, we observe that from the energy inequality
\eqref{enindelta} we have the bound $\delta^{1/2}\Vert A^{3/2}\uvec_\delta\Vert_{L^2(0,T;L^2(\Omega)^3)}\leq C$
and therefore the contribution of this term in the variational formulation of P$_\delta$ vanishes as $\delta\to 0$.
Indeed, recalling the standard argument to pass to the limit in \eqref{varfor1} by
first multiplying it by a test function $\chi\in C^\infty_0(0,T)$ and then integrate the resulting
identity in time between 0 and $T$, we have,
for all $\wvec\in D(A^{3/2})$
\begin{align*}
&\delta\Big|\int_0^T(A^{3/2}\uvec_\delta,A^{3/2}\wvec)\chi(t)dt\Big|\leq \sqrt{\delta} C\Vert A^{3/2}\wvec\Vert\to 0,\qquad\mbox{as }\delta\to 0.
\end{align*}
Regarding the term $-\delta\Delta\varphi_\delta$, we see that also the contribution of this term vanishes as $\delta\to 0$.
Indeed,
from \eqref{enindelta} we have the bound $\delta^{1/2}\Vert\varphi_\delta\Vert_{L^\infty(0,T;V)}\leq C$, and therefore for every
test functions $\zeta\in V$ and $\chi\in C^\infty_0(0,T)$ we have
\begin{align}
&\delta\Big|\int_0^T(\nabla\varphi_\delta,\nabla\zeta)\chi(t)dt\Big|\leq\delta^{1/2}C\Vert\nabla\zeta\Vert\to 0,\qquad\mbox{as }\delta\to 0.
\label{passl}
\end{align}
Now, from the variational formulation of \eqref{Pb14} we have
\begin{align}
&\int_0^T(\mu_\delta,\zeta)\chi(t)dt=\int_0^T(a\varphi_\delta-J\ast\varphi_\delta+F'(\varphi_\delta),\zeta)\chi(t)dt
+\delta\int_0^T(\nabla\varphi_\delta,\nabla\zeta)\chi(t)dt,\label{wfchpotdelta}
\end{align}
for all $\zeta\in V$ and all $\chi\in C^\infty_0(0,T)$.
In order to pass to the limit in the term containing $F'(\phid)$ of \eqref{wfchpotdelta}
 we need to show that $|\varphi|<1$ a.e. in $Q$
(observe that up to now, from the pointwise convergence \eqref{strong2delta} and
the strict bound $|\phid|<1$ for all $\delta$ we only know that
$|\varphi|\leq 1$ a.e. in $Q$). To this purpose, we can employ
exactly the same argument as in \cite[Proof of Theorem 1]{FG2},
using the pointwise convergence $\phid\to\varphi$, the bound $\Vert F'(\phid)\Vert_{L^1(Q)}\leq C(\overline{\varphi}_0)$,
and assumption (A8).

Therefore, since $F'$ is continuous on $(-1,1)$, we have $F'(\varphi_\delta)\to F'(\varphi)$ pointwise in $Q$.
This pointwise convergence, together with the bound $\Vert F'(\phid)\Vert_{L^2(0,T;H)}\leq C$
(which follows from \eqref{Ken1} and from the bound
of $\overline{\mu}_\delta$ in $L^2(0,T)$)
yield, up to a subsequence
\begin{align}
&F'(\phid)\rightharpoonup F'(\varphi),\qquad\mbox{weakly in }L^2(0,T;H).\label{passlw}
\end{align}
 We can then pass to the limit as $\delta\to 0$ in \eqref{wfchpotdelta},
using \eqref{passl} and the weak convergences \eqref{passlw}, \eqref{weak5delta}
This shows that \eqref{Pb24} is satisfied by the limit functions $\varphi$ and $\mu$.


The argument for passing to the limit in the other terms of Problem P$_\delta$ is
straightforward and therefore, by letting $\delta\to 0$ from the weak formulation
of Problem P$_\delta$ we finally recover the weak formulation of the original problem \eqref{Pb21}--\eqref{Pb27}
with test functions $\wvec\in D(A^{3/2})$ and $\psi\in V$ for \eqref{Pb21} and \eqref{Pb23}, respectively.

We can now observe that, by density, the weak formulation of \eqref{Pb21} holds also
for every test function $\wvec\in D(A)$.
Moreover, for all $\wvec\in D(A)$ we have
\begin{align*}
\Big|\int_\Omega(\uvec\otimes\Jvect):D\wvec\Big|&\leq C\Vert\uvec\Vert_{L^3(\Omega)}\Vert\Jvect\Vert\Vert\wvec\Vert_{D(A)}
\leq C\Vert\uvec\Vert^{1/2}\Vert\uvec\Vert_{L^6(\Omega)^3}^{1/2}\Vert\Jvect\Vert\Vert\wvec\Vert_{D(A)},\nonumber\\
&\leq C\Vert\uvec\Vert^{1/2}\Vert\nabla\uvec\Vert^{1/2}\Vert\Jvect\Vert\Vert\wvec\Vert_{D(A)}
\end{align*}
whence we obtain
\begin{align*}
&\mbox{div}(\uvec\otimes\Jvect)\in L^{4/3}(0,T;D(A)').
\end{align*}
Indeed, we have
\begin{align*}
\Big|\int_\Omega(\rho\uvec\otimes\uvec):D\wvec\Big|&\leq C\Vert\uvec\Vert_{L^4(\Omega)^3}^2\Vert\wvec\Vert_{V_{div}}
\leq C\Vert\uvec\Vert^{1/2}\Vert\uvec\Vert_{L^6(\Omega)^3}^{3/2}\Vert\wvec\Vert_{V_{div}}\nonumber\\
&\leq C\Vert\uvec\Vert^{1/2}\Vert\nabla\uvec\Vert^{3/2}\Vert\wvec\Vert_{V_{div}},
\end{align*}
for all $\wvec\in V_{div}$, which entails
\begin{align*}
&\mbox{div}(\rho\uvec\otimes\uvec)\in L^{4/3}(0,T;V_{div}').
\end{align*}
It is also immediate to check that the terms $-2\mbox{div}\big(\nu(\varphi)D\uvec\big)$ and $\mu\nabla\varphi$ in \eqref{Pb21} are in
$L^2(0,T;V_{div}')$. Hence, \eqref{weakfor1} is satisfied for all $\wvec\in D(A)$ and we have
\begin{align*}
&(\rho\uvec)_t\in L^{4/3}(0,T;D(A)').
\end{align*}

Let us now prove the weak continuity $\uvec\in C_w([0,T];G_{div})$. To this purpose note that
from \eqref{bd1delta} and \eqref{bd6delta} we have the boundedness of
\begin{align}
&\rho(\phid)\ud\quad\mbox{ in } L^\infty(0,T;L^2(\Omega)^3)\quad\mbox{and}\nonumber\\
&\rho(\phid)\ud\quad\mbox{  in } H^1(0,T;D(A^{3/2})')\hookrightarrow C([0,T];D(A^{3/2})').\nonumber
\end{align}
Therefore, Lemma \ref{Strauss} and \eqref{strong1delta} imply that $\rho(\varphi)\uvec\in C_w([0,T];L^2(\Omega)^3)$
and this leads to the weak continuity of $\uvec$ in $G_{div}$ as in Step I, on account of the strong continuity
$\varphi\in C([0,T];H)$.

The argument to prove that $\uvec$ and $\varphi$ attain the initial values $\uvec_0$ and $\varphi_0$, respectively,
follows the same lines as in Step I and we omit it.

Finally we prove the energy inequality \eqref{eninPbor}.
First, we know from Step II that $[\ud,\phid]$ satisfies \eqref{eninomega}, namely
\begin{align}
&\Big(\int_\Omega\frac{1}{2}\rho(\varphi_{0\delta})\uvec_{0}^2+E(\varphi_{0\delta})+\frac{\delta}{2}\Vert\nabla\varphi_{0\delta}\Vert^2\Big)\omega(0)+
\int_0^T\Big(\int_\Omega\frac{1}{2}\rho(\phid)\ud^2+E(\phid)+\frac{\delta}{2}\Vert\nabla\phid\Vert^2\Big)\omega'(\tau)d\tau\nonumber\\
&\geq\int_0^T\Big(2\Vert\sqrt{\nu(\phid)}D\ud\Vert^2+\delta\Vert A^{3/2}\ud\Vert^2
+\Vert\sqrt{m(\phid)}\nabla\mud\Vert^2\Big)\omega(\tau)d\tau-\int_0^T\langle\hvec,\ud\rangle_{V_{div}}
\omega(\tau)d\tau,
\label{eninomegadelta}
\end{align}
for every $\omega\in W^{1,1}(0,T)$, with $\omega(T)=0$ and $\omega\geq 0$.
Again, to pass to the limit in \eqref{eninomegadelta} we need to show that
\begin{align}
&\int_0^T\omega'(\tau)d\tau\int_\Omega F(\phid)\to\int_0^T\omega'(\tau)d\tau\int_\Omega F(\varphi).\label{claim3}
\end{align}
This convergence can be established by the same argument as at the end of Step II, by exploiting the
fact that $F$ is a quadratic perturbation of a convex function and by using the bound of $F'(\phid)$ in
$L^2(0,T;H)$. Note that if $F$ is bounded, then \eqref{claim3} follows at once by directly
applying Lebesgue's theorem.
Concerning the other terms in \eqref{eninomegadelta}, we have
\begin{align}
&\int_0^T\omega'(\tau)d\tau\int_\Omega\frac{1}{2}\rho(\phid)\ud^2\to\int_0^T\omega'(\tau)d\tau\int_\Omega\frac{1}{2}\rho(\varphi)\uvec^2,
\label{claim4}
\end{align}
thanks to the strong convergences $\ud\to\uvec$ in $L^3(Q)^3$
(which follows from the bound of $\ud$ in $L^{10/3}(Q)^3$ and pointwise convergence)
 and $\rho(\phid)\to\rho(\varphi)$ in $L^q(Q)$ for all $q<\infty$
(take $q=3$). Convergence \eqref{claim4} can also be justified by observing that
we have also $\int_\Omega\frac{1}{2}\rho(\phid)\ud^2\to\int_\Omega\frac{1}{2}\rho(\varphi)\uvec^2$ for almost
all $\tau\in(0,T)$ and by applying Lebesgue's theorem.

Furthermore, we have
\begin{align*}
&\int_\Omega\frac{1}{2}\rho(\varphi_{0\delta})\uvec_0^2\to\int_\Omega\frac{1}{2}\rho(\varphi_{0})\uvec_0^2,
\end{align*}
thanks to Lebesgue's theorem,
\begin{align*}
&\int_\Omega F(\varphi_{0\delta})\leq\int_\Omega F(\varphi_0)
+\frac{a_\infty}{2}\big(\Vert\varphi_{0}\Vert^2-\Vert\varphi_{0\delta}\Vert^2\big)
\to\int_\Omega F(\varphi_0),\qquad\mbox{as }\delta\to 0,
\end{align*}
thanks to \eqref{convexG} and to the second of \eqref{auxlem}, and
\begin{align*}
&\Big|\int_0^T\frac{\delta}{2}\Vert\nabla\phid\Vert^2\omega'(\tau)d\tau\Big|\leq C_\omega\Vert\phid\Vert_{L^2(0,T;V)}^2
\leq C\delta\to 0,\qquad\mbox{as }\delta\to 0,
\end{align*}
thanks to the bound \eqref{bd2delta}.
By also using the first of \eqref{auxlem} and weak lower semicontinuity of norms (the second term on the right
hand side of the inequality is simply neglected) we can now pass to the limit in \eqref{eninomegadelta}
and obtain the following integral inequality
\begin{align}
&\mathcal{E}(0)\omega(0)+
\int_0^T\mathcal{E}(\tau)\omega'(\tau)d\tau
\geq\int_0^T\mathcal{D}(\tau)d\tau,\label{auxineq}
\end{align}
satisfied for every $\omega\in W^{1,1}(0,T)$, with $\omega(T)=0$ and $\omega\geq 0$,
where the functions $\mathcal{E}$ and $\mathcal{D}$ are given by
\begin{align*}
&\mathcal{E}(t):=\int_\Omega\frac{1}{2}\rho(\varphi)\uvec^2+E(\varphi),
\qquad\mathcal{D}(t):=2\Vert\sqrt{\nu(\varphi)}D\uvec\Vert^2
+\Vert\sqrt{m(\varphi)}\nabla\mu\Vert^2-\langle\hvec,\uvec\rangle_{V_{div}},
\end{align*}
for all $t\in[0,T]$ and almost all $t\in(0,T)$, respectively (for simplicity of notation, we omit
the indication of time $t$ on the right hand side).
Let us check that $\mathcal{E}=\mathcal{E}(\uvec(\cdot),\varphi(\cdot)):[0,\infty)\to\mathbb{R}$ is lower semicontinuous.
Indeed, we know that $\rho(\varphi)\uvec\in C_w([0,T];L^2(\Omega)^3)$ and moreover we have
$\rho(\varphi)^{-1/2}\in C([0,T];H)$. Therefore $\sqrt{\rho(\varphi)}\uvec
=\rho(\varphi)^{-1/2}\rho(\varphi)\uvec\in C_w([0,T];L^2(\Omega)^3)$, on account of the boundedness
of $\rho$ and this proves the lower semicontinuity in time of the first term of $\mathcal{E}$.
The lower semicontinuity of $E(\varphi(\cdot)):[0,\infty)\to\mathbb{R}$
is a consequence of the fact that $F$ is a quadratic perturbation of a convex function (see \cite[Lemma 2]{FG1}).

The energy inequality \eqref{eninPbor} now follows by applying Lemma \ref{Abelslemma} to \eqref{auxineq}.

The proof of Theorem \ref{mainres} is now complete.

\hspace{160mm}$\Box$

\begin{oss}\label{2Dcase}
{\upshape \textbf{The two dimensional case.}
The proof of Theorem \ref{mainres} can be obviously carried out
in two space dimensions as well. In particular, the strong convergences
that can be obtained for the sequences of approximate solutions at each step
by using interpolation and Aubin-Lions lemma will generally hold in
stronger norms with respect to the 3D case. Nevertheless, after passing to the limit as $\delta\to 0$
the weak solution we get for system \eqref{Pbor1}--\eqref{Pbor8} still has
no more than the regularity given by \eqref{rewsol1}-\eqref{rewsol3}, \eqref{rewsol5} and
by the second of \eqref{rewsol4}. Only the regularity for $(\rho\uvec)_t$ improves a bit.
Indeed, by a comparison in the weak formulation \eqref{weakfor1} with test function
$\wvec\in D(A^{s/2})$, for $1<s\leq 2$, using interpolation and Gagliardo-Nirenberg
inequality in 2D it is not difficult to see that
\begin{align}
&(\rho\uvec)_t\in L^{2-\gamma}(0,T;D(A)')\cap L^{2/(3-r)}(0,T;D(A^{r/2})'),\nonumber
\end{align}
for every $0<\gamma\leq 1$ and every $1<r<2$. Notice that we cannot set $r=1$.
Indeed, in the weak formulation
\eqref{weakfor1} we cannot take the test function $\wvec$ in $V_{div}$
due to the extra-term $\mbox{div}(\uvec\otimes\Jvect)$ and to the
fact that $\uvec$ does not belong to $L^\infty(\Omega)^2$.
Therefore, even if the regularity for $(\rho\uvec)_t$
slightly improves in 2D, this is not enough to show
the validity of the energy identity. For the same motivations
also uniqueness of weak solutions in 2D is not known.

In conclusion, in order to get an improvement of the results in 2D concerning
uniqueness and validity of the energy identity, we first need
to prove a regularity result for system \eqref{Pbor1}--\eqref{Pbor8},
namely, to establish existence of solutions that are more regular than the
ones constructed in Theorem \ref{mainres} (a regularity assumption on the initial
data will then be required). This will be the subject of a future contribution.
}

\end{oss}

\vspace{5mm}

{\bf Acknowledgement.} I would like to thank Helmut Abels for a fruitful discussion concerning one point of the proof of
Lemma \ref{existence1}.  This work  was supported by the
 FP7-IDEAS-ERC-StG \#256872 (EntroPhase).
The author is a member of GNAMPA (Gruppo Nazionale per l'Analisi Matematica, la Probabilit\`a e le loro Applicazioni) of INdAM (Istituto Nazionale di Alta Mate-\\matica).

\end{document}